
\documentclass[final,leqno,onefignum,onetabnum]{siamltex1213}
\usepackage[linesnumbered,ruled,vlined]{algorithm2e}
\usepackage{amsfonts}
\usepackage{amsmath}
\usepackage{array}
\usepackage{cases}
\usepackage{color}

\newcommand{\vertiii}{{\vert\kern-0.25ex\vert\kern-0.25ex\vert}}
\newcommand{\lvertiii}{{\left\vert\kern-0.25ex\left\vert\kern-0.25ex\left\vert}}
\newcommand{\rvertiii}{{\right\vert\kern-0.25ex\right\vert\kern-0.25ex\right\vert}}
\newcommand{\vertIII}{{\Big\vert\kern-0.25ex\Big\vert\kern-0.25ex\Big\vert}}

\title{A conjugate gradient method for electronic structure
       calculations \thanks{This work was supported by the National Science
       Foundation of China under grant 9133202 and 11671389, the Funds for Creative Research Groups of
       China under grant 11321061,   the Key Research Program of Frontier Sciences of the Chinese Academy of Sciences under grant QYZDJ-SSW-SYS010.}}

\author{Xiaoying Dai\footnotemark[2], Zhuang Liu\footnotemark[2], Liwei Zhang\footnotemark[2],  and Aihui Zhou\footnotemark[2]}

\begin{document}

\maketitle

\renewcommand{\thefootnote}{\fnsymbol{footnote}}

\footnotetext[2]{LSEC, Institute of Computational Mathematics and Scientific/Engineering Computing,
Academy of Mathematics and Systems Science, Chinese Academy of Sciences,  Beijing 100190, China; and School of Mathematical Sciences,
University of Chinese Academy of Sciences, Beijing 100049, China. daixy, liuzhuang, zhanglw, azhou@lsec.cc.ac.cn}

\renewcommand{\thefootnote}{\arabic{footnote}}
\slugger{mms}{xxxx}{xx}{x}{x--x}

\begin{abstract}
In this paper, we study a conjugate gradient method for electronic structure calculations.
We propose a Hessian based step size strategy, which together with three orthogonality
approaches yields three algorithms for computing the ground state energy of atomic and
molecular systems. Under some mild assumptions, we prove that our algorithms
converge locally. It is shown by our numerical experiments that the conjugate gradient
method is efficient.
\end{abstract}

\begin{keywords} conjugate gradient method, density functional theory, electronic structure,  optimization.
\end{keywords}

\begin{AMS} 65K05, 65N25, 81Q05, 90C30\end{AMS}

\pagestyle{myheadings}
\thispagestyle{plain}

\section{Introduction}

Kohn-Sham density functional theory (DFT) \cite{HK,KS,PY} is widely
used in electronic structure calculations. It is often formulated as a nonlinear eigenvalue
problem or a direct minimization problem under orthogonality constraint \cite{dai-zhou15,Ma,saad10}.

 The nonlinear eigenvalue problem is usually solved by using the self consistent
field (SCF) iterations,  by which
the central computation in solving such nonlinear eigenvalue problems is the repeated solution of
some algebraic eigenvalue problems. However, the convergence of SCF is not
guaranteed, especially for large scale systems with small band gaps,  for which the performance of the SCF iterations
is unpredictable \cite{YMW, ZZWZ}. 

Therefore, people turn to investigate constrained minimization approaches for
the Kohn-Sham direct energy minimization models, see e.g. \cite{FMM, GLCY, SRNB, TOYJSH, UWYKL, WYLZ, WY, YMW,
ZZWZ} and references therein.  In  \cite{YMW}, the authors constructed the search direction
 from the subspace spanned by the current orbital
 approximations, the associated preconditioned gradient,
and the previous search direction. The optimal search direction
and step size are computed by solving a smaller scale nonlinear eigenvalue problem, which is
complicated for large systems. While the authors in \cite{WY, ZZWZ} applied gradient type methods to the
minimization problem, in which the gradient is chosen as the search direction.
It has been shown in \cite{ZZWZ} that the gradient type methods are quite efficient and can outperform
the SCF iterations on many practical systems.

 We should point out that there are also several works using some conjugate gradient (CG) methods
 to the electronic structure calculations, see e.g., \cite{PTAJ, SCPB, TePaAl}. However, in these works,
 their starting points are to solve the nonlinear eigenvalue problems other than
the minimization problems, the CG methods are used to find the subspaces by updating each orbital along
its corresponding conjugate gradient direction, and these orbitals are updated successively.
After all the orbitals are updated, a subspace diagonalization (Rayleigh-Ritz) procedure
is then carried out. As pointed out in \cite{YMW},  due to the ``band-by-band'' nature,
 the algorithms are not very efficient.

In this paper, following \cite{WY, ZZWZ}, we apply the similar idea to construct some
novel conjugate gradient method, where the search
direction is replaced by the conjugate gradient direction. We utilize the
so-called  WY \cite{WY} and QR strategies to keep the orthogonality of the Kohn-Sham orbitals,
which were also employed
 in \cite{ZZWZ}. In addition, we also apply the polar decomposition (PD) \cite{AbMaSe} based approach
 to do the orthogonalization.

We understand that an important issue in a conjugate gradient method is the choice of step size
. To set up the step size in our conjugate gradient method, we introduce a Hessian based
strategy, which is based on the local second order
Taylor expansion of the total energy functional. We  prove the convergence
 of the WY, QR, and PD based conjugate gradient algorithms, and  observe that
the QR based conjugate gradient  algorithm usually performs best in our numerical experiments.
Although we need some energy descent
property, which is widely used in many optimization algorithms, to prove the convergence of
the algorithms theoretically (see Theorem \ref{thm-conv}), our algorithms perform
well without using any backtracking procedure. We compare our algorithms with the algorithm OptM-QR
recently proposed in \cite{ZZWZ} which is a gradient type method with Barzilai-Borwein (BB) step sizes, and observe from our numerical experiments that our algorithms need less iterations and less computational time to obtain
the results with  same accuracy. 
In addition, it is shown by our numerical experiments that the algorithm OptM-QR is less stable
than our algorithms, which may be  caused by the nonmonotonic behavior of
the BB step size \cite{Dai2}.

We see that the standard Armijo and the exact line search strategies are applied to the
geometric CG method on matrix manifold (see Algorithm 13 of \cite{AbMaSe}).
However, the exact line search is not recommended to the Kohn-Sham total energy minimization problem due to its high computational cost.
Note that the Armijo line search method uses a fixed initial value and performs a backtracking procedure
to ensure some energy decent property. We believe that our Hessian based strategy is more efficient
in providing a good initial value for the step size and reducing the times of backtracking.
 In particular, there is no convergence
analysis for Algorithm 13 given in \cite{AbMaSe}. We point out that Smith \cite{Smith93, Smith94} has also proposed a CG algorithm on the Riemannian manifold, where the orthogonality was preserved by using the Riemannian exponential map, whose computational cost is larger than the WY, QR, and PD strategies.

We should mention that the convergence
of SCF iteration was proved in  \cite{LWWUY,LWWY,YGM}
under the assumptions that the gap between the occupied states and
unoccupied states is sufficiently large and the second-order derivatives of the exchange
correlation functional are uniformly bounded from above. Anyway, such investigations are
theoretically significant. 


The rest of this paper is organized as follows: in Section \ref{sec-pre}, we provide a brief
introduction to the Kohn-Sham model and the associated
Grassmann manifold. We then propose our conjugate gradient algorithms in
Section \ref{sec-alg} and prove the convergence of the three algorithms in Section
\ref{sec-cvg}. In Section \ref{sec-modalg}, we present some restarted versions of our algorithms.
We report several numerical experiments in Section \ref{sec-num}
that demonstrate the accuracy and efficiency of our algorithms. After that, we give some concluding
remarks in Section \ref{sec-cln}. Finally, we provide the proof of Lemma \ref{lema-uniq} in
Appendix \ref{app-proof} and present several numerical tests using different step sizes
and different calculation formulas for the Hessian in  Appendix \ref{app-numtest} that lead
to our recommendations.
\section{Preliminaries} \label{sec-pre}
\subsection{Kohn-Sham model}
By Kohn-Sham density functional theory, the ground state 
of a system consisting of $M$ nuclei of charges and $N$ electrons can be obtained by solving the
following constrained optimization problem
\begin{equation}\label{Emin}
\begin{split}
&\inf_{U = (u_1, \dots, u_N) \in (H^1(\mathbb{R}^3))^N} \ \ \ \ \ \ \ E(U) \\
s.t.  &\int_{\mathbb{R}^3} u_iu_j = \delta_{ij},  1\leq i,j\leq N,
\end{split}
\end{equation}
where the Kohn-Sham total energy $E(U)$ is defined by
\begin{eqnarray}\label{energy}
E(U)&=&\frac{1}{2} \int_{\mathbb{R}^3} \sum_{i=1}^N|\nabla u_i(r)|^2 dr +
\frac{1}{2}\int_{\mathbb{R}^3}\int_{\mathbb{R}^3}\frac{\rho(r)\rho(r')}{|r-r'|}drdr' \nonumber\\
&&+\int_{\mathbb{R}^3} \sum_{i=1}^N u_i(r)V_{ext}(r)u_i(r)dr+ \int_{\mathbb{R}^3}
\varepsilon_{xc}(\rho)(r)\rho(r)dr,
\end{eqnarray}
and $u_i\in H^1(\mathbb{R}^3), i = 1, \cdots, N$ are the  Kohn-Sham orbitals.
Here $\rho(r)=\sum \limits_{i=1}^N|u_i(r)|^2$ is the electronic density,
$V_{ext}(r)$ is the external potential generated by the nuclei: for full potential calculations,
$V_{ext}(r) = -\sum\limits_{I = 1}^{M}\frac{Z_I}{|r - R_I|}$, $Z_I$ and $R_I$ are the nuclei charge
and position of the $I$-th nuclei respectively; while for pseudo potential
approximations, the formula for the energy is still (2.2) but $V_{ext}u_i (r)$ is replaced by
\begin{equation*}
\sum \limits_{I=1}^M(V^I_{loc}u_i)(r)+(V^I_{nloc}u_i)(r),
\end{equation*}
where
$(V^I_{loc}u_i)(r)$ is the local part and $(V_{nloc}u_i)(r)$ is the nonlocal part, which usually have the following form
\begin{equation*}
(V^I_{nloc} u_i)(r)= \sum_l\int_{\mathbb{R}^3}\xi_l^{I}(r') u_i(r') dr'\xi_l^{I}(r),
\end{equation*}
with $\xi_l^{I}\in L^2(\mathbb{R}^3)$ \cite{Ma}.
For convenience, the following analysis formally focuses on the full potential case.
In fact, the results in this paper hold true for any kind of finite-dimensional functional
that satisfies the assumptions in Section 2.3, of course including the pseudo-potential case,
for which the energy functional has  higher regularity than that for the full potential case.
The $\varepsilon_{xc}(\rho)(r)$ in the forth
term is the exchange-correlation functional, describing the many-body effects of exchange and
correlation, which is not known explicitly, and some approximation (such as local density approximation (LDA),
generalized gradient approximation (GGA))
has to be used \cite{Ma}.

\subsection{Gradient and Hessian on Grassmann manifold}
\newtheorem{assume}[theorem]{Assumption}

We first introduce some notation.
Let $\Psi = (\psi_1,\cdots,\psi_N)\in (L^2(\mathbb{R}^3))^N$, $\Phi = (\phi_1,\cdots,\phi_N)
\in (L^2(\mathbb{R}^3))^N$.
 Define
\begin{equation*}
 \Psi^T\Phi  = (\langle\psi_i,\phi_j\rangle)_{i,j=1}^{N}\in
\mathbb{R}^{N\times N},
\end{equation*}
where $\langle\psi_i,\phi_j\rangle = \displaystyle \int_{\mathbb{R}^3} \psi_i(r)\phi_j(r) dr$
is the usual $L^2$ inner product in $L^2(\mathbb{R}^3)$.
For $\Psi = (\psi_1,\cdots,\psi_N)\in
(L^2(\mathbb{R}^3))^N$, we define its norm as
\begin{equation}\label{norm-o}
\vertiii \Psi \vertiii = \left(\text{tr} (\Psi^T\Psi)\right)^{\frac{1}{2}}.
\end{equation}
For a matrix $A=(a_{ij})_{i, j=1}^N \in \mathbb{R}^{N\times N}$, the Frobenius norm  is defined as $\|A\|_F = \left(\sum\limits_{i, j = 1}^N |a_{ij}|^2\right)^{1/2}$, and the 2-norm is defined as $\|A\|_2 = \sigma_1$, where $\sigma_1$ is the largest singular value of $A$.

Now we introduce two lemmas that will be used in our analysis without proof.
The proof of Lemma \ref{lemma111} can refer to \cite{GoLo,MAT.ANAL} and   Lemma \ref{lemma222} can be obtained by a standard analysis.
For convenience, we denote $\mathcal{O}^{N\times N}$ the set of orthogonal matrices of order $N$. 
\begin{lemma}\label{lemma111}
(1) The Frobenius norm of $A \in \mathbb{R}^{N\times N}$ is orthogonal invariant, that is, if
$P, Q \in \mathcal{O}^{N\times N}$,  then
\begin{equation}\label{ort-invar}
  \|PAQ\|_F = \|A\|_F.
\end{equation}
(2) Suppose $A\in \mathbb{R}^{N\times N}$ is symmetric, if there exists
$P \in \mathbb{R}^{N\times N}$ reversible such that $B = P^{-1}AP$ is also symmetric, then
\begin{equation}\label{similar}
\|B\|_F= \|P^{-1}AP\|_F = \|A\|_F.
\end{equation}
(3) Suppose $A,B\in \mathbb{R}^{N\times N}$, then
\begin{equation}\label{normF-conc3}
  \|AB\|_F \le \|A\|_2\|B\|_F.
\end{equation}
\end{lemma}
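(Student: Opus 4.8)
The plan is to reduce all three parts to the trace characterization $\|A\|_F^2 = \mathrm{tr}(A^TA)$, the cyclic invariance of the trace, and --- for part~(2) only --- the spectral theorem for real symmetric matrices.

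For part~(1) I would write $\|PAQ\|_F^2 = \mathrm{tr}\bigl((PAQ)^T(PAQ)\bigr) = \mathrm{tr}(Q^T A^T P^T P A Q)$, use $P^TP = I$ since $P$ is orthogonal, and then cycle the trace so that $QQ^T = I$ cancel: $\mathrm{tr}(Q^T A^T A Q) = \mathrm{tr}(A^T A QQ^T) = \mathrm{tr}(A^TA) = \|A\|_F^2$. Taking square roots gives \eqref{ort-invar}.

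For part~(2) the key fact is that a real symmetric matrix is orthogonally diagonalizable. Writing $A = U\Lambda U^T$ and $B = VMV^T$ with $U,V \in \mathcal{O}^{N\times N}$ and $\Lambda,M$ diagonal, part~(1) already gives $\|A\|_F = \|\Lambda\|_F$ and $\|B\|_F = \|M\|_F$. Since $B = P^{-1}AP$, the matrices $A$ and $B$ are similar, hence share the same characteristic polynomial, so the diagonal entries of $M$ are a permutation of those of $\Lambda$; in particular $\|\Lambda\|_F = \|M\|_F$, and chaining the three equalities yields \eqref{similar}. Note that the specific $P$ enters only through the fact that it realizes a similarity. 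For part~(3) I would argue columnwise: writing $B = (b_1,\dots,b_N)$ by its columns, $AB = (Ab_1,\dots,Ab_N)$, so $\|AB\|_F^2 = \sum_{j=1}^N \|Ab_j\|_2^2 \le \sum_{j=1}^N \|A\|_2^2 \|b_j\|_2^2 = \|A\|_2^2 \|B\|_F^2$, where the middle inequality uses that $\|A\|_2$, the largest singular value, is the operator norm of $A$ on $(\mathbb{R}^N,\|\cdot\|_2)$. Taking square roots gives \eqref{normF-conc3}.

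I do not expect a genuine obstacle here; the only step requiring an external input is part~(2), which needs the spectral theorem together with the similarity-invariance of the spectrum --- precisely why the paper cites \cite{GoLo,MAT.ANAL} rather than reproving it. The one mild care point is to make sure the diagonalizing matrices are orthogonal (not merely invertible), so that part~(1) applies to them verbatim.
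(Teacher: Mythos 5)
Your proof is correct in all three parts. The paper itself states Lemma \ref{lemma111} without proof, deferring to \cite{GoLo,MAT.ANAL}, so there is no in-paper argument to compare against; your trace computation for (1), the spectral-theorem-plus-similarity argument for (2), and the columnwise estimate for (3) are precisely the standard arguments those references supply. One small remark: part (2) admits an even shorter route that avoids the spectral theorem entirely --- since $A$ and $B$ are both symmetric, $\|B\|_F^2=\mathrm{tr}(B^TB)=\mathrm{tr}(B^2)=\mathrm{tr}\bigl(P^{-1}A^2P\bigr)=\mathrm{tr}(A^2)=\mathrm{tr}(A^TA)=\|A\|_F^2$ by cyclicity of the trace, which sidesteps the need to diagonalize and to match eigenvalue multisets, though your version is equally valid.
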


%


%

\begin{lemma}\label{lemma222}
Let $\Psi = (\psi_1,\cdots,\psi_N)\in (L^2(\mathbb{R}^3))^N$, $\Phi = (\phi_1,\cdots,\phi_N)
\in (L^2(\mathbb{R}^3))^N$, and matrix $A\in \mathbb{R}^{N\times N}$. There
hold
\begin{eqnarray}
\|  \Psi^T\Phi  \|_F \le \vertiii \Psi \vertiii \vertiii \Phi \vertiii, \label{norm-oo} \\
\vertiii \Psi A \vertiii \le \vertiii \Psi \vertiii\|A\|_F\label{norm-om},
\end{eqnarray}
which mean that the norms for orbitals and matrices are compatible.
In further, if $  \Phi^T\Phi   =I_N$, then
\begin{eqnarray} \label{norm-conc3}
\vertiii\Phi A\vertiii=||A||_F.
\end{eqnarray}
\end{lemma}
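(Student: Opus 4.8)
The plan is to reduce all three claims to elementary identities for the Gram-type matrices $\Psi^T\Phi$ and $\Psi^T\Psi$, combined with the Cauchy--Schwarz inequality in $L^2(\mathbb{R}^3)$ and the definition \eqref{norm-o} of $\vertiii\cdot\vertiii$.

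For \eqref{norm-oo}, I would write out the Frobenius norm entrywise: $\|\Psi^T\Phi\|_F^2 = \sum_{i,j=1}^N \langle\psi_i,\phi_j\rangle^2$. Applying Cauchy--Schwarz to each term gives $\langle\psi_i,\phi_j\rangle^2 \le \|\psi_i\|_{L^2}^2\,\|\phi_j\|_{L^2}^2$, and the resulting double sum factors as $\big(\sum_{i}\|\psi_i\|_{L^2}^2\big)\big(\sum_{j}\|\phi_j\|_{L^2}^2\big) = \vertiii\Psi\vertiii^2\,\vertiii\Phi\vertiii^2$. Taking square roots yields \eqref{norm-oo}.

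For \eqref{norm-om}, I would use the column representation $(\Psi A)_{\cdot k} = \sum_{i=1}^N \psi_i A_{ik}$, so that $\vertiii\Psi A\vertiii^2 = \sum_{k=1}^N \big\|\sum_{i} \psi_i A_{ik}\big\|_{L^2}^2$. Bounding each column by the triangle inequality and then by the discrete Cauchy--Schwarz inequality gives $\big\|\sum_{i}\psi_i A_{ik}\big\|_{L^2}^2 \le \big(\sum_{i} \|\psi_i\|_{L^2}|A_{ik}|\big)^2 \le \big(\sum_{i}\|\psi_i\|_{L^2}^2\big)\big(\sum_{i} A_{ik}^2\big)$; summing over $k$ and using $\sum_k\sum_i A_{ik}^2 = \|A\|_F^2$ produces \eqref{norm-om}. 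Equivalently, one may write $\vertiii\Psi A\vertiii^2 = \text{tr}(A^T G A)$ with $G = \Psi^T\Psi$ symmetric positive semidefinite, note $\text{tr}(G) = \vertiii\Psi\vertiii^2$, and estimate $\text{tr}(A^T G A) = \text{tr}(G A A^T) \le \lambda_{\max}(G)\,\text{tr}(A A^T) \le \text{tr}(G)\,\|A\|_F^2$.

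Finally, \eqref{norm-conc3} is immediate from the same trace identity: when $\Phi^T\Phi = I_N$ we get $\vertiii\Phi A\vertiii^2 = \text{tr}(A^T\Phi^T\Phi A) = \text{tr}(A^TA) = \|A\|_F^2$, and since every step is an equality this gives \eqref{norm-conc3} with equality. None of the arguments presents a genuine obstacle; the only point requiring a little care is the chain of estimates for \eqref{norm-om}, where Cauchy--Schwarz must be invoked twice (first inside the $L^2$ norm to handle the finite sum of functions, then on the coefficient vector), or, in the trace version, the elementary fact that $\lambda_{\max}(G) \le \text{tr}(G)$ for positive semidefinite $G$.
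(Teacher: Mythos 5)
Your argument is correct in every step: the entrywise Cauchy--Schwarz computation for \eqref{norm-oo}, the double application of Cauchy--Schwarz (or equivalently the trace estimate $\text{tr}(GAA^T)\le\lambda_{\max}(G)\,\text{tr}(AA^T)\le\text{tr}(G)\,\|A\|_F^2$) for \eqref{norm-om}, and the trace identity for \eqref{norm-conc3} are all sound. The paper gives no proof of this lemma, stating only that it "can be obtained by a standard analysis," and what you have written is precisely that standard analysis, so there is nothing to compare against and nothing further to add.
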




The feasible set of constrained problem \eqref{Emin} is a Stiefel manifold, which
is defined as
\begin{equation}
\mathcal{M}^N = \{U
= (u_i)_{i=1}^N|u_i\in H^1(\mathbb{R}^3),   U^TU  = I_N\}.
\end{equation}

We  see from \eqref{energy} that $E(U) = E(UP)$ for any $P \in \mathcal{O}^{N\times N}$. To get rid of the non-uniqueness, we consider the problem on the Grassmann manifold,
which is the quotient of the Stiefel manifold and is defined as follows
\begin{equation*}
\mathcal{G}^N = \mathcal{M}^N/\sim.
\end{equation*}
Here, $\sim$ denotes the equivalence relation and is defined as: we say  $\hat{U} \sim U$,  if there exists $P \in \mathcal{O}^{N\times N}$, such that $\hat{U} = UP$. For any $U \in \mathcal{M}^N$, we denote the equivalence class  by $[U]$, that is,
\begin{equation*}
[U] = \{U P: P \in  \mathcal{O}^{N\times N}\}.
\end{equation*}
 Then the problem
\eqref{Emin} on the Grassmann manifold is
\begin{equation}\label{Emin-Grass}
\inf_{[U] \in \mathcal{G}^N} \ \ \ E(U),
\end{equation}
where $E(U)$ is defined by \eqref{energy}.

Now, we define the distance on the Grassmann manifold $\mathcal{G}^N$, which will be used
in our analysis. Let $[\Psi], [\Phi] \in \mathcal{G}^N$, with
 $\Psi = (\psi_1,\cdots,\psi_N)\in \mathcal{M}^N$, and
$\Phi = (\phi_1,\cdots,\phi_N)\in \mathcal{M}^N$, we define the distance between $[\Psi]$ and $[\Phi]$ on $\mathcal{G}^N$ by
\begin{equation}\label{dist}
\text{dist}([\Psi], [\Phi]) = \min_{P\in \mathcal{O}^{N\times N}} \vertiii \Psi - \Phi P \vertiii.
\end{equation}
The following result tells us how to get $P$ such that the right-hand side of \eqref{dist} achieves its minimum, which is in principle shown in \cite{EAS},
but without proof. For completeness, we present a proof here.
\begin{lemma}\label{lema-dist}
Let $ASB^T$ be the singular value decomposition(SVD) of the matrix
$  \Psi^T\Phi  $. Then $P_0 = BA^T$ minimizes the right-hand side of \eqref{dist}.
\end{lemma}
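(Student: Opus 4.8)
The plan is to turn the minimization in \eqref{dist} into a trace maximization that the SVD solves in closed form. First I would square the distance and expand: since $\vertiii \Psi - \Phi P \vertiii^2=\text{tr}\big((\Psi-\Phi P)^T(\Psi-\Phi P)\big)$ and $\Psi,\Phi\in\mathcal{M}^N$ give $\Psi^T\Psi=\Phi^T\Phi=I_N$ while $P\in\mathcal{O}^{N\times N}$ gives $P^TP=I_N$, one finds
\[
\vertiii \Psi - \Phi P \vertiii^2=\text{tr}(\Psi^T\Psi)-2\,\text{tr}(\Psi^T\Phi P)+\text{tr}(P^T\Phi^T\Phi P)=2N-2\,\text{tr}(\Psi^T\Phi P),
\]
where I used $\text{tr}(P^T\Phi^T\Psi)=\text{tr}\big((\Psi^T\Phi P)^T\big)=\text{tr}(\Psi^T\Phi P)$. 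Hence minimizing $\vertiii \Psi - \Phi P \vertiii$ over $\mathcal{O}^{N\times N}$ is equivalent to maximizing $\text{tr}(\Psi^T\Phi P)$ over $\mathcal{O}^{N\times N}$.

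Next, inserting the SVD $\Psi^T\Phi=ASB^T$ with $A,B\in\mathcal{O}^{N\times N}$ and $S=\text{diag}(\sigma_1,\dots,\sigma_N)$, $\sigma_i\ge 0$, and using the cyclic invariance of the trace, I would write $\text{tr}(\Psi^T\Phi P)=\text{tr}(SQ)$ with $Q:=B^TPA$. Since $Q$ is a product of orthogonal matrices it is orthogonal, so each of its columns has unit Euclidean norm and in particular $|Q_{ii}|\le 1$; therefore $\text{tr}(SQ)=\sum_{i=1}^N\sigma_iQ_{ii}\le\sum_{i=1}^N\sigma_i$. This bound is attained when $Q=I_N$, i.e. when $P=BA^T=:P_0$, which indeed satisfies $P_0^TP_0=AB^TBA^T=I_N$, so $P_0\in\mathcal{O}^{N\times N}$. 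This shows that $P_0$ achieves the minimum in \eqref{dist}.

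I do not expect any real obstacle here: the argument is elementary linear algebra. The only mildly delicate points are checking that the two cross terms in the expansion combine correctly (via $\text{tr}(X^T)=\text{tr}(X)$) and the estimate $|Q_{ii}|\le 1$ for an orthogonal $Q$. One should also note that when some $\sigma_i$ vanish the maximizer need not be unique, but the statement only asserts that $P_0=BA^T$ is a minimizer, so this does not affect the proof.
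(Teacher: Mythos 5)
Your proposal is correct and follows essentially the same route as the paper: expand the squared distance using the orthonormality of $\Psi$, $\Phi$, and $P$, insert the SVD, and reduce to maximizing $\mathrm{tr}(SC)$ over orthogonal $C=B^TPA$, which is maximized at $C=I_N$. In fact you supply the one detail the paper leaves as ``easy to verify,'' namely the bound $|C_{ii}|\le 1$ for orthogonal $C$ that forces the maximum at the identity.
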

\begin{proof}
For any $P \in \mathcal{O}^{N\times N}$, we derive from \eqref{norm-o} and
$\Psi, \Phi \in \mathcal{M}^N$ that
\begin{equation*}
\begin{split}
\vertiii \Psi - \Phi P \vertiii^2 & = 2N - \text{tr}((\Psi^T \Phi) P) -
                       \text{tr}(P^T(\Phi^T \Psi)) \\
                    & =2N - \text{tr}(ASB^TP) - \text{tr}(P^TBSA^T) \\
                    & =2N - \text{tr}(SC) - \text{tr}(C^TS),
\end{split}
\end{equation*}
where $C = B^TPA \in \mathcal{O}^{N\times N}$. It is easy to verify
that the minimum will achieve at $C = I_N$, which means that $P_0 = BA^T$ minimizes the right-hand side of \eqref{dist}.
\end{proof}

For $[U] \in \mathcal{G}^N$, the tangent space on the Grassmann manifold is defined as the following set \cite{SRNB}
\begin{equation}\label{tan}
\mathcal{T}_{[U]}\mathcal{G}^N = \{W\in (H^1(\mathbb{R}^3))^N|
W^TU   = 0 \in \mathbb{R}^{N\times N}\} =
(\text{span}\{u_1,\cdots,u_N\}^{\bot})^N.
\end{equation}

We understand that $E(U)$ is differentiable (as a functional in $(H^1(\mathbb{R}^3))^N$)
when the exact exchange-correction functional
is replaced by some approximation. In such a case, we denote $\nabla E(U)$ the gradient of $E(U)$.
We have that $\nabla E(U)=(E_{u_1},\cdots,E_{u_N})\in (H^{-1}(\mathbb{R}^3))^N$,
where $E_{u_i}$ is the derivative of $E(U)$ to the $i$-th orbital. It is easy to see that
\begin{equation} \label{der}
E_{u_i} = \mathcal{H}(\rho) u_i.
\end{equation}
Here,

\begin{equation*}
\mathcal{H}(\rho) = -\frac{1}{2}\Delta + V_{ext} + \int_{\mathbb{R}^3}\frac{\rho(r')}{|r-r'|}dr' +
v_{xc}(\rho)
\end{equation*}
is the Kohn-Sham Hamiltonian operator which is from $H^1(\mathbb{R}^3)$ to $H^{-1}(\mathbb{R}^3)$
with the exchange correlation potential $$v_{xc}(\rho)
= \frac{\delta(\rho\varepsilon_{xc}(\rho))}{\delta\rho}.$$
For convenience, we may view $\nabla E(U)$ as an element in $(H^{1}(\mathbb{R}^3))^N$ in our following analysis\footnote[1]{For any $F\in H^{-1}(\mathbb{R}^3)$, there exists a unique element $\tilde{F}\in H^1(\mathbb{R}^3)$ satisfying $$F(V)=\langle \tilde{F},V\rangle, \forall \ V\in H^{1}(\mathbb{R}^3).$$ For simplicity, we sometimes view $F$ as $\tilde{F}$ in this paper.}.

We see from \cite{EAS} that the gradient $\nabla_G E(U)$ at
$[U]$ on the Grassmann manifold $\mathcal{G}^N$ is 
\begin{equation*}
\begin{split}
    \nabla_G E(U) & = (I - UU^T)\nabla E(U) \\
    & = \nabla E(U) - U(U^T\nabla E(U)) ,
\end{split}
\end{equation*}
and therefore
\begin{equation}\label{gra}
\nabla_G E(U)= \nabla E(U) - U\Sigma = \mathcal{H}(\rho)U - U\Sigma,
\end{equation}
where $\Sigma = U^T\nabla E(U)= U^T(\mathcal{H}(\rho)U)$ is symmetric since
$\mathcal{H}(\rho)$ is a symmetric operator.

The Hessian of $E(U)$ on the Grassmann manifold is defined as \cite{EAS}
\begin{equation*}
\text{Hess}_GE(U)[V, W] = \text{tr} (V^T
E''(U) W) - \text{tr}( V^TW\Sigma), \forall \ W, V \in
\mathcal{T}_{[U]}\mathcal{G}^N,
\end{equation*}
where $(E''(U) W)_i= \sum\limits_j(E_{u_i})_{u_j}w_j$.
Therefore,
\begin{equation}\label{hes}
\begin{split}
\text{Hess}_GE(U)[V, W] &= \text{tr} ( V^T\mathcal{H}(\rho)
W) - \text{tr}( V^TW\Sigma) \\
&+ 2\int_{\mathbb{R}^3}\int_{\mathbb{R}^3}\frac{(\sum_iu_i(r)v_i(r))(\sum_ju_j(r')w_j(r'))}
{|r-r'|}drdr' \\
&+ 2\int_{\mathbb{R}^3}\frac{\delta^2(\varepsilon_{xc}(\rho)\rho)}{\delta\rho^2}(r)
(\sum_iu_i(r)v_i(r))(\sum_ju_j(r)w_j(r))dr
\end{split}
\end{equation}
provided that the approximated exchange-correlation functional is second order differentiable. The LDA, which satisfies the smoothness conditions when $\rho > 0$\footnote[2]{Although it is of physics, it is still open whether $\rho>0$ \cite{FHH}.}, is used in our implementation. We will see from the numerical results in  Appendix B that the last two terms in \eqref{hes} are small compared with the whole term,
 which means that they  can be neglected.
 Therefore,  we can use the approximate Hessian to replace the exact Hessian, that is,
%
\begin{equation}\label{hes-pra}
\text{Hess}_GE(U)[V, W] \approx \text{tr} ( V^T
\mathcal{H}(\rho)W) - \text{tr}( V^TW\Sigma).
\end{equation}

\subsection{The discretized Kohn-Sham model}
The finite dimensional discretizations for the Kohn-Sham model can be divided
into three classes: the plane wave method, the local basis set method, and the real
space method \cite{DGZZ, dai-zhou15}. Our approaches in this paper can be applied
to any given discretization method among  these three classes. For completeness,
we give a brief introduction of the discretized Kohn-Sham model here.

Let $\{\varphi_s\}_{s=1}^{N_g}$ be the basis for a finite dimensional space
$V_{N_g}\subset H^1(\mathbb{R}^3)$, where $N_g$ is the dimension of $V_{N_g}$. Then each discrete Kohn-Sham orbital
$u_i$ can be expressed as
\begin{equation*}
u_i(r) = \sum_{s = 1}^{N_g} c_{i,s} \varphi_s(r)
\end{equation*}
while  density $\rho(r) =\sum \limits_{i=1}^N\sum\limits_{s,t = 1}^{N_g}c_{i,s}c_{i,t}
\varphi_s(r)\varphi_t(r)$ and the Kohn-Sham total energy
\begin{align*}
E(U)&=\frac{1}{2}\sum_{i = 1}^N\sum_{s,t = 1}^{N_g}c_{i,s}c_{i,t}\int_{\mathbb{R}^3}
      \nabla\varphi_s(r)\nabla\varphi_t(r)dr \\
&  +  \frac{1}{2}\sum_{i = 1}^N\sum_{s,t = 1}^{N_g}c_{i,s}c_{i,t}\int_{\mathbb{R}^3}\int_{\mathbb{R}^3}
      \frac{\varphi_s(r)\varphi_t(r)\rho(r')}{|r-r'|}drdr' \\
& +\sum_{i = 1}^N\sum_{s,t = 1}^{N_g}c_{i,s}c_{i,t}\int_{\mathbb{R}^3} \varphi_s(r)V_{ext}(r)\varphi_t(r) dr \\
& +\sum_{i = 1}^N\sum_{s,t = 1}^{N_g}c_{i,s}c_{i,t}\int_{\mathbb{R}^3} \varepsilon_{xc}(\rho)(r)\varphi_s(r)
    \varphi_t(r)dr.
\end{align*}

If we use the finite difference discretization under a uniform grid, each
Kohn-Sham orbital $u_i$ can be represented as a vector of length $N_g$, where $N_g$ is
the degree of freedom for the computational domain, and we denote the set of all vectors
of length $N_g$ by $V_{N_g}$. Let $h_x, h_y, h_z$ be the mesh sizes
for the discretization in $x, y$ and $ z$ directions, respectively.
For simplicity, we also denote the discretized external potential (operator) by $V_{ext}$.
The Laplacian can be approximated by a matrix $L \in
\mathbb{R}^{N_g \times N_g}$ under a selected differential stencil. The density $\rho$ is
then a vector of length $N_g$, and $\rho =\displaystyle \sum_{i=1}^N u_i\odot u_i$, where $\odot$ is
the Hadamard product of two matrices (here for two vectors). The Hartree potential
$\displaystyle\int_{\mathbb{R}^3}\frac{\rho(r')}{|r-r'|}dr'$ can be represented by the product of
matrix $L^\dagger$ with $\rho$, where $L^\dagger$ is the generalized inverse of the discretized
Laplace operator. If we define the $L^2$ inner product of two vectors $\psi, \phi \in
V_{N_g}$ as follows
\begin{equation*}
\langle \psi, \phi \rangle = h_xh_yh_z\Big(\sum_{j=1}^{N_g} \psi(j)\phi(j)\Big),
\end{equation*}
then the discretized Kohn-Sham total energy can be expressed as
\begin{equation*}
E(U) = \frac{1}{2}\text{tr}( U^TLU) +\frac{1}{2} \langle \rho, L^\dagger
       \rho\rangle + \text{tr}( U^T V_{ext} U) + \langle \rho,
       \varepsilon_{xc}(\rho)\rangle.
\end{equation*}

Whichever discretization method we use, the minimization problem \eqref{Emin} becomes
\begin{equation}\label{dis-emin}
\begin{split}
&\min_{u_i\in V_{N_g}} \ \ \ E(U) \\
s.t.\ \  & \langle u_i, u_j \rangle = \delta_{ij}, \ \ 1\leq i,j\leq N,
\end{split}
\end{equation}
where $V_{N_g}$ is some $N_g$  dimensional space spanned by either some functions in $H^1(\mathbb{R}^{3})$ (for instance, resulted from the
finite element discretization) or some
vectors in $\mathbb{R}^{N_g}$ (for instance, resulted from the finite difference discretization).
We then introduce another  Stiefel manifold as
\begin{equation*}
\mathcal{M}^N_{N_g} = \{U = (u_i)_{i=1}^N|u_i\in V_{N_g},   U^TU  = I_N\},
\end{equation*}
and the corresponding  Grassmann manifold as  $\mathcal{G}^N_{N_g} = \mathcal{M}^N_{N_g}/\sim$,
where $\sim$ is the same as  that in the previous subsection. For $[U]\in
\mathcal{G}^N_{N_g}$,
the tangent space on the  Grassmann manifold $\mathcal{G}^N_{N_g}$ becomes
\begin{equation}\label{dis-tan}
\mathcal{T}_{[U]}\mathcal{G}^N_{N_g} = \{W\in (V_{N_g})^N|
W^TU   = 0\}.
\end{equation}
The gradient $\nabla_G E(U)$ and Hessian of $E(U)$ on the  Grassmann
manifold $\mathcal{G}^N_{N_g}$  have the same forms as that in \eqref{gra} and \eqref{hes} respectively.

Unless stated explicitly, the discussions in the rest of this paper are addressed for the
discretized model. For any element in $(V_{N_g})^N$, we define the corresponding norm $\vertiii \cdot \vertiii$
as \eqref{norm-o} with the inner product $\langle\cdot,\cdot\rangle$ in \eqref{norm-o} being replaced by the inner product of $V_{N_g}$.
It is easy to see that the conclusions in Lemma \ref{lemma222} are valid for elements in $(V_{N_g})^N$. 
For simplicity, we also refer to the conclusions in Lemma \ref{lemma222} in our following analysis.

Now, we introduce some assumptions that will be used in our analysis in Section \ref{sec-cvg}.
First, we need the following assumption.
\begin{assume}\label{assum_lip}
The gradient $\nabla E(U)$ of the energy functional is Lipschitz continuous. That is, there exists $L_0>0$
such that
\begin{equation*}
\vertiii \nabla E(U) - \nabla E(V) \vertiii \leq L_0\vertiii  U - V \vertiii, \ \ \forall \ U, V
\in \mathcal{M}^N_{N_g}.
\end{equation*}
\end{assume}

Note that the same assumption is used and discussed, for instance, in \cite{LWWUY, UWYKL}.

From Assumption \ref{assum_lip}, there is a constant $C_0>0$, such that
\begin{equation}\label{der-bound}
\vertiii \nabla E(\Psi) \vertiii \leq C_0, \ \ \forall \ \Psi \in \mathcal{M}^N_{N_g},
\end{equation}
which implies
\begin{equation} \label{lip}
\vertiii \nabla_G E(U) - \nabla_G E(V) \vertiii \leq L_1\vertiii U - V \vertiii, \ \ \forall \ U, V
\in \mathcal{M}^N_{N_g},
\end{equation}
where $L_1 = 2L_0 + 2\sqrt{N}C_0$. That is, the gradient of the energy functional on the
Grassmann manifold is $L_1$-Lipschitz continuous, too.

We assume that there exists
a local minimizer $[U^*]$ of \eqref{dis-emin}, on which the following assumption will be
imposed.
\begin{assume}\label{assum_hess}
There exists $\delta_1>0$,  such that
\begin{equation}\label{pos-bnd}
  \nu_1\vertiii D \vertiii^2 \le \textup{Hess}_GE(U)[D, D] \le
  \nu_2\vertiii D \vertiii^2 , \ \forall \ [U] \in B([U^*],\delta_1),
  \forall \ D \in \mathcal{T}_{[U]}\mathcal{G}^N_{N_g},
\end{equation}
where $[U^*]$ is a local minimizer of \eqref{dis-emin}, $\nu_1, \nu_2 > 0$ are constants, and
\begin{equation*}
B([U],\delta) := \{[V]\in \mathcal{G}_{N_g}^N: \textup{dist}([V],
  [U]) \le \delta\}.
\end{equation*}
\end{assume}

We see that the first inequality in \eqref{pos-bnd} is nothing but the coercivity assumption and
 has been introduced in \cite{SRNB} at the minimizer of \eqref{dis-emin}. Here we
 require that it is true in a neighbourhood of the minimizer; while we refer to
 Assumption 4.1 and Lemma 4.2 in \cite{WMUZ} for a discussion of
the second inequality in \eqref{pos-bnd}.
Due to Assumption \ref{assum_hess}, we will see that the convergence we obtain  in Section
\ref{sec-cvg} is local (c.f., also, Section \ref{sec-cln}). Besides, the uniqueness of the local minimizer is guaranteed by Assumption \ref{assum_lip} and Assumption \ref{assum_hess}. We describe it as the following lemma and refer to Appendix A for its proof.


\begin{lemma}\label{lema-uniq}
If Assumptions \ref{assum_lip} and \ref{assum_hess} hold true, then there exists
only one stationary point in $B([U^*], \delta_1)$, which is $[U^*]$.
Furthermore, if $[V_n]\in B([U^*], \delta_1)$ $(n=1,2,\cdots)$
satisfies $\lim\limits_{n\to \infty} E(V_n) = E(U^*)$, then
\begin{equation}\label{dist-conv}
  \lim_{n\to \infty} \textup{dist}([V_n], [U^*]) = 0.
\end{equation}
\end{lemma}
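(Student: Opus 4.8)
The plan is to prove the two assertions of Lemma \ref{lema-uniq} separately, with the uniqueness of the stationary point coming first and the distance convergence following from it together with the local convexity encoded in Assumption \ref{assum_hess}. For the uniqueness, suppose $[V]\in B([U^*],\delta_1)$ is a stationary point, i.e. $\nabla_G E(V)=0$. Since $B([U^*],\delta_1)$ is (geodesically, or at least in the sense of the distance \eqref{dist}) path-connected and Assumption \ref{assum_hess} gives $\mathrm{Hess}_G E(U)[D,D]\ge \nu_1\vertiii D\vertiii^2>0$ throughout the ball, the energy $E$ restricted to $B([U^*],\delta_1)$ is strictly geodesically convex. A strictly convex function on a convex set has at most one stationary point, hence $[V]=[U^*]$. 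To make this rigorous on the Grassmann manifold one would connect $[V]$ and $[U^*]$ by a path (for instance a minimizing geodesic, whose existence is where one uses that $\delta_1$ is small enough to stay inside the injectivity radius), parametrize it as $\gamma(t)$ with $\gamma(0)=[U^*]$, $\gamma(1)=[V]$, and consider $g(t)=E(\gamma(t))$. Then $g''(t)=\mathrm{Hess}_G E(\gamma(t))[\dot\gamma(t),\dot\gamma(t)]\ge \nu_1\vertiii\dot\gamma(t)\vertiii^2\ge 0$, and moreover $g''(t)=0$ would force $\dot\gamma\equiv 0$; since both endpoints are stationary we get $g'(0)=g'(1)=0$, which is incompatible with $g$ being strictly convex unless the path is trivial, i.e. $[V]=[U^*]$.

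For the second assertion, let $[V_n]\in B([U^*],\delta_1)$ with $E(V_n)\to E(U^*)$. Using the same geodesic $\gamma_n$ from $[U^*]$ to $[V_n]$ with length $\ell_n=\mathrm{dist}([V_n],[U^*])$ and unit-speed parametrization, a second-order Taylor expansion of $h_n(t)=E(\gamma_n(t))$ about $t=0$, together with $h_n'(0)=\langle \nabla_G E(U^*),\dot\gamma_n(0)\rangle=0$ (because $[U^*]$ is a stationary point) and $h_n''(t)\ge\nu_1$ from \eqref{pos-bnd}, yields
\begin{equation*}
E(V_n)-E(U^*) = h_n(\ell_n)-h_n(0) \ge \tfrac{1}{2}\nu_1\,\ell_n^2 = \tfrac{1}{2}\nu_1\,\mathrm{dist}([V_n],[U^*])^2.
\end{equation*}
Since the left-hand side tends to $0$, we conclude $\mathrm{dist}([V_n],[U^*])\to 0$, which is exactly \eqref{dist-conv}. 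One should note that the first inequality of \eqref{pos-bnd} is precisely what makes this quadratic-growth lower bound available; the Lipschitz Assumption \ref{assum_lip} and the bound \eqref{der-bound} are used to ensure $E$ and $\nabla_G E$ are well-behaved enough (e.g. $C^1$ with the Taylor remainder controlled) along the path.

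The main obstacle I expect is not the convexity argument itself but the geometric bookkeeping on the Grassmann manifold: one must justify that for $\delta_1$ sufficiently small, any two points of $B([U^*],\delta_1)$ are joined by a unique minimizing geodesic lying entirely inside the ball, so that the Hessian hypothesis \eqref{pos-bnd} applies all along it, and that the distance \eqref{dist} agrees (up to the relevant order) with the Riemannian/geodesic distance so that the length $\ell_n$ in the Taylor estimate really is $\mathrm{dist}([V_n],[U^*])$. An alternative that sidesteps the geodesic machinery is to work directly with representatives: pick $U^*\in[U^*]$ and, using Lemma \ref{lema-dist}, pick for each $n$ a representative $V_n$ aligned with $U^*$ so that $\vertiii V_n-U^*\vertiii=\mathrm{dist}([V_n],[U^*])$, then perform the Taylor expansion of $E$ along the straight segment $U^*+t(V_n-U^*)$ in the ambient space after projecting back to the manifold, absorbing the $O(\ell_n^2)$ curvature corrections into constants via Assumption \ref{assum_lip}; this is the route I would actually write out, since it keeps everything in terms of the already-defined norm $\vertiii\cdot\vertiii$ and the quantities in \eqref{pos-bnd}.
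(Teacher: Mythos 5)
Your proposal is correct in substance and, for the uniqueness claim, follows essentially the same route as the paper: both arguments join the two stationary points by a curve that stays inside $B([U^*],\delta_1)$ and use the positivity of the Hessian along that curve to reach a contradiction (the paper locates an interior maximizer of $E$ along the curve, where $\textup{Hess}_GE[\Gamma',\Gamma']\le 0$; you phrase it as strict convexity of $t\mapsto E(\gamma(t))$, which is the same mechanism). The one ingredient you defer --- a connecting curve $\Gamma(t)$ with $\Gamma(t)^T\Gamma'(t)=0$ that remains in the ball --- is exactly what the paper supplies as Lemma \ref{lema-geod}, built explicitly from the SVD of $\Psi^T\Phi$ (notably without assuming that matrix is invertible). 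For the second claim your route genuinely differs: you prove a quadratic-growth bound $E(V_n)-E(U^*)\ge c\,\textup{dist}([V_n],[U^*])^2$ by a second-order Taylor expansion along the geodesic, whereas the paper argues by contradiction, using Lemma \ref{lema-dist} to align representatives and the compactness of the finite-dimensional $\mathcal{M}^N_{N_g}$ to extract a convergent subsequence whose limit is a stationary point in the ball, hence $[U^*]$ by part one. Your version is stronger (it yields a rate, not just convergence) but needs the geometric bookkeeping you flag: that $\tfrac{d^2}{dt^2}E(\gamma(t))$ equals $\textup{Hess}_GE(\gamma(t))[\dot\gamma,\dot\gamma]$, which does hold for the curve of Lemma \ref{lema-geod} because $\Gamma''(t)=-\Gamma(t)\Theta^2$ makes the gradient correction collapse to the $-\textup{tr}(V^TW\Sigma)$ term already present in \eqref{hes}; and that the geodesic length dominates the chordal distance \eqref{dist}, which follows from $\theta^2\ge 2(1-\cos\theta)$ and is the direction of comparison you actually need (your identification of the length with $\textup{dist}$ is not exact, but harmless). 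The paper's compactness argument avoids all of this machinery at the price of being purely qualitative.
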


\section{A conjugate gradient method} \label{sec-alg}
\newtheorem{remark}[theorem]{Remark}

In general, there are two main issues in a line search based optimization method, one is a search
direction, the other is a step size. For the unconstrained conjugate gradient methods, the conjugate
gradient  direction is used as the search direction, which is a linear combination of the
negative gradient direction and the previous search direction. For the manifold constrained
optimization problem \eqref{dis-emin},
 we need to keep each iterative point
on the constrained manifold. Consequently, we have to introduce our orthogonality preserving strategies.

\subsection{Orthogonality preserving strategies}\label{subsec-ort}
Let $U\in \mathcal{M}^N_{N_g}$, $\tau \in \mathbb{R}$ and
$D \in \mathcal{T}_{[U]}\mathcal{G}^N_{N_g}$ be the step size and search direction, respectively.
Note that
$\widetilde{U}(\tau) = U + \tau D$ may not be on the Stiefel manifold $\mathcal{M}^N_{N_g}$. 
We resort to some orthogonalization strategies to deal with this problem. Here, we choose the following three strategies: the WY strategy, the QR strategy, and the PD strategy, since the QR and PD strategies are the well-known and commonly used orthogonalization strategies, while the WY strategy is a recently proposed strategy and is proved to be very efficient \cite{WY, ZZWZ}.
We will give some brief introduction to the three strategies.

We first see the WY strategy.
Note that WY strategy does not preserve the subspace spanned by the column vectors of $\widetilde{U}(\tau)$. First, we define
\begin{equation} \label{antisy}
\mathcal{W} = DU^T - UD^T.
\end{equation}
Since $D \in \mathcal{T}_{[U]}\mathcal{G}^N_{N_g}$, we have $\mathcal{W}U= D$.
We choose the next iterative point to be
\begin{equation}\label{up-wy}
U_{WY}(\tau) = U + \tau \mathcal{W}\Big(\frac{U + U_{WY}(\tau)}{2}\Big).
\end{equation}
Equation \eqref{up-wy} is an implicit definition of $U_{WY}(\tau)$.
Since $\mathcal{W}$ is antisymmetric, we have that the real parts of all the eigenvalues of $I - \frac{\tau}{2} \mathcal{W}$ are equal to 1.
As a result, $I - \frac{\tau}{2} \mathcal{W}$ is invertible.
Then we can rewrite $U_{WY}$ explicitly as
\begin{equation} \label{WYthrm}
U_{WY}(\tau) = \Big(I - \frac{\tau}{2} \mathcal{W}\Big)^{-1}\Big(I + \frac{\tau}{2}
\mathcal{W}\Big)U.
\end{equation}
We see that $  U_{WY}(\tau)^TU_{WY}(\tau)  = I_N$ and $U_{WY}'(0) = D$. The
formula \eqref{WYthrm} is not easy to implement due to the inversion of
$I - \frac{\tau}{2} \mathcal{W}$. Fortunately,  we have the following helpful property for
 $U_{WY}(\tau)$.
\begin{lemma}\label{lemma-wy}
$U_{WY}(\tau)$ has the low rank expression
\begin{equation} \label{WYcal}
U_{WY}(\tau) = U + \tau D\Big(I_N+\frac{\tau^2}{4}  D^TD  \Big)^{-1}-
\frac{\tau^2}{2}U\Big(I_N+\frac{\tau^2}{4}  D^TD  \Big)^{-1}  (D^TD)  .
\end{equation}
\end{lemma}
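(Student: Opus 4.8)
The plan is to \emph{verify} that the right-hand side of \eqref{WYcal} satisfies the defining identity for $U_{WY}(\tau)$, and then invoke the invertibility of $I-\frac{\tau}{2}\mathcal{W}$ to conclude. Denote by $Y$ the right-hand side of \eqref{WYcal} and set $G = \left(I_N + \frac{\tau^2}{4}D^TD\right)^{-1}$; this is well defined since $D^TD$ is symmetric positive semidefinite, so $I_N + \frac{\tau^2}{4}D^TD$ is symmetric positive definite, and $G$ commutes with $D^TD$ because $G$ is a function of $D^TD$. With this notation $Y = U + \tau DG - \frac{\tau^2}{2}U G (D^TD)$.

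First I would compute $U^TY$ and $D^TY$ using $U^TU = I_N$ and $U^TD = 0$ (the latter because $D\in\mathcal{T}_{[U]}\mathcal{G}^N_{N_g}$): this gives $U^TY = I_N - \frac{\tau^2}{2}G(D^TD)$ and $D^TY = \tau (D^TD)G$. Hence $\mathcal{W}Y = (DU^T - UD^T)Y = D(U^TY) - U(D^TY) = D - \frac{\tau^2}{2}DG(D^TD) - \tau U(D^TD)G$. Substituting into $\left(I-\frac{\tau}{2}\mathcal{W}\right)Y = Y - \frac{\tau}{2}\mathcal{W}Y$, the two terms involving $U(D^TD)G$ and $UG(D^TD)$ cancel by commutativity, and the terms involving $D$ combine as $\tau DG + \frac{\tau^3}{4}DG(D^TD) = \tau DG\left(I_N+\frac{\tau^2}{4}D^TD\right) = \tau D$. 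Collecting what remains yields $\left(I-\frac{\tau}{2}\mathcal{W}\right)Y = U + \frac{\tau}{2}D$. On the other hand, since $\mathcal{W}U = D$ we have $\left(I+\frac{\tau}{2}\mathcal{W}\right)U = U + \frac{\tau}{2}D$ as well, so $\left(I-\frac{\tau}{2}\mathcal{W}\right)Y = \left(I+\frac{\tau}{2}\mathcal{W}\right)U$.

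Finally, because $I - \frac{\tau}{2}\mathcal{W}$ is invertible (as observed just before \eqref{WYthrm}), this identity forces $Y = \left(I-\frac{\tau}{2}\mathcal{W}\right)^{-1}\left(I+\frac{\tau}{2}\mathcal{W}\right)U = U_{WY}(\tau)$, which is \eqref{WYcal}. An equivalent route, if one prefers a constructive derivation rather than a verification, is to start from $\left(I-\frac{\tau}{2}\mathcal{W}\right)(U_{WY}(\tau)-U) = \tau D$, observe that $\mathcal{W}$ maps its argument into the column span of the blocks $D$ and $U$, write $U_{WY}(\tau) - U = DA + UB$, and solve the resulting block system for $A,B$ using $U^TD = 0$ and $U^TU = I_N$; this reproduces $A = \tau G$ and $B = -\frac{\tau^2}{2}G(D^TD)$. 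Either way, the only point requiring care is the bookkeeping of non-commuting matrix products, together with the single fact that $D^TD$ and $G$ do commute; there is no genuine analytic obstacle.
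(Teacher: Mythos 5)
Your verification is correct: with $G=\bigl(I_N+\frac{\tau^2}{4}D^TD\bigr)^{-1}$ the computations $U^TY=I_N-\frac{\tau^2}{2}G(D^TD)$ and $D^TY=\tau(D^TD)G$ check out, the cancellation of the two $U$-terms does follow from the commutativity of $G$ with $D^TD$, the $D$-terms do collapse to $\frac{\tau}{2}D$, and the invertibility of $I-\frac{\tau}{2}\mathcal{W}$ (established just before \eqref{WYthrm}) legitimately upgrades the identity $\bigl(I-\frac{\tau}{2}\mathcal{W}\bigr)Y=\bigl(I+\frac{\tau}{2}\mathcal{W}\bigr)U$ to $Y=U_{WY}(\tau)$. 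However, your route is genuinely different from the paper's. The paper does not verify the formula a posteriori: it writes $\mathcal{W}=XY^T$ with $X=(D,U)$, $Y=(U,-D)$, invokes Lemma 4 of the Wen--Yin reference to obtain the Sherman--Morrison--Woodbury-type expression $U_{WY}(\tau)=U+\tau X\bigl(I_{2N}-\frac{\tau}{2}Y^TX\bigr)^{-1}Y^TU$, and then explicitly inverts the resulting $2\times 2$ block matrix using $U^TU=I_N$ and $U^TD=0$, which \emph{derives} \eqref{WYcal}. Your second, sketched alternative (solving the block system for $U_{WY}(\tau)-U=DA+UB$) is essentially a self-contained version of that constructive derivation. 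What your primary argument buys is independence from the external Lemma 4 and from any block-inversion bookkeeping --- it needs only the defining relation \eqref{WYthrm} and one commutativity fact --- at the cost of having to guess the formula first; what the paper's argument buys is that it produces the low-rank expression rather than merely confirming it, and it exhibits the structural reason (the rank-$2N$ factorization of $\mathcal{W}$) why such a small inverse suffices.
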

\begin{proof}
Let $X = (D, U)$, $Y = (U, -D)$. Then $\mathcal{W} = XY^T$.
We see from Lemma 4 in \cite{WY} that
\begin{eqnarray}\label{cal-wy}
U_{WY}(\tau) &=& U + \tau X\Big(I_{2N} - \frac{\tau}{2}  Y^TX  \Big)^{-1}  Y^TU .
\end{eqnarray}
In addition, since $U \in \mathcal{M}^N_{N_g}$ and
$D \in \mathcal{T}_{[U]}\mathcal{G}^N_{N_g}$, we have $  U^TU   = I_N$
and $  D^TU   =   U^TD   = 0$. Therefore, the matrix
$I_{2N} - \frac{\tau}{2}   Y^TX   $ is invertible and
\begin{eqnarray*}
&&\Big(I_{2N} - \frac{\tau}{2}   Y^TX  \Big)^{-1}=
\left(\begin{array}{cc}
 I_N & -\frac{\tau}{2} I_N \\
 \frac{\tau}{2}  D^TD   & I_N
\end{array}\right)^{-1}  \\
&=&
\left(\begin{array}{cc}
 (I_N + \frac{\tau^2}{4}  D^TD  )^{-1} &
 \frac{\tau}{2}(I_N + \frac{\tau^2}{4}  D^TD  )^{-1} \\
 -\frac{\tau}{2}(I_N + \frac{\tau^2}{4}  D^TD  )^{-1}  (D^TD)   &
 (I_N + \frac{\tau^2}{4}  D^TD  )^{-1}
\end{array}\right).
\end{eqnarray*}
Thus, we have
\begin{eqnarray*}
U_{WY}(\tau) = U + \tau D\Big(I_N+\frac{\tau^2}{4}  D^TD  \Big)^{-1}-
  \frac{\tau^2}{2}U\Big(I_N+\frac{\tau^2}{4}  D^TD  \Big)^{-1}  (D^TD)  .
\end{eqnarray*}
This completes the proof.
\end{proof}

We should point out that \eqref{WYcal} is more stable than \eqref{cal-wy} in
computation, since $I_N+\frac{\tau^2}{4}  D^TD  $ is symmetric positive definite,
its inversion is stable by using the Cholesky factorization.
Our numerical experiments also show that formula \eqref{WYcal} preserves the orthogonality
of the orbitals very well and there is no need to perform the reorthogonalization of the
orbitals in application.

Different from the WY strategy, the QR and PD strategies are to orthogonalize $\widetilde{U}(\tau) = U + \tau D$ directly.
First, we have the following lemma.
 \begin{lemma}\label{lemma-pos}
Let $\sigma\left(  \widetilde{U}(\tau)^T\widetilde{U}(\tau) \right)$ be the set of
eigenvalues of $ \widetilde{U}(\tau)^T\widetilde{U}(\tau) $. Then
\begin{equation}\label{est-eigen}
\sigma\left(  \widetilde{U}(\tau)^T\widetilde{U}(\tau) \right)
\subset[1,1+\tau^2\vertiii D \vertiii^2].
\end{equation}
\end{lemma}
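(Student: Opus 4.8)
The plan is to compute $\widetilde{U}(\tau)^T\widetilde{U}(\tau)$ explicitly and exploit the tangency condition to kill the cross terms. Since $U \in \mathcal{M}^N_{N_g}$ we have $U^TU = I_N$, and since $D \in \mathcal{T}_{[U]}\mathcal{G}^N_{N_g}$ we have $D^TU = U^TD = 0$ by \eqref{dis-tan}. Expanding,
\begin{equation*}
\widetilde{U}(\tau)^T\widetilde{U}(\tau) = (U + \tau D)^T(U + \tau D) = U^TU + \tau(U^TD + D^TU) + \tau^2 D^TD = I_N + \tau^2 D^TD,
\end{equation*}
so the problem reduces to locating the spectrum of $I_N + \tau^2 D^TD$.

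Next I would observe that $D^TD \in \mathbb{R}^{N\times N}$ is symmetric positive semidefinite, hence all its eigenvalues $\mu_1 \ge \cdots \ge \mu_N \ge 0$ are nonnegative; consequently the eigenvalues of $I_N + \tau^2 D^TD$ are exactly $1 + \tau^2\mu_i$, which already gives the lower bound $1$. For the upper bound, I would bound the largest eigenvalue of the positive semidefinite matrix $D^TD$ by its trace, $\mu_1 \le \sum_{i=1}^N \mu_i = \text{tr}(D^TD)$, and then invoke the definition \eqref{norm-o} of the orbital norm, $\text{tr}(D^TD) = \vertiii D \vertiii^2$. Combining, every eigenvalue $1 + \tau^2\mu_i$ lies in $[1, 1 + \tau^2\vertiii D \vertiii^2]$, which is \eqref{est-eigen}.

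There is essentially no hard step here: the only thing to be careful about is that $D$ is a tuple of orbitals rather than a matrix, so one must use the convention $D^TD = (\langle d_i, d_j\rangle)_{i,j=1}^N$ together with $\vertiii D \vertiii^2 = \text{tr}(D^TD)$ from \eqref{norm-o} to make the trace bound legitimate. The tangency condition $D^TU = 0$ is what makes the expansion collapse to $I_N + \tau^2 D^TD$ with no linear-in-$\tau$ term, and the positive semidefiniteness of $D^TD$ does the rest.
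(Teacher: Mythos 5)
Your proof is correct and follows essentially the same route as the paper: expand $\widetilde{U}(\tau)^T\widetilde{U}(\tau)=I_N+\tau^2 D^TD$ using the tangency condition, then use positive semidefiniteness of $D^TD$ for the lower bound and a bound on its largest eigenvalue for the upper bound. The only (immaterial) difference is that you bound $\lambda_{\max}(D^TD)$ by $\operatorname{tr}(D^TD)=\vertiii D\vertiii^2$, whereas the paper bounds it by $\|D^TD\|_F$ and then invokes \eqref{norm-oo}; both yield the same estimate.
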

\begin{proof}
Since $D \in \mathcal{T}_{[U]}\mathcal{G}^N_{N_g}$, we have
\begin{equation}\label{eq111}
  \widetilde{U}(\tau)^T\widetilde{U}(\tau)  = I_N
+ \tau^2  D^T D  .
\end{equation}
Note that $  D^T D  $ is symmetric semi-positive
definite, the  smallest eigenvalue of $ \widetilde{U}(\tau)^T\widetilde{U}(\tau) $ is larger than 1.
 On the other hand, the largest eigenvalue of
$  D^T D  $ is not larger than its Frobenius norm $\|  D^T D  \|_F$, we
get the second inequality by \eqref{norm-oo}.
\end{proof}

Due to \eqref{est-eigen}, the matrix $  \widetilde{U}(\tau)^T\widetilde{U}(\tau) $
is well conditioned under a suitable step size $\tau$, which means that it is easy to perform
the orthogonalization of the matrix.

We now see the  QR strategy, which
performs the orthogonalization by the QR factorization,
\begin{equation*}
\widetilde{U}(\tau) =  Q(\tau)R(\tau),
\end{equation*}
and then set $U_{QR}(\tau)$ to be the column-orthogonal orbitals $Q$, that is,
\begin{equation*}
U_{QR}(\tau) = Q(\tau)=\widetilde{U}(\tau)R(\tau)^{-1}.
\end{equation*}
In other words, $\widetilde{U}(\tau)=U_{QR}(\tau)R(\tau)$ and hence
\begin{equation*}
R(\tau)^TR(\tau)= \widetilde{U}(\tau)^T\widetilde{U}(\tau) =I+\tau^2  D^TD .
\end{equation*}
We can carry out the orthogonalization by the Cholesky factorization. Suppose lower triangular
matrix $L(\tau)$ with positive diagonal elements satisfies
\begin{equation}\label{def-L}
L(\tau)L(\tau)^T =   \widetilde{U}(\tau)^T\widetilde{U}(\tau) ,
\end{equation}
then we have $R(\tau)=L(\tau)^T$ and
\begin{equation}\label{Proj}
U_{QR}(\tau) =   \widetilde{U}(\tau)L(\tau)^{-T}.
\end{equation}

At last, we turn to see the PD strategy, which
performs the orthogonalization by
the polar decomposition. Since
$  \widetilde{U}(\tau)^T\widetilde{U}(\tau)$ is positive definite,
$\big(\widetilde{U}(\tau)^T\widetilde{U}(\tau)\big)^
{-\frac{1}{2}}$ is well defined, and
\begin{equation}\label{Polar}
U_{PD}(\tau) = \widetilde{U}(\tau)
\big(\widetilde{U}(\tau)^T\widetilde{U}(\tau)\big)^{-\frac{1}{2}}
=\widetilde{U}(\tau)\big(I_N + \tau^2  D^T D  \big)^{-\frac{1}{2}}.
\end{equation}
Here, $\big(\widetilde{U}(\tau)^T\widetilde{U}(\tau)\big)^{-\frac{1}{2}}$
can be calculated by the eigen-decomposition of
$  \widetilde{U}(\tau)^T\widetilde{U}(\tau) $. That is,
suppose $  \widetilde{U}(\tau)^T\widetilde{U}(\tau)  = P \Lambda P^T$,
where $P \in \mathcal{O}^{N\times N}$, $\Lambda$ is diagonal, we get
$\big(\widetilde{U}(\tau)^T\widetilde{U}(\tau)\big)^{-\frac{1}{2}}
= P \Lambda^{-\frac{1}{2}}P^T$.

It is easy to see that
\begin{equation} \label{dtau}
U'_{QR}(0) = D, \ U'_{PD}(0) = D.
\end{equation}

For convenience, we introduce a macro $\text{ortho}(U,D,\tau)$ to denote
 one step starting from point $U \in \mathcal{M}^N_{N_g}$ with search direction $D$ and step size $\tau$ to next  point, which is also in $\mathcal{M}^N_{N_g}$. For the three strategies introduced above, the definition for  $\text{ortho}(U,D,\tau)$ is as follows:

\begin{itemize}
      \item{for WY:}\begin{eqnarray*}
       \text{ortho}(U,D,\tau) &=& U + \tau D\Big(I_N+\frac{\tau^2}{4}
       D^TD  \Big)^{-1} \\
       && - \frac{\tau^2}{2}U\Big(I_N+\frac{\tau^2}{4}
         D^TD  \Big)^{-1}  (D^TD)  ;
      \end{eqnarray*}
      \item{for QR:}
      \begin{equation*}
          \text{ortho}(U,D,\tau) = (U + \tau D)L^{-T},
       \end{equation*}
       where $L$ is the lower triangular matrix such that
       \begin{equation*}
         LL^T = I_N + \tau^2
        D^T D   ;
       \end{equation*}
       \item{for PD:}\begin{equation*}
         \text{ortho}(U,D,\tau)=(U + \tau D)\big(I_N + \tau^2
        D^T D  \big)^{-\frac{1}{2}}.
      \end{equation*}\\
      \end{itemize}

\subsection{The step size strategy} \label{subsec-step}
Note that it is too expensive
to use the exact line search in electronic structure calculations, we introduce our step size strategy in this subsection, in which the Hessian
of the energy functional will be used.
Suppose we have $U_n\in \mathcal{M}^N_{N_g}$, step size $\tau \in \mathbb{R}$,
and search direction
$D_n \in \mathcal{T}_{[U_n]}\mathcal{G}^N_{N_g}$,
satisfying $\text{tr}( \nabla_G E(U_n)^TD_n ) \le 0$.
Expanding $E(U_n + \tau D_n)$ at $U_n$ approximately, we obtain
\begin{equation}\label{step-expan}
\begin{split}
E(U_n + \tau D_n) \approx\ & E(U_n) + \tau \text{tr}(
\nabla_G E(U_n)^TD_n) \\
 & + \frac{\tau^2}{2}\text{Hess}_GE(U_n)[D_n, D_n].
\end{split}
\end{equation}
To ensure the reliability of \eqref{step-expan},  we should do some restrictions to the step size $\tau_n$, for example, we may restrict the
step size $\tau_n$ to satisfy
$\tau_n\vertiii D_n \vertiii \le \theta$,
where $0 < \theta < 1$ is a given parameter. Note that the right hand
side of \eqref{step-expan} is a quadratic
function of $\tau$, we choose $\tilde{\tau}_n$ to be the minimizer of the quadratic function
in the interval $(0, \theta/\vertiii D_n\vertiii]$, which can be divided into two cases
based on whether the following condition is satisfied or not
\begin{equation}\label{pos-hess}
\text{Hess}_GE(U_n)[D_n, D_n]>0.
\end{equation}
A simple calculation shows that
\begin{equation}\label{stepsize}
\tilde{\tau}_n=
\begin{cases}
\min\left(-\frac{\text{tr}( \nabla_G E(U_n)^T D_n)}{\text{Hess}_GE
(U_n)[D_n, D_n]}, \frac{\theta}{\vertiii D_n \vertiii}\right), & \mbox{if
 \eqref{pos-hess} holds},\\
\frac{\theta}{\vertiii  D_n\vertiii}, & \mbox{otherwise}.
\end{cases}
\end{equation}

In our analysis, to ensure the energy reduction, we need the backtracking for the
step size, that is
\begin{equation}\label{step-back}
\tau_n = t^{m_n}\tilde{\tau}_n,
\end{equation}
where $t\in(0,1)$ is a given parameter, $m_n$ is the smallest nonnegative integer to satisfy
\begin{align}\label{back-cond}
  E(U_{n+1}(\tau_n)) \leq  \ E(U_n) + \eta\tau_n(\nabla_G E(U_n)^T
  D_n),
\end{align}
where $0 < \eta < 1$ is a constant parameter. We see that \eqref{back-cond} will be satisfied when
$\tau_n$ is sufficiently small, which implies the existence of such $m_n$.
In a word, we define our step size strategy, the Hessian based strategy, as follows

\begin{center}
\begin{tabular}{|p{115mm}|}\hline
 \begin{center}
  {\bf Hessian based strategy$(\theta, t, \eta)$} \vskip 0.2cm
 \end{center}
 \begin{enumerate}
   \item Choose $\tilde{\tau}_n$ by \eqref{stepsize}.
   \item Calculate the step size
     \begin{equation*}
       \tau_n = t^{m_n}\tilde{\tau}_n,
     \end{equation*}
     where $m_n \in \mathbb{N}$ is the smallest nonnegative integer to satisfy
     \eqref{back-cond}.
 \end{enumerate} \\
 \hline
\end{tabular}
\end{center}

\begin{remark}\label{mark-pos}
{\rm 
Note that \eqref{pos-hess} is the second order optimality condition,
for an algebraic  eigenvalue problem, we understand that \eqref{pos-hess} is satisfied if there is a
gap between the $N$-th and $(N+1)$-th eigenvalues \cite{SRNB}.
 Under Assumption \ref{assum_hess}, \eqref{pos-hess} is satisfied if
$[U_n] \in B([U^*], \delta_1)$, and we will show how to ensure this property
in Section \ref{sec-cvg}.
In application, 
the condition
$\text{Hess}_GE(U_n)[D_n, D_n] > 0$ is satisfied for all examples in Section \ref{sec-num}
for the CG algorithms with the Hessian \eqref{hes} or the approximate  Hessian \eqref{hes-pra}.}
\end{remark}

\subsection{The choice of conjugate gradient parameter}
As is well known, the conjugate gradient direction is a linear combination of the negative gradient direction
and the previous search direction. Therefore, another important issue is to decide $\beta$,
the coefficient of the previous search direction (see step 3 of our algorithms in Section \ref{subsec-alg}),
which we call conjugate gradient parameter here. For a pure quadratic objective function, $\beta$ is
fixed. However, for a non-quadratic objective function, there are many different options. Here, we list
some famous choices as follows:
\begin{equation*}
\beta_n = \frac{\vertiii \nabla_G E(U_n)\vertiii^2}{\vertiii \nabla_G E(U_{n-1})\vertiii^2},
\end{equation*}
\begin{equation*}
\beta_n = \frac{\text{tr}( (\nabla_G E(U_n) - \nabla_G E(
   U_{n-1}))^T\nabla_G E(U_n))}{\vertiii \nabla_G E(U_{n-1}) \vertiii^2},
\end{equation*}
\begin{equation*}
\beta_n = \frac{\text{tr}( (\nabla_G E(U_n) - \nabla_G E(U_{n-1}))^T
              \nabla_G E(U_n) )}{\text{tr}( {F_{n-1}}^T(\nabla_G E(U_n)
              - \nabla_G E(U_{n-1})) )},
\end{equation*}
\begin{equation*}
\beta_n = \frac{\vertiii \nabla_G E(U_n) \vertiii^2}{
              \text{tr}({F_{n-1}}^T(\nabla_G E(U_n) - \nabla_G E(U_{n-1})))
              },
\end{equation*}
where $F_{n-1}$ is the previous conjugate gradient direction (see the algorithms in
Section \ref{subsec-alg}).
They are called Fletcher-Reeves (FR) formula, Polak-Ribi\'ere-Polyak (PRP) formula,
Hestenes-Stiefel (HS) formula and Dai-Yuan (DY) formula \cite{DaY}, respectively.
We choose the PRP formula in this paper, that is
\begin{equation}\label{cgprp}
\beta_n = \frac{\text{tr}( (\nabla_G E(U_n) - \nabla_G E(
   U_{n-1}))^T\nabla_G E(U_n))}{\vertiii \nabla_G E(U_{n-1}) \vertiii^2}.
\end{equation}
In fact, we have tested some  other choices, and find that there is no obvious difference for
the performance of using different choices in our numerical experiments.
\subsection{The conjugate gradient algorithms}\label{subsec-alg}
Based on some orthogonality preserving strategy,
the Hessian based strategy, and \eqref{cgprp},
we propose our conjugate gradient algorithm as follows.
%
\renewcommand{\thealgocf}{} 
\begin{algorithm}
\caption{Conjugate gradient method}
 Given $\epsilon, \theta, t, \eta \in (0,1)$, 
  initial data $U_0, \ s.t. \   U_0^TU_0  = I_N$, $U_{-1}=U_0$, $F_{-1}= 0$,
  calculate the gradient $\nabla_G E(U_0)$, let $n = 0$\;
 \While{$\vertiii \nabla_G E(U_n) \vertiii> \epsilon$}{
  Calculate the conjugate gradient parameter $\beta_n$ by \eqref{cgprp},
      let $F_n = -\nabla_G E(U_n)+\beta_nF_{n-1}$\;
  Project the search direction to the tangent space of $U_n$:
      $D_n = F_n - U_n(U_n^TF_n)$\;
  Set $F_n = -F_n\mbox{sign}(\text{tr} (\nabla_G E(U_n)^TD_n
      ))$, \ $D_n = -D_n\text{sign}(\mbox{tr}(\nabla_G
      E(U_n)^TD_n ))$\;
  Calculate the step size $\tau_n$ by the {\bf Hessian based strategy$(\theta, t, \eta)$};

   Set $U_{n+1}=\text{ortho}(U_n,D_n,\tau_n)$;

  Let $n=n+1$, calculate the gradient $\nabla_G E(U_n)$\;
 }
\end{algorithm}
Step 5 of our conjugate gradient algorithm is to make sure that
\begin{equation}\label{cond1}
\text{tr} (\nabla_G E(U_n)^TD_n ) \le 0.
\end{equation}

By using the WY, QR or PD strategy in step 7, we obtain three different algorithms and denote them as CG-WY, CG-QR and CG-PD, respectively.

\section{Convergence analysis} \label{sec-cvg}

 In this section, we prove the convergence of our algorithms. First, we provide two estimations in Propositions \ref{diff} and \ref{der-diff}
 for our orthogonality preserving strategies introduced in Section \ref{subsec-ort}. 

\begin{proposition}\label{diff}
For the WY strategy \eqref{WYthrm}, QR strategy \eqref{Proj}, and PD strategy \eqref{Polar},
there exists a constant $C_1 > 0$, such that
\begin{equation}\label{prop4.1}
\vertiii U_*(\tau) - U \vertiii \le C_1\tau \vertiii  D \vertiii, \ \forall \
\tau>0,
\end{equation}
where $U_*(\tau)$ represents $U_{WY}(\tau)$, or $U_{QR}(\tau)$, or $U_{PD}(\tau)$. Here, $C_1$ can be chosen as $2$.
\end{proposition}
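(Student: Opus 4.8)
The plan is to run all three strategies through one common identity. Since $U_*(\tau)$ and $U$ both lie on the Stiefel manifold, $U_*(\tau)^T U_*(\tau) = U^T U = I_N$, so
\[
\vertiii U_*(\tau) - U \vertiii^2 = 2N - 2\,\text{tr}(U^T U_*(\tau)),
\]
and it suffices to bound $N - \text{tr}(U^T U_*(\tau))$ above by $\tfrac{\tau^2}{2}\vertiii D \vertiii^2$ in each case — which in fact yields the sharper constant $1$, hence a fortiori $C_1=2$. The key simplification is that $D \in \mathcal{T}_{[U]}\mathcal{G}^N_{N_g}$ forces $U^T D = 0$, so $U^T U_*(\tau)$ is explicitly computable for each strategy; all inverses occurring below are well defined because $D^T D$ is symmetric positive semidefinite (so $I_N + cD^TD \succeq I_N$), cf.\ Lemma \ref{lemma-pos}.

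For WY, substituting the low-rank formula \eqref{WYcal} of Lemma \ref{lemma-wy} and left-multiplying by $U^T$ collapses (using $U^TU=I_N$, $U^TD=0$) to $U^T U_{WY}(\tau) = I_N - \tfrac{\tau^2}{2} M (D^TD)$ with $M := (I_N + \tfrac{\tau^2}{4} D^TD)^{-1}$, hence $\vertiii U_{WY}(\tau) - U \vertiii^2 = \tau^2 \text{tr}(M D^TD)$; since $M$ is symmetric with $0 \preceq M \preceq I_N$ and $D^TD \succeq 0$, the cyclic property of the trace gives $\text{tr}(M D^TD) = \text{tr}((D^TD)^{1/2} M (D^TD)^{1/2}) \le \text{tr}(D^TD) = \vertiii D \vertiii^2$. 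For PD, \eqref{Polar} and $U^TD=0$ give $U^T U_{PD}(\tau) = (I_N + \tau^2 D^TD)^{-1/2}$, so with $\lambda_1,\dots,\lambda_N \ge 0$ the eigenvalues of $D^TD$ we have $\vertiii U_{PD}(\tau) - U \vertiii^2 = 2\sum_i (1 - (1+\tau^2\lambda_i)^{-1/2})$, and the elementary bound $1 - (1+x)^{-1/2} \le x/2$ for $x\ge0$ makes this $\le \tau^2 \sum_i \lambda_i = \tau^2 \vertiii D \vertiii^2$.

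The QR case is the one I expect to be the main obstacle, because the lower-triangular factor $L(\tau)$ of \eqref{def-L} is not diagonalized together with $D^TD$, so its diagonal entries cannot be bounded individually by $1$. From \eqref{Proj} and $U^TD=0$ one gets $U^T U_{QR}(\tau) = L(\tau)^{-T}$, hence $\vertiii U_{QR}(\tau) - U \vertiii^2 = 2(N - \text{tr}(L(\tau)^{-1}))$, and the trick is to estimate $\text{tr}(L(\tau)^{-1}) = \sum_i L_{ii}^{-1}$ not entrywise but through the determinant: AM--GM gives $\sum_i L_{ii}^{-1} \ge N(\prod_i L_{ii})^{-1/N} = N(\det L(\tau))^{-1/N} = N\,(\det(I_N + \tau^2 D^TD))^{-1/(2N)}$, and then $1-e^{-t}\le t$ together with $\ln(1+x)\le x$ yield $N - \text{tr}(L(\tau)^{-1}) \le \tfrac12\sum_i \ln(1+\tau^2\lambda_i) \le \tfrac{\tau^2}{2}\sum_i\lambda_i = \tfrac{\tau^2}{2}\vertiii D \vertiii^2$. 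Combining the three bounds gives $\vertiii U_*(\tau) - U \vertiii \le \tau\vertiii D\vertiii$ for all $\tau>0$, so the constant $C_1=2$ (indeed $C_1=1$) works; the only genuine subtlety is the QR step, where the naive entrywise estimate of the triangular factor fails and one must route through $\det(I_N+\tau^2D^TD)$.
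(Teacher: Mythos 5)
Your proof is correct, but it follows a genuinely different route from the paper's. The paper writes $U_*(\tau)-U$ as a perturbation of $\tau D$ by a matrix factor — e.g.\ for QR it uses $U = U_{QR}(\tau)L(\tau)^T - \tau D$, applies the triangle inequality, and separately proves the Frobenius-norm estimates $\|L(\tau)^T-I_N\|_F\le\tau\vertiii D\vertiii$ and $\|(I_N+\tau^2 D^TD)^{1/2}-I_N\|_F\le\tau\vertiii D\vertiii$ via a trace identity on $B+B^T+B^TB=\tau^2 D^TD$; this yields constant $1$ for WY but $2$ for QR and PD. You instead exploit that both $U$ and $U_*(\tau)$ lie on the Stiefel manifold, reduce everything to the single identity $\vertiii U_*(\tau)-U\vertiii^2 = 2N-2\,\text{tr}(U^TU_*(\tau))$, and compute $U^TU_*(\tau)$ in closed form for each strategy ($I_N-\tfrac{\tau^2}{2}M(D^TD)$, $L(\tau)^{-T}$, $(I_N+\tau^2D^TD)^{-1/2}$ respectively); the three resulting scalar bounds, including the AM--GM/determinant argument that circumvents the non-commutativity of $L(\tau)$ with $D^TD$, all check out. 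Your version is more unified and gives the sharper constant $C_1=1$ uniformly (a fortiori $C_1=2$). What the paper's route buys in exchange is that its intermediate Frobenius estimates \eqref{lemma-eq2} and \eqref{PDP1} are reused verbatim in the proof of Proposition \ref{der-diff}; under your approach those would still have to be established separately there.
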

\begin{proof}
(1) For the WY strategy, we obtain from \eqref{WYthrm} that
\begin{eqnarray*}
\vertiii U_{WY}(\tau) - U \vertiii &=& \vertIII \Big(I - \frac{\tau}{2} \mathcal{W}\Big)^{-1}
          \Big(I + \frac{\tau}{2} \mathcal{W}\Big)U - U \vertIII \\
          &=& \vertIII \Big(I - \frac{\tau}{2} \mathcal{W}\Big)^{-1}\tau \mathcal{W} U\vertIII \\
          &\leq & \tau \Big\|\Big(I - \frac{\tau}{2} \mathcal{W}\Big)^{-1}\Big\|_2 \vertiii D \vertiii.
\end{eqnarray*}
Since the operator $\mathcal{W}$ is antisymmetric, we have
\begin{eqnarray*}
\Big\|\Big(I - \frac{\tau}{2} \mathcal{W}\Big)^{-1}\Big\|_2^2 &=& \lambda_{\max}
\left(\Big(I - \frac{\tau}{2} \mathcal{W}\Big)^{-T}\Big(I - \frac{\tau}{2}
\mathcal{W}\Big)^{-1}\right) \\
&=& \lambda_{\max}\Big((I - \frac{\tau^2}{4} \mathcal{W}^2)^{-1}\Big).
\end{eqnarray*}
Note that the eigenvalues of $\mathcal{W}$ are imaginary numbers. We see that the eigenvalues
of $I - \frac{\tau^2}{4} \mathcal{W}^2$ are not smaller than $1$, and hence the eigenvalues of
$(I - \frac{\tau^2}{4} \mathcal{W}^2)^{-1}$  belong to $(0, 1]$. Consequently,
\begin{equation}\label{WY-eq}
\Big\|\Big(I - \frac{\tau}{2} \mathcal{W}\Big)^{-1}\Big\|_2 \leq 1
\end{equation}
and
\begin{equation}\label{WY-conc1}
\vertiii U_{WY}(\tau) - U \vertiii \le \tau \vertiii D \vertiii, \ \forall \ \tau > 0.
\end{equation}

(2) For the QR strategy, we derive from \eqref{Proj} that
\begin{eqnarray*}
 U_{QR}(\tau)L(\tau)^T=\widetilde{U}(\tau)=U+\tau D, \\
\end{eqnarray*}
from which we have
\begin{eqnarray*}
U=U_{QR}(\tau)L(\tau)^T-\tau D.
\end{eqnarray*}
Therefore,
\begin{eqnarray*}
\vertiii U_{QR}(\tau) - U\vertiii
=  \vertIII U_{QR}(\tau)-U_{QR}(\tau)L(\tau)^T+\tau D\vertIII.
\end{eqnarray*}
By the triangle inequality and  \eqref{norm-conc3}, 
 we obtain
\begin{eqnarray}\label{lemma-eq1}
\vertiii U_{QR}(\tau) - U\vertiii
&\le& \vertIII U_{QR}(\tau)(I_N-L(\tau)^T)\vertIII+\tau\vertiii D\vertiii \nonumber \\
&=& \|L(\tau)^T-I_N\|_F+\tau\vertiii D\vertiii.
\end{eqnarray}

We now turn to estimate $\|L(\tau)^T-I_N\|_F$.
Let
\begin{equation*}
L(\tau)^T-I_N=B(\tau) \ \text{with} \ B=(b_{ij})_{i,j=1}^N,
\end{equation*}
then we get from  \eqref{eq111} and \eqref{def-L} that
\begin{equation*}
L(\tau)L(\tau)^T=(I_N+B(\tau)^T)(I_N+B(\tau))=I_N+\tau^2  D^T D  ,
\end{equation*}
namely,
\begin{equation*}
B(\tau)+B(\tau)^T+B(\tau)^TB(\tau)=\tau^2  D^T D  .
\end{equation*}
Thus we conclude from \eqref{norm-o} that
\begin{eqnarray*}
\tau^2\vertiii D\vertiii^2=\text{tr}(\tau^2  D^T D  )
&=&\text{tr}(B(\tau)+B(\tau)^T+B(\tau)^TB(\tau)) \\
&=& 2\sum_{i=1}^Nb_{ii}+\|B(\tau)\|_F^2.
\end{eqnarray*}
Let $\{l_{ii}\}_{i=1}^N$ be the diagonal elements (i.e. the eigenvalues) of $L(\tau)^T$.
Then for any $i\in\{1,2,\cdots,N\}$, there exists an eigenvector
$\alpha_i \in \mathbb{R}^{N\times1}$, such that
\begin{equation*}
L(\tau)^T\alpha_i=l_{ii}\alpha_i, \ \alpha_i^T\alpha_i=1.
\end{equation*}
We observe that
\begin{equation*}
l_{ii}^2=\alpha_i^TL(\tau)L(\tau)^T\alpha_i=
1 + \tau^2\alpha_i^T  (D^T D)\alpha_i\geq1,
\ \forall \ i\in\{1,2,\cdots,N\},
\end{equation*}
which together with $l_{ii}>0$ yields $l_{ii}\geq1$. Then we see $b_{ii}=l_{ii}-1\geq 0,
\ \forall \ i\in\{1,2,\cdots,N\}$, which implies
\begin{equation*}
\tau^2\vertiii D\vertiii^2=2\sum_{i=1}^Nb_{ii}+\|B(\tau)\|_F^2\geq\|B(\tau)\|_F^2.
\end{equation*}
Consequently, there holds
\begin{equation}\label{lemma-eq2}
\|L(\tau)^T-I_N\|_F=\|B(\tau)\|_F\le\tau\vertiii D\vertiii.
\end{equation}

Combining \eqref{lemma-eq1} and \eqref{lemma-eq2}, we get
\begin{equation}\label{QR-conc1}
\vertiii U_{QR}(\tau) - U\vertiii\le \tau\vertiii D\vertiii+\tau\vertiii D\vertiii
=2 \tau\vertiii D\vertiii, \ \forall \ \tau\geq 0.
\end{equation}

(3) For the PD strategy, we derive from \eqref{Polar} that
\begin{eqnarray*}
&U_{PD}(\tau)\big(I_N + \tau^2  D^T D  \big)^{\frac{1}{2}}=\widetilde{U}(\tau)=U+\tau D \\
&U=U_{PD}(\tau)\big(I_N + \tau^2  D^T D  \big)^{\frac{1}{2}}-\tau D.
\end{eqnarray*}
As a result, there holds
\begin{eqnarray*}
\vertiii U_{PD}(\tau) - U\vertiii
&=&   \vertIII U_{PD}(\tau)-U_{PD}(\tau)\big(I_N + \tau^2  D^T D  \big)^{\frac{1}{2}}+\tau D \vertIII.
\end{eqnarray*}
By the triangle inequality and \eqref{norm-conc3}, 
 we have
\begin{eqnarray}\label{lemma-eq3}
\vertiii U_{PD}(\tau) - U\vertiii &\le &  \vertIII U_{PD}(\tau)(I_N-\big(I_N + \tau^2  D^T D  \big)^{\frac{1}{2}})\vertIII+\tau\vertiii D\vertiii \nonumber \\
&=& \|\big(I_N + \tau^2  D^T D  \big)^{\frac{1}{2}}-I_N\|_F+\tau\vertiii D\vertiii.
\end{eqnarray}

Now we start to estimate $\|\big(I_N + \tau^2  D^T D  \big)^{\frac{1}{2}}-I_N\|_F$.
Let
\begin{equation*}
\big(I_N + \tau^2  D^T D  \big)^{\frac{1}{2}}-I_N=\tilde{B}(\tau) \ \text{with} \ \tilde{B}=(\tilde{b}_{ij})_{i,j=1}^N,
\end{equation*}
then it holds
\begin{equation*}
\big(I_N + \tau^2  D^T D  \big)=(I_N+\tilde{B}(\tau))^2,
\end{equation*}
namely,
\begin{equation*}
\tilde{B}(\tau)^2+2\tilde{B}(\tau)=\tau^2  D^T D  .
\end{equation*}
Thus  we conclude from \eqref{norm-o} that
\begin{eqnarray*}
\tau^2\vertiii D\vertiii^2 = \text{tr}(\tau^2  D^T D  )
&=&\text{tr}(\tilde{B}(\tau)^2+2\tilde{B}(\tau)) Â¡Â¡=  2\sum_{i=1}^N\tilde{b}_{ii}+\|\tilde{B}(\tau)\|_F^2.
\end{eqnarray*}
It is easy to verify that $\tilde{B}(\tau)$ is symmetric semi-positive  definite, which means $\displaystyle \sum_{i=1}^N\tilde{b}_{ii}\ge0$, hence
\begin{equation*}
\tau^2\vertiii D\vertiii^2=2\sum_{i=1}^NÂ¡Â¡\tilde{b}_{ii}+\|\tilde{B}(\tau)\|_F^2\geq\|\tilde{B}(\tau)\|_F^2.
\end{equation*}
Consequently, there holds
\begin{equation}\label{PDP1}
\|\big(I_N + \tau^2  D^T D  \big)^{\frac{1}{2}}-I_N\|_F=\|\tilde{B}(\tau)\|_F\le\tau\vertiii D\vertiii.
\end{equation}
From \eqref{lemma-eq3} and \eqref{PDP1}, we obtain
\begin{equation}\label{PD-conc1}
\vertiii U_{PD}(\tau) - U\vertiii\le  \tau\vertiii D\vertiii+\tau\vertiii D\vertiii
= 2 \tau\vertiii D\vertiii, \ \forall \ \tau\geq 0.
\end{equation}

Therefore, we get \eqref{prop4.1} from \eqref{WY-conc1}, \eqref{QR-conc1}, and \eqref{PD-conc1}, Â¡Â¡
where $C_1$ can be chosen as $C_1 = 2$.
\end{proof}

\begin{proposition}\label{der-diff}
For the WY strategy \eqref{WYthrm}, QR strategy \eqref{Proj} and PD strategy \eqref{Polar},
there exists a constant $C_2 > 0$, such that
\begin{equation}\label{prop4.2}
\begin{split}
 \vertiii U_{\ast}'(\tau) - U_{\ast}'(0) \vertiii = \vertiii U_{\ast}'(\tau) - D \vertiii
\le C_2\tau \vertiii D\vertiii^2, \ \forall \  \tau > 0,
\end{split}
\end{equation}
where $U_{\ast} (\tau)$ represents $U_{WY}(\tau)$, or $U_{QR}(\tau)$, or $U_{PD}(\tau)$,
and $U_{\ast}'(\tau)$ is the derivative of $U_{\ast}(\tau)$ with respect to $\tau$. Here, $C_2$ can be chosen as $1+\sqrt{2}$.
\end{proposition}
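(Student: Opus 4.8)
The plan is to handle the three orthogonalization strategies one at a time. In each case I would differentiate the explicit formula for $U_\ast(\tau)$ in $\tau$, put $U_\ast'(\tau)-D$ into closed algebraic form, and then estimate its $\vertiii\cdot\vertiii$-norm using only: (i) the compatibility inequalities \eqref{norm-oo}, \eqref{norm-om}, \eqref{norm-conc3} of Lemma \ref{lemma222}; (ii) the elementary facts $\|D^TD\|_F\le\vertiii D\vertiii^2$ and that each of $\left(I-\frac{\tau}{2}\mathcal{W}\right)^{-1}$, $L(\tau)^{-1}$, $(I_N+\tau^2D^TD)^{-1/2}$ has $2$-norm $\le 1$ (because $I_N+\tau^2D^TD$ has all eigenvalues $\ge 1$ and the eigenvalues of $I-\frac{\tau}{2}\mathcal{W}$ have modulus $\ge 1$, exactly as used in Section \ref{subsec-ort} and in the proof of Proposition \ref{diff}); and (iii) the two bounds $\|L(\tau)^T-I_N\|_F\le\tau\vertiii D\vertiii$ and $\|(I_N+\tau^2D^TD)^{1/2}-I_N\|_F\le\tau\vertiii D\vertiii$ already derived inside the proof of Proposition \ref{diff} (see \eqref{lemma-eq2} and \eqref{PDP1}). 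Since $\mathcal{W}$, $D^TD$ and all their matrix functions commute, the WY and PD computations are purely algebraic and clean; the QR case, which requires the derivative of the Cholesky factor, is the only subtle point.

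For the WY strategy I would set $A=I-\frac{\tau}{2}\mathcal{W}$; since $I+\frac{\tau}{2}\mathcal{W}=2I-A$, formula \eqref{WYthrm} reads $U_{WY}(\tau)=2A^{-1}U-U$, hence $U_{WY}'(\tau)=2(A^{-1})'U=A^{-1}\mathcal{W}A^{-1}U=A^{-2}D$ (using $(A^{-1})'=\frac12 A^{-1}\mathcal{W}A^{-1}$, commutativity, and $\mathcal{W}U=D$). Then $U_{WY}'(\tau)-D=(A^{-2}-I)D$, and factoring $A^{-2}-I=(A^{-1}-I)(A^{-1}+I)=\frac{\tau}{2}\mathcal{W}A^{-1}(A^{-1}+I)$ together with $\mathcal{W}D=-U(D^TD)$ gives $U_{WY}'(\tau)-D=-\frac{\tau}{2}A^{-1}(A^{-1}+I)U(D^TD)$; now \eqref{norm-conc3}, \eqref{norm-oo} and $\|A^{-1}\|_2\le 1$ yield $\vertiii U_{WY}'(\tau)-D\vertiii\le\frac{\tau}{2}\bigl(\|A^{-1}\|_2^2+\|A^{-1}\|_2\bigr)\|D^TD\|_F\le\tau\vertiii D\vertiii^2$.

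For the QR strategy I would differentiate $U_{QR}(\tau)=\widetilde{U}(\tau)L(\tau)^{-T}$ (smooth in $\tau$ since $\widetilde{U}(\tau)^T\widetilde{U}(\tau)=I_N+\tau^2D^TD$ is smooth and positive definite) to obtain $U_{QR}'(\tau)=DL^{-T}-U_{QR}(\tau)X^T$ with $X:=L^{-1}L'$; because $L(\tau)$ is lower triangular with positive diagonal for every $\tau$, $L'(\tau)$ and hence $X$ are lower triangular, and differentiating $LL^T=I_N+\tau^2D^TD$ and conjugating by $L^{-1}$ gives $X+X^T=S:=2\tau L^{-1}(D^TD)L^{-T}$. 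For symmetric $S$ the lower-triangular $X$ with $X+X^T=S$ satisfies $\|X\|_F\le\frac{1}{\sqrt2}\|S\|_F$, so $\|X\|_F\le\sqrt2\,\tau\vertiii D\vertiii^2$ by $\|L^{-1}\|_2\le1$ and $\|D^TD\|_F\le\vertiii D\vertiii^2$. Combining with $\|L^{-T}-I_N\|_F=\|L^{-1}-I_N\|_F\le\|L^{-1}\|_2\|L-I_N\|_F\le\|L^T-I_N\|_F\le\tau\vertiii D\vertiii$ (from \eqref{lemma-eq2}) and $\vertiii U_{QR}(\tau)X^T\vertiii=\|X\|_F$ (from \eqref{norm-conc3}), I get $\vertiii U_{QR}'(\tau)-D\vertiii\le\vertiii D\vertiii\,\|L^{-T}-I_N\|_F+\|X\|_F\le(1+\sqrt2)\tau\vertiii D\vertiii^2$. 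This is the case that fixes $C_2=1+\sqrt2$, and controlling $X=L^{-1}L'$, i.e. the derivative of the Cholesky factorization, is the main obstacle.

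For the PD strategy, with $M(\tau)=I_N+\tau^2D^TD$ and $M'(\tau)=2\tau D^TD$ commuting with $M$, one has $(M^{-1/2})'=-\tau M^{-3/2}(D^TD)$, so $U_{PD}'(\tau)-D=D\bigl(M^{-1/2}-I_N\bigr)-\tau\,\widetilde{U}(\tau)M^{-3/2}(D^TD)$; using $\widetilde{U}(\tau)=U_{PD}(\tau)M^{1/2}$ turns the last term into $-\tau\,U_{PD}(\tau)M^{-1}(D^TD)$, whose norm equals $\tau\|M^{-1}D^TD\|_F\le\tau\vertiii D\vertiii^2$ by \eqref{norm-conc3} and $\|M^{-1}\|_2\le1$, while $\|M^{-1/2}-I_N\|_F\le\|M^{-1/2}\|_2\|M^{1/2}-I_N\|_F\le\tau\vertiii D\vertiii$ by \eqref{PDP1}; hence $\vertiii U_{PD}'(\tau)-D\vertiii\le 2\tau\vertiii D\vertiii^2$. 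Combining the three cases gives \eqref{prop4.2} with $C_2=\max\{1,\,2,\,1+\sqrt2\}=1+\sqrt2$.
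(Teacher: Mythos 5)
Your proposal is correct and follows essentially the same route as the paper: in each case you arrive at the same closed-form expression for $U_\ast'(\tau)-D$ (for WY, $-\tfrac{\tau}{2}A^{-1}(A^{-1}+I)U(D^TD)$; for QR, the split into $D(L^{-T}-I_N)$ plus the Cholesky-derivative term controlled by the $\sqrt2$ symmetrization trick for triangular matrices; for PD, $D(M^{-1/2}-I_N)-\tau U_{PD}M^{-1}(D^TD)$), and you invoke the same unit operator-norm bounds and the estimates \eqref{lemma-eq2}, \eqref{PDP1} to get the identical constants $1$, $1+\sqrt2$, $2$. The only deviations are cosmetic (explicit resolvent formula instead of implicit differentiation for WY; a direct $\|L^{-1}\|_2\le1$ bound in place of the similarity-invariance step \eqref{similar}).
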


\begin{proof}
(1) For the WY strategy, we obtain from \eqref{WYthrm} that
\begin{eqnarray*}
(I-\frac{\tau}{2}\mathcal{W})U_{WY}(\tau) = (I+\frac{\tau}{2}\mathcal{W})U = U+\frac{\tau}{2}D.
\end{eqnarray*}
Thus,
\begin{eqnarray*}
U_{WY}'(\tau)=(I-\frac{\tau}{2}\mathcal{W})^{-1}\left(\frac{D+\mathcal{W}U_{WY}(\tau)}{2}\right).
\end{eqnarray*}
So we have
\begin{eqnarray*}
&& \vertiii U_{WY}'(\tau) - D\vertiii \\
&=& \vertiii(I-\frac{\tau}{2}\mathcal{W})^{-1}\left(\frac{D}{2}+\frac{\mathcal{W}U_{WY}(\tau)}{2}\right)-D \vertiii\\
&=& \vertiii(I-\frac{\tau}{2}\mathcal{W})^{-1}\left(\frac{D}{2}+\frac{\mathcal{W}U_{WY}(\tau)}{2} - D + \frac{\tau}{2}\mathcal{W}D\right) \vertiii,
\end{eqnarray*}
which together with the fact that $D = \mathcal{W}U$ leads to
\begin{eqnarray*}
&& \vertiii U_{WY}'(\tau) - D\vertiii \\
&=&\vertiii(I-\frac{\tau}{2}\mathcal{W})^{-1}\left(\frac{\tau}{2}\mathcal{W}D+\frac{1}{2}\mathcal{W}(U_{WY}(\tau)-U)\right)\vertiii. \\
\end{eqnarray*}
Since
\begin{eqnarray*}
 \mathcal{W}D = (DU^T - UD^T)D = - U(D^T D)
\end{eqnarray*}
and
\begin{eqnarray*}
 U_{WY}(\tau) - U = \Big(I - \frac{\tau}{2} \mathcal{W}\Big)^{-1}
          \Big(I + \frac{\tau}{2} \mathcal{W}\Big)U - U = \Big(I - \frac{\tau}{2} \mathcal{W}\Big)^{-1}\tau \mathcal{W} U,
\end{eqnarray*}
we get
    \begin{eqnarray*}
&& \vertiii U_{WY}'(\tau) - D\vertiii \\
&=&\vertiii(I-\frac{\tau}{2}\mathcal{W})^{-1}\left(-\frac{\tau}{2}U(D^T D)+\frac{\tau}{2}\mathcal{W}(I-\frac{\tau}{2}\mathcal{W})^{-1}D\right)\vertiii \\
&=& \vertiii(I-\frac{\tau}{2}\mathcal{W})^{-1}\left(-\frac{\tau}{2}U(D^T D)-\frac{\tau}{2}(I-\frac{\tau}{2}\mathcal{W})^{-1}U(D^T D) \right)\vertiii.\\
\end{eqnarray*}
By the triangle inequality and \eqref{norm-oo} and  \eqref{norm-om}, we have
\begin{eqnarray*}
 \vertiii U_{WY}'(\tau) - D\vertiii &\le&\| (I-\frac{\tau}{2}\mathcal{W})^{-1}\|_2\left(\frac{\tau}{2}\vertiii U(D^T D) \vertiii +\frac{\tau}{2}\| (I-\frac{\tau}{2}\mathcal{W})^{-1}\|_2\vertiii U(D^T D) \vertiii\right),
\end{eqnarray*}
which together with  \eqref{norm-conc3} yields
\begin{eqnarray*}
 \vertiii U_{WY}'(\tau) - D\vertiii &\le&\| (I-\frac{\tau}{2}\mathcal{W})^{-1}\|_2\left(\frac{\tau}{2} \vertiii D\vertiii^2+\frac{\tau}{2}\| (I-\frac{\tau}{2}\mathcal{W})^{-1}\|_2 \vertiii D\vertiii^2\right).
\end{eqnarray*}
We then obtain from \eqref{WY-eq} that
\begin{eqnarray}\label{WY-conc2}
 \vertiii U_{WY}'(\tau) - D\vertiii \le   \tau\vertiii D\vertiii^2 .
\end{eqnarray}

(2) For the QR strategy, we have from $U_{QR}(\tau)L(\tau)^T=U+\tau D$ that
\begin{equation*}
U_{QR}'(\tau)=(D-U_{QR}(\tau)L'(\tau)^T) L(\tau)^{-T},
\end{equation*}
and hence
\begin{eqnarray}\label{QR1}
&&\vertiii U_{QR}'(\tau)-D\vertiii  \notag \\
&=&\vertiii D(L(\tau)^{-T}-I_N)-U_{QR}(\tau)L'(\tau)^T L(\tau)^{-T}\vertiii  \notag \\
&=&\vertiii D L(\tau)^{-T}(I_N-L(\tau)^T)-U_{QR}(\tau)L'(\tau)^T L(\tau)^{-T}\vertiii  \notag \\
&\le&\vertiii D \vertiii \|L(\tau)^{-T}\|_2 \|L(\tau)^T-I_N\|_F +
       \|L'(\tau)^T L(\tau)^{-T}\|_F,
\end{eqnarray}
where \eqref{normF-conc3}, \eqref{norm-om}, and \eqref{norm-conc3} are used in the last line. Let
$L(\tau)^T=P\Sigma Q^{T}$ be the SVD of $L(\tau)^T$. We see from  \eqref{est-eigen} and \eqref{def-L}
that $\Sigma$ is a diagonal matrix with diagonal elements larger than 1.
Thus, we obtain from the definition of 2-norm that
\begin{equation}\label{QR2}
\|L(\tau)^{-T}\|_2=\|Q \Sigma^{-1} P^T\|_2=\|\Sigma^{-1}\|_2\le1.
\end{equation}
%
In further, we get from $L(\tau) L(\tau)^T=I+\tau^2  D^T D $
and the fact $L(\tau)$ is invertible that
\begin{equation*}
L'(\tau)^{T} L(\tau)^{-T}+L^{-1}(\tau)L'(\tau)=
2\tau L^{-1}(\tau)(D^T D) L(\tau)^{-T}
\end{equation*}
and
\begin{equation*}
L^{-1}(\tau)=L(\tau)^T (I+\tau^2  D^T D )^{-1}.
\end{equation*}
Since $L'(\tau)^{T}L(\tau)^{-T}$ is upper triangular and
$(L'(\tau)^{T}L(\tau)^{-T})^T=L^{-1}(\tau)L'(\tau)$, we have
\begin{eqnarray*}
\sqrt{2}\|L'(\tau)^{T} L(\tau)^{-T}\|_F&\le&\|L'(\tau)^{T} L(\tau)^{-T}+L^{-1}(\tau)L'(\tau)\|_F  \\
&=&2\tau\|L^{-1}(\tau)(D^T D) L(\tau)^{-T}\|_F   \\
&=&2\tau\|L(\tau)^T (I_N+\tau^2 D^T D)^{-1}( D^T D) L(\tau)^{-T}\|_F.
\end{eqnarray*}
Note that $  D^T D $ and $(I_N+\tau^2  D^T D )^{-1}$
are commutable, and both
\begin{equation*}
  L^{-1}(\tau)( D^T D) L(\tau)^{-T}
\end{equation*}
  and
  \begin{equation*}
  (I_N+\tau^2  D^T D  )^{-1}  (D^T D)
  \end{equation*}
are symmetry, we derive from \eqref{similar}, \eqref{normF-conc3}, 
and
\begin{eqnarray*}
\|(I+\tau^2   D^T D )^{-1}\|_2 \le 1
\end{eqnarray*}
that
\begin{eqnarray}\label{QR3}
\|L'(\tau)^{T}L(\tau)^{-T}\|_F&\le&\sqrt{2}\tau\|L(\tau)^T(I_N+\tau^2  D^T
D  )^{-1}(D^T D) L(\tau)^{-T}\|_F \notag\\
&=&\sqrt{2}\tau\|(I+\tau^2  D^T D  )^{-1}  D^T D  \|_F \notag  \\
&\leq&\sqrt{2}\tau\|(I+\tau^2  D^T D  )^{-1}\|_2  \|   D^T D  \|_F \notag  \\
&\le&\sqrt{2}\tau\vertiii D\vertiii^2.
\end{eqnarray}

Combining \eqref{lemma-eq2}, \eqref{QR1}, \eqref{QR2}, and \eqref{QR3}, we get
\begin{equation}\label{QR-conc2}
\vertiii U_{QR}'(\tau)-D\vertiii  \le \tau\vertiii D\vertiii^2+
\sqrt{2}\tau\vertiii DÂ¡Â¡\vertiii^2=(1+\sqrt{2})\tau\vertiii DÂ¡Â¡\vertiii ^2.
\end{equation}

(3) For the PD strategy, we have from $U_{PD}(\tau)\big(I_N + \tau^2  D^T D  \big)^{\frac{1}{2}}=U+\tau D$ that
\begin{equation*}
U_{PD}'(\tau)=\left(D-\tau U_{PD}(\tau)(D^T D) \big(I_N + \tau^2  D^T D  \big)^{-\frac{1}{2}}\right)\big(I_N + \tau^2  D^T D  \big)^{-\frac{1}{2}}.
\end{equation*}
Hence
\begin{eqnarray}\label{PD11}
&&\vertiii U_{PD}'(\tau)-D\vertiii  \notag \\
&=&\vertiii D(\big(I_N + \tau^2  D^T D  \big)^{-\frac{1}{2}}-I_N)-\tau U_{PD}(\tau)( D^T D ) \big(I_N + \tau^2  D^T D  \big)^{-1}\vertiii  \notag \\
&=&\vertiii D(\big(I_N + \tau^2  D^T D  \big)^{-\frac{1}{2}}-I_N)-\tau U_{PD}(\tau) \big(I_N + \tau^2  D^T D  \big)^{-1}  D^T D  \vertiii  \notag \\
&\le&\vertiii D\vertiii \|\big(I_N + \tau^2  D^T D  \big)^{-\frac{1}{2}}\|_2\|\big(I_N + \tau^2  D^T D  \big)^{\frac{1}{2}}-I_N\|_F+ \notag \\
&&\tau\|(I_N+\tau^2  D^T D  \big)^{-1}\| _2\|  D^T D \|_F\notag,
\end{eqnarray}
where \eqref{normF-conc3}, \eqref{norm-om}, and \eqref{norm-conc3} are used in the last inequality.
 We see from the definition of 2-norm that $\Big\|\Big(I_N+\tau^2  D^T D \Big)
^{-\frac{1}{2}}\Big\|_2 \le 1$ and $\Big\|\Big(I_N+\tau^2  D^T D \Big)
^{-1}\Big\|_2 \le 1$. Thus, we get from \eqref{PDP1} that
\begin{eqnarray}\label{PD-conc2}
\vertiii U_{PD}'(\tau)-D\vertiii  &\le& \vertiii D\vertiii \|\big(I_N + \tau^2  D^T D  \big)^{-\frac{1}{2}}\|_2\|\big(I_N + \tau^2  D^T D  \big)^{\frac{1}{2}}-I_N\|_F  \nonumber  \\
& & \tau  \|(I_N+\tau^2  D^T D  \big)^{-1}\| _2\|  D^T D \|_F  \nonumber \\
&\le& \tau\vertiii D\vertiii^2+\tau\vertiii D\vertiii^2=2\tau\vertiii D\vertiii^2.
\end{eqnarray}
%
%
%

Therefore, combining \eqref{WY-conc2}, \eqref{QR-conc2}, and \eqref{PD-conc2}, we obtain \eqref{prop4.2}, where $C_2$ can be chosen as
\begin{eqnarray*}
C_2 = \max&(1,
 1 + \sqrt{2},  2) = 1 + \sqrt{2}.
\end{eqnarray*}
\end{proof}

\begin{remark}
Similar conclusions as \eqref{prop4.1} and \eqref{prop4.2} for the WY strategy are
shown in \cite{JD}, and our conclusions  are obtained without using any requirement for $\tau$.
We note also that there are no similar estimations for either
QR or PD strategy in the literature.
\end{remark}

To use Assumption \ref{assum_hess} in our convergence proof, we should keep every
iteration point $[U_n] \in B([U^*], \delta_1)$. We now prove that every iteration point generated by our algorithms is in fact in $B([U^*], \delta_1)$. We obtain from Lemma \ref{lema-uniq} that
for any $\delta_2 \in (0, \delta_1/(1+\frac{C_1}{\nu_1}L_1))$, there exists an $E_0$ and the corresponding  level set
\begin{equation}\label{level-set-L}
\mathcal{L} = \{[U]\in \mathcal{G}_{N_g}^N: E(U)\le E_0\},
\end{equation}
 such that
\begin{equation}\label{level}
\{[U]: [U] \in \mathcal{L} \cap B([U^*], \delta_1)\}
\subset B([U^*], \delta_2).
\end{equation}
In our following analysis, we use a fixed $\delta_2 \in (0, \delta_1/(1+\frac{C_1}{\nu_1}L_1))$ and the
corresponding $E_0$. We have the following lemma.

\begin{lemma} \label{lema1}
Let Assumptions \ref{assum_lip} and \ref{assum_hess} hold true. For the sequence  $\{U_n\}_{n\in \mathbb{N}_0}$ generated by
Algorithm CG-WY, or Algorithm CG-QR, or Algorithm CG-PD,  if $[U_0] \in B([U^*], \delta_2) \cap \mathcal{L}$,
then  $$[U_n] \in B([U^*], \delta_2) \cap \mathcal{L}, \forall \ n \in \mathbb{N}_{0}.$$
\end{lemma}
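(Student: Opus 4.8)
\emph{Proof proposal.} The plan is to argue by induction on $n$, the base case $n=0$ being the hypothesis. Assume $[U_n]\in B([U^*],\delta_2)\cap\mathcal{L}$; I will show the same inclusion for $[U_{n+1}]$. The argument has two ingredients: (i) an energy-descent step giving $[U_{n+1}]\in\mathcal{L}$, and (ii) a proximity step giving $\mathrm{dist}([U_{n+1}],[U^*])<\delta_1$. Once both are in hand, the inclusion \eqref{level} forces $[U_{n+1}]\in B([U^*],\delta_2)$, closing the induction. (If $D_n=0$ then $U_{n+1}=U_n$ and there is nothing to prove, so I may assume $D_n\neq 0$.)

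For (i): step 5 of the algorithm guarantees the sign condition \eqref{cond1}, $\mathrm{tr}(\nabla_G E(U_n)^TD_n)\le 0$, while $\tau_n=t^{m_n}\tilde\tau_n>0$. Hence the backtracking condition \eqref{back-cond} yields $E(U_{n+1})\le E(U_n)+\eta\tau_n\,\mathrm{tr}(\nabla_G E(U_n)^TD_n)\le E(U_n)\le E_0$, so $[U_{n+1}]\in\mathcal{L}$.

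For (ii): the key is to bound the step length $\tau_n\vertiii D_n\vertiii$. Since $[U_n]\in B([U^*],\delta_2)\subset B([U^*],\delta_1)$, Assumption \ref{assum_hess} applies, so $\mathrm{Hess}_GE(U_n)[D_n,D_n]\ge\nu_1\vertiii D_n\vertiii^2>0$; thus \eqref{pos-hess} holds and the first branch of \eqref{stepsize} is active, giving $\tau_n\le\tilde\tau_n\le -\mathrm{tr}(\nabla_G E(U_n)^TD_n)/\mathrm{Hess}_GE(U_n)[D_n,D_n]$. Bounding the numerator by Cauchy--Schwarz (the compatibility \eqref{norm-oo}, which gives $|\mathrm{tr}(\nabla_G E(U_n)^TD_n)|\le\vertiii\nabla_G E(U_n)\vertiii\,\vertiii D_n\vertiii$) and the denominator below by $\nu_1\vertiii D_n\vertiii^2$, I obtain $\tau_n\vertiii D_n\vertiii\le\vertiii\nabla_G E(U_n)\vertiii/\nu_1$. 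Next, let $U_n^*$ be a representative of $[U^*]$ realizing $\mathrm{dist}([U_n],[U^*])=\vertiii U_n-U_n^*\vertiii$ (it exists by Lemma \ref{lema-dist}); since $[U^*]$ is a stationary point and $\nabla_G E$ is equivariant under $\mathcal{O}^{N\times N}$, we have $\nabla_G E(U_n^*)=0$, so the Grassmann Lipschitz bound \eqref{lip} gives $\vertiii\nabla_G E(U_n)\vertiii=\vertiii\nabla_G E(U_n)-\nabla_G E(U_n^*)\vertiii\le L_1\vertiii U_n-U_n^*\vertiii\le L_1\delta_2$. Feeding this into Proposition \ref{diff}, $\vertiii U_{n+1}-U_n\vertiii\le C_1\tau_n\vertiii D_n\vertiii\le C_1L_1\delta_2/\nu_1$.

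Finally, using the triangle inequality for the Grassmann distance (which follows from the orthogonal invariance of $\vertiii\cdot\vertiii$) together with $\mathrm{dist}([U_{n+1}],[U_n])\le\vertiii U_{n+1}-U_n\vertiii$, I get $\mathrm{dist}([U_{n+1}],[U^*])\le\vertiii U_{n+1}-U_n\vertiii+\mathrm{dist}([U_n],[U^*])\le(1+C_1L_1/\nu_1)\delta_2<\delta_1$, the last step by the choice $\delta_2<\delta_1/(1+\tfrac{C_1}{\nu_1}L_1)$. Hence $[U_{n+1}]\in B([U^*],\delta_1)\cap\mathcal{L}$, and \eqref{level} then yields $[U_{n+1}]\in B([U^*],\delta_2)$, completing the induction. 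I expect the main point requiring care to be verifying that the useful branch of the step-size rule is the one in force along the entire iteration --- that is, that \eqref{pos-hess} holds, which is exactly what the induction hypothesis $[U_n]\in B([U^*],\delta_1)$ supplies through Assumption \ref{assum_hess} --- and keeping the minimizing representative $U_n^*$ consistent with the equivariance of $\nabla_G E$; the remaining inequalities are a routine assembly of Proposition \ref{diff}, the bound \eqref{lip}, and the step-size formula \eqref{stepsize}.
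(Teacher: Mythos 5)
Your proposal is correct and follows essentially the same route as the paper's proof: induction, using Assumption \ref{assum_hess} to activate the first branch of \eqref{stepsize}, bounding $\tau_n\vertiii D_n\vertiii$ via Cauchy--Schwarz and the coercivity constant $\nu_1$, invoking $\nabla_G E(U^*)=0$ with the Lipschitz bound and Proposition \ref{diff} to get $\vertiii U_{n+1}-U_n\vertiii\le \frac{C_1}{\nu_1}L_1\delta_2$, then the triangle inequality, the descent condition \eqref{back-cond}, and the inclusion \eqref{level}. The only cosmetic additions are your explicit handling of $D_n=0$ and your remark on the equivariance of $\nabla_G E$, both of which the paper leaves implicit.
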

\begin{proof}
Let us prove the conclusion by induction. Since $[U_0] \in
B([U^*], \delta_2)\cap \mathcal{L}$,
we see that the conclusion is true for $n=0$. We assume that $[U_n] \in
B([U^*], \delta_2) \cap\mathcal{L}$, which implies
that $\text{Hess}_GE(U_n)[D_n, D_n] > 0$. Then we have from \eqref{stepsize} that
\begin{eqnarray*}
\tilde{\tau}_n \le -\frac{\text{tr} (\nabla_G E(U_n)^T D_n)}{\text{Hess}_GE
(U_n)[D_n, D_n]}.
\end{eqnarray*}
Therefore, from  Proposition \ref{diff} we have
\begin{align*}
\vertiii U_{n+1} - U_n\vertiii  & \le C_1 \tau_n\vertiii D_n\vertiii  \le C_1 \tilde{\tau}_n\vertiii D_n\vertiii  \\
                        & \le C_1\frac{|\text{tr} (\nabla_G E(U_n)^T D_n)|}
                         {\text{Hess}_GE(U_n)[D_n, D_n]}\vertiii D_n\vertiii  \\
                        & \le C_1\frac{\vertiii \nabla_G E(U_n)\vertiii  \vertiii D_n\vertiii }
                         {\text{Hess}_GE(U_n)[D_n, D_n]}\vertiii D_n\vertiii,
\end{align*}
which together with  Assumption \ref{assum_hess} leads to
\begin{align*}
\vertiii U_{n+1} - U_n\vertiii \le \frac{C_1}{\nu_1}\vertiii \nabla_G E(U_n)\vertiii.
\end{align*}
 We obtain from Lemma \ref{lema-dist} that there exists  $P_n \in \mathcal{O}^{N\times N}$,
such that
 \begin{eqnarray*}
 \text{dist}([U_n], [U^*]) = \vertiii U_n - U^* P_n \vertiii,
\end{eqnarray*}
which together with $\nabla_G E(U^*) = 0$ and Assumption
\ref{assum_lip} leads to
 \begin{eqnarray*}
  \vertiii U_{n+1} - U_n\vertiii &\le& \frac{C_1}{\nu_1}\vertiii \nabla_G E(U_n)- \nabla_G E(U^*)P_n\vertiii  \\
    &=& \frac{C_1}{\nu_1}\vertiii \nabla_G E(U_n)- \nabla_G E(U^*P_n)\vertiii \\
    &\le& \frac{C_1}{\nu_1}L_1\vertiii U_n - U^*P_n\vertiii  \le \frac{C_1}{\nu_1}L_1\delta_2,
\end{eqnarray*}
where the assumption $[U_n] \in B([U^*], \delta_2) \cap \mathcal{L}$ is used in the last inequality.
Consequently,
\begin{eqnarray*}
\text{dist}([U_{n+1}], [U^*]) &\leq & \vertiii U_{n+1}-U^*P_n\vertiii  \\
 &\leq & \vertiii U_{n+1}-U_n\vertiii+\vertiii U_n-U^*P_n\vertiii \\
  &\le& \vertiii U_{n+1} - U_n\vertiii + \delta_2 \le (1+\frac{C_1}{\nu_1}L_1)\delta_2\le \delta_1.
\end{eqnarray*}
We see from \eqref{back-cond} and the fact $\text{tr}(\nabla_G E(U_n)^TD_n ) \le 0$ (see \eqref{cond1}) that
\begin{align*}
  E(U_{n+1}(\tau_n)) \leq  \ E(U_n) + \eta\tau_n\text{tr}(\nabla_G E(U_n)^T
  D_n)  \leq E(U_n).
\end{align*}
Therefore we get $[U_{n+1}] \in B([U^*], \delta_1) \cap \mathcal{L}$.
Finally, we obtain from \eqref{level} that
$[U_{n+1}] \in B([U^*], \delta_2) \cap \mathcal{L}$ and complete the proof.
\end{proof}

We now turn to prove the convergence of our algorithms. The basic idea is as follows: We first prove that the step sizes \ $\tau_n \ $ used in our algorithms are bounded from below; then we show that if the limit inferior of $\vertiii \nabla_G E(U_n)\vertiii$ is larger than 0, then the conjugate gradient parameter $\beta_n$ must go down to 0 as $n$ goes up to $\infty$ under our assumptions; we finally reach our convergence result by contradiction.
\begin{lemma}\label{lema-lbd}
  Let $\{U_n\}_{n\in \mathbb{N}_0}$ be a sequence  generated by
Algorithm CG-WY, or Algorithm CG-QR, or Algorithm CG-PD. If Assumption \ref{assum_lip} holds true,  then for the step size $\tau_n$, we have
\begin{equation}\label{stepsize-lb}
\tau_n \ge \min\left(\tilde{\tau}_n, \frac{2t(\eta-1)\text{tr}(\nabla_G E(U_n)^T D_n )}{(C_0C_2+L_0C_1)\vertiii D_n\vertiii ^2}\right).
\end{equation}
\end{lemma}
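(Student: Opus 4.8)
The plan is to show that the backtracking in \eqref{step-back}--\eqref{back-cond} terminates after a controlled number of steps, i.e. that once $\tau_n = t^m \tilde\tau_n$ is small enough, the Armijo condition \eqref{back-cond} automatically holds; then $\tau_n$ is either $\tilde\tau_n$ itself (if no backtracking was needed) or at least $t$ times the threshold value at which \eqref{back-cond} first fails to be guaranteed. So the first thing I would do is fix $n$, write $U_{n+1}(\tau) = \text{ortho}(U_n, D_n, \tau)$ (one of $U_{WY}, U_{QR}, U_{PD}$), and use the fundamental theorem of calculus along the curve $\tau \mapsto E(U_{n+1}(\tau))$:
\begin{equation*}
E(U_{n+1}(\tau)) - E(U_n) = \int_0^\tau \text{tr}\!\left(\nabla E(U_{n+1}(s))^T U'_{n+1}(s)\right)\,ds.
\end{equation*}
Here I use $U_{n+1}(0) = U_n$ and $U'_{n+1}(0) = D_n$ (from \eqref{WYthrm} ``$U_{WY}'(0)=D$'' and \eqref{dtau}).

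Next I would add and subtract the ``reference'' integrand $\text{tr}(\nabla E(U_n)^T D_n) = \text{tr}(\nabla_G E(U_n)^T D_n)$ (the last equality because $D_n$ is tangent, so $U_n U_n^T$ annihilates nothing relevant) and bound the difference:
\begin{equation*}
\text{tr}\!\left(\nabla E(U_{n+1}(s))^T U'_{n+1}(s)\right) - \text{tr}\!\left(\nabla E(U_n)^T D_n\right)
\end{equation*}
splits, after adding and subtracting $\text{tr}(\nabla E(U_n)^T U'_{n+1}(s))$, into a term controlled by $\vertiii \nabla E(U_{n+1}(s)) - \nabla E(U_n)\vertiii \cdot \vertiii U'_{n+1}(s)\vertiii$ and a term controlled by $\vertiii \nabla E(U_n)\vertiii \cdot \vertiii U'_{n+1}(s) - D_n\vertiii$. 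For the first I use Assumption \ref{assum_lip} (Lipschitz constant $L_0$) together with Proposition \ref{diff} to get $\vertiii \nabla E(U_{n+1}(s)) - \nabla E(U_n)\vertiii \le L_0 C_1 s \vertiii D_n\vertiii$, and $\vertiii U'_{n+1}(s)\vertiii \le \vertiii D_n\vertiii + C_2 s\vertiii D_n\vertiii^2$ from Proposition \ref{der-diff} (or, more crudely, just bound $\vertiii U'_{n+1}(s)\vertiii$; the leading-order term is what matters for small $s$). For the second I use \eqref{der-bound} ($\vertiii \nabla E(U_n)\vertiii \le C_0$) and Proposition \ref{der-diff}, giving $\le C_0 C_2 s \vertiii D_n\vertiii^2$. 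Integrating over $s\in(0,\tau)$ yields something of the form
\begin{equation*}
E(U_{n+1}(\tau)) \le E(U_n) + \tau\,\text{tr}\!\left(\nabla_G E(U_n)^T D_n\right) + \tfrac{1}{2}(C_0 C_2 + L_0 C_1)\tau^2 \vertiii D_n\vertiii^2,
\end{equation*}
up to harmless higher-order adjustments in the constant.

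Finally I would compare this with the Armijo target $E(U_n) + \eta\tau\,\text{tr}(\nabla_G E(U_n)^T D_n)$. Since $\text{tr}(\nabla_G E(U_n)^T D_n) \le 0$ by Step 5 of the algorithm (i.e. \eqref{cond1}), the condition \eqref{back-cond} holds as soon as
\begin{equation*}
\tfrac{1}{2}(C_0 C_2 + L_0 C_1)\tau \vertiii D_n\vertiii^2 \le (\eta - 1)\,\text{tr}\!\left(\nabla_G E(U_n)^T D_n\right),
\end{equation*}
that is, for all $\tau \le \frac{2(\eta-1)\text{tr}(\nabla_G E(U_n)^T D_n)}{(C_0 C_2 + L_0 C_1)\vertiii D_n\vertiii^2}$ (the right side is positive). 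Hence the backtracking exponent $m_n$ is the first nonnegative integer with $t^{m_n}\tilde\tau_n$ below this bound; if $m_n = 0$ then $\tau_n = \tilde\tau_n$, and otherwise $t^{m_n - 1}\tilde\tau_n$ exceeds the bound, so $\tau_n = t^{m_n}\tilde\tau_n \ge \frac{2t(\eta-1)\text{tr}(\nabla_G E(U_n)^T D_n)}{(C_0 C_2 + L_0 C_1)\vertiii D_n\vertiii^2}$. Combining the two cases gives exactly \eqref{stepsize-lb}. The main obstacle I anticipate is bookkeeping the cross terms in the integrand cleanly so that the constant comes out as the stated $C_0 C_2 + L_0 C_1$ without a stray factor; in particular one must be careful that the $\vertiii U'_{n+1}(s)\vertiii$ factor in the Lipschitz term is handled so its $O(s^2\vertiii D_n\vertiii^3)$ part does not inflate the constant — treating it via Proposition \ref{der-diff} as $D_n$ plus a higher-order remainder, and absorbing the remainder, is the delicate point. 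Everything else is routine integration and the sign condition \eqref{cond1}.
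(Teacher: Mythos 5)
Your overall strategy is exactly the paper's: write $E(U_{n+1}(\tau))-E(U_n)$ as $\int_0^\tau \text{tr}(\nabla E(U_{n+1}(s))^T U_{n+1}'(s))\,ds$, compare the integrand to $\text{tr}(\nabla_G E(U_n)^T D_n)$ using Propositions \ref{diff} and \ref{der-diff} together with Assumption \ref{assum_lip} and \eqref{der-bound}, deduce that \eqref{back-cond} holds whenever $\tau$ is below the threshold $\frac{2(\eta-1)\text{tr}(\nabla_G E(U_n)^T D_n)}{(C_0C_2+L_0C_1)\vertiii D_n\vertiii^2}$, and then run the two-case backtracking argument ($m_n=0$ versus $t^{m_n-1}\tilde\tau_n$ violating the threshold). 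That final case analysis, the sign condition \eqref{cond1}, and the identity $\text{tr}(\nabla E(U_n)^T D_n)=\text{tr}(\nabla_G E(U_n)^T D_n)$ via $U_n^TD_n=0$ all match the paper and are correct.

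The one place your write-up does not close is precisely the ``delicate point'' you flag. Your splitting pairs the Lipschitz difference $\nabla E(U_{n+1}(s))-\nabla E(U_n)$ with the full derivative $U_{n+1}'(s)$, which costs $L_0C_1 s\vertiii D_n\vertiii\cdot\big(\vertiii D_n\vertiii + C_2 s\vertiii D_n\vertiii^2\big)$ and leaves an $O(s^2\vertiii D_n\vertiii^3)$ remainder; your proposed fix --- expanding $U_{n+1}'(s)=D_n+(U_{n+1}'(s)-D_n)$ inside that term --- still leaves a product of two $O(s)$ errors, so the cubic term does not disappear and cannot be ``absorbed'' without degrading the constant $C_0C_2+L_0C_1$ that appears in the statement \eqref{stepsize-lb}. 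The paper's decomposition avoids this entirely by putting the $s$-growth on only one factor of each product: it writes the integrand as the sum of $\text{tr}\big(\nabla E(U_{n+1}(s))^T(U_{n+1}'(s)-U_{n+1}'(0))\big)$, $\text{tr}\big((\nabla E(U_{n+1}(s))-\nabla E(U_n))^T U_{n+1}'(0)\big)$, and $\text{tr}\big(\nabla E(U_n)^T U_{n+1}'(0)\big)$, bounding the first by $C_0C_2 s\vertiii D_n\vertiii^2$ (full gradient bounded by $C_0$, derivative increment by Proposition \ref{der-diff}) and the second by $L_0C_1 s\vertiii D_n\vertiii^2$ (Lipschitz increment paired with the fixed direction $D_n$). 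With that regrouping the quadratic upper bound \eqref{hess-upbnd} is exact, and the remainder of your argument goes through verbatim.
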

\begin{proof}Â¡Â¡
Let $U_{n+1}(s)$ be the update of $U_n$ which is generated by our Algorithm CG-WY, or Algorithm CG-QR, or Algorithm CG-PD but with the step size $\tau_n$ being replaced by $s$, and $(U_{n+1})'(s)$ be the derivative of $U_{n+1}(s)$ with respect to $s$. Then for $s \ge 0$, we obtain from \eqref{der-bound} and Proposition \ref{der-diff} that
\begin{eqnarray*}
&&\text{tr}(\nabla E(U_{n+1}(s))^T((U_{n+1})'(s) - (U_{n+1})'(0))) \\
&\le& \vertiii \nabla E(U_{n+1}(s))\vertiii  \vertiii (U_{n+1})'(s) - (U_{n+1})'(0)\vertiii  \le C_0C_2s\vertiii D_n\vertiii ^2,
\end{eqnarray*}
and from Assumption \ref{assum_lip} and Proposition \ref{diff} that
\begin{eqnarray*}
&&\text{tr}((\nabla E(U_{n+1}(s)) - \nabla E(U_n))^T(U_{n+1})'(0)) \\
&\le& \vertiii \nabla E(U_{n+1}(s)) - \nabla E(U_n)\vertiii \vertiii (U_{n+1})'(0)\vertiii  \\
&\le& L_0 \vertiii U_{n+1}(s) - U_n\vertiii \vertiii D_n\vertiii  \le L_0C_1 s \vertiii D_n\vertiii ^2.
\end{eqnarray*}
Due to $(U_{n+1})'(0) = D_n$, we have
\begin{eqnarray*}
 (\nabla E(U_n))^T(U_{n+1})'(0) = \nabla E(U_n)^T D_n ,
\end{eqnarray*}
which together with  the fact $  {U_n}^T D_n   = 0$ leads to
\begin{eqnarray*}
&& \nabla E(U_n)^T(U_{n+1})'(0)  = \nabla E(U_n)^T\left(\big(I-U_n{U_n}^T\big)D_n\right) \\
&=& \big(\big(I-U_n{U_n}^T\big)\nabla E(U_n)\big)^TD_n  =  \nabla_G E(U_n)^TD_n.
\end{eqnarray*}
Then, for any $\tau>0$,
\begin{eqnarray*}
&& E(U_{n+1}(\tau)) - E(U_n) \\
&=& \int_0^{\tau} \text{tr}
 (\nabla E(U_{n+1}(s))^T(U_{n+1})'(s)) ds \\
&=& \int_0^{\tau} \text{tr}
 \left(\nabla E(U_{n+1}(s))^T((U_{n+1})'(s) - (U_{n+1})'(0))\right) ds \\
 && + \int_0^{\tau} \text{tr}(
 (\nabla E(U_{n+1}(s)) - \nabla E(U_n))^T(U_{n+1})'(0)) ds \\
 && + \int_0^{\tau} \text{tr}
 (\nabla E(U_n)^T(U_{n+1})'(0)) ds.
\end{eqnarray*}
Hence,
\begin{equation}\label{hess-upbnd}
\begin{split}
& E(U_{n+1}(\tau)) - E(U_n) \\
\le& \int_0^{\tau} (C_0C_2 + L_0C_1) s\vertiii D_n\vertiii ^2 + \text{tr}
 (\nabla_G E(U_n)^T D_n ) ds \\
=& \tau\text{tr}(\nabla_G E(U_n)^T D_n ) +
\frac{C_0C_2+L_0C_1}{2}\tau^2 \vertiii D_n\vertiii ^2.
\end{split}
\end{equation}
Therefore, from \eqref{hess-upbnd}, we have that if
\begin{equation}\label{lb}
\tau \le \frac{2(\eta-1)\text{tr}(\nabla_G E(U_n)^T D_n )}
{(C_0C_2+L_0C_1)\vertiii D_n\vertiii ^2},
\end{equation}
then $\tau $ satisfies \eqref{back-cond}.

Besides, since $\{U_n\}_{n\in \mathbb{N}_0}$ are generated by
Algorithm CG-WY, or Algorithm CG-QR, or Algorithm CG-PD, we have that $\tau_n$ satisfies \eqref{back-cond} from the requirement for $\tau_n$Â¡Â¡
in the Hessian based strategy. Â¡Â¡
We now divide our proof into two cases.

First, we consider the case that $\tilde{\tau}_n$ satisfies \eqref{back-cond}. In this case,
 $\tau_n = \tilde{\tau}_n$, and of course we have
\begin{equation*}
\tau_n \ge \min\left(\tilde{\tau}_n, \frac{2t(\eta-1)\text{tr}(\nabla_G E(U_n)^T D_n )}{(C_0C_2+L_0C_1)\vertiii D_n\vertiii ^2}\right).
\end{equation*}

Then, we consider the case that $\tilde{\tau}_n$ does not satisfy \eqref{back-cond}. In this case, we must do the backtracking which implies the previous step size $\tau_nt^{-1}$ does not satisfy \eqref{back-cond}. And we claim that $\tau_nt^{-1}$ also does not satisfy \eqref{lb}, or else $\tau_nt^{-1}$ will satisfy \eqref{back-cond} and hence a contradiction. As a result
    \begin{equation*}
\tau_nt^{-1} \ge \frac{2(\eta-1)\text{tr}(\nabla_G E(U_n)^T D_n )}
{(C_0C_2+L_0C_1)\vertiii D_n\vertiii ^2},
\end{equation*}
which indicates that
\begin{equation*}
\tau_n \ge \frac{2t(\eta-1)\text{tr}(\nabla_G E(U_n)^T D_n )}
{(C_0C_2+L_0C_1)\vertiii D_n\vertiii ^2}.
\end{equation*}
Therefore,
\begin{equation*}
\tau_n \ge \min\left(\tilde{\tau}_n, \frac{2t(\eta-1)\text{tr}(\nabla_G E(U_n)^T D_n )}{(C_0C_2+L_0C_1)\vertiii D_n\vertiii ^2}\right).
\end{equation*}
This completes the proof.

\end{proof}


\begin{lemma}\label{lema3}
Let Assumptions \ref{assum_lip} and \ref{assum_hess} hold true. Assume that  $\{U_n\}_{n\in \mathbb{N}_0}$ is a sequence generated by
Algorithm CG-WY, or Algorithm CG-QR, or Algorithm CG-PD.  If
$[U_0] \in B([U^*], \delta_2)\cap \mathcal{L}$  and
\begin{equation*}
\liminf_{n\to\infty} \vertiii \nabla_G E(U_n)\vertiii  > 0,
\end{equation*}
then
\begin{equation*}
\lim_{n\to\infty} \beta_n = 0.
\end{equation*}
\end{lemma}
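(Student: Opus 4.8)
The plan is to prove the distance between consecutive iterates tends to zero, $\vertiii U_n - U_{n-1}\vertiii\to 0$, and then read off $\beta_n\to 0$ directly from the PRP formula \eqref{cgprp}. To set up, I would first invoke Lemma \ref{lema1}: since $[U_0]\in B([U^*],\delta_2)\cap\mathcal{L}$, every iterate satisfies $[U_n]\in B([U^*],\delta_2)\cap\mathcal{L}\subset B([U^*],\delta_1)$, so Assumption \ref{assum_hess} applies at each $U_n$ with $D_n\in\mathcal{T}_{[U_n]}\mathcal{G}^N_{N_g}$. In particular $\text{Hess}_GE(U_n)[D_n,D_n]\ge\nu_1\vertiii D_n\vertiii^2>0$, so \eqref{pos-hess} holds and $\tilde{\tau}_n$ is given by the first branch of \eqref{stepsize}. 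Writing $g_n:=-\text{tr}(\nabla_G E(U_n)^TD_n)\ge 0$ and using $\tau_n\le\tilde{\tau}_n\le g_n/\text{Hess}_GE(U_n)[D_n,D_n]\le g_n/(\nu_1\vertiii D_n\vertiii^2)$, I obtain the key inequality $\tau_n\vertiii D_n\vertiii^2\le g_n/\nu_1$.

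Next I would extract the energy-decrease information. The backtracking condition \eqref{back-cond} together with $\text{tr}(\nabla_G E(U_n)^TD_n)\le 0$ shows that $\{E(U_n)\}$ is non-increasing; as it is bounded below (the Kohn--Sham energy is bounded below on $\mathcal{M}^N_{N_g}$, or, equivalently, $E$ is continuous on the compact set $\mathcal{L}\cap B([U^*],\delta_2)$), it converges, and summing $\eta\tau_n g_n\le E(U_n)-E(U_{n+1})$ over $n$ gives $\sum_n\tau_n g_n<\infty$, hence $\tau_n g_n\to 0$. Combining this with the key inequality through the identity
\[
(\tau_n\vertiii D_n\vertiii)^2=\tau_n\cdot\big(\tau_n\vertiii D_n\vertiii^2\big)\le\frac{1}{\nu_1}\,\tau_n g_n\longrightarrow 0,
\]
I conclude $\tau_n\vertiii D_n\vertiii\to 0$, and therefore $\vertiii U_{n+1}-U_n\vertiii\le C_1\tau_n\vertiii D_n\vertiii\to 0$ by Proposition \ref{diff}.

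To finish, I would estimate the PRP quotient \eqref{cgprp}. Cauchy--Schwarz (Lemma \ref{lemma222}), the Lipschitz bound \eqref{lip}, and the uniform bound on $\vertiii\nabla_G E(\cdot)\vertiii$ coming from \eqref{der-bound} give a constant $\tilde{C}$ with $|\text{tr}((\nabla_G E(U_n)-\nabla_G E(U_{n-1}))^T\nabla_G E(U_n))|\le\tilde{C}\,\vertiii U_n-U_{n-1}\vertiii$; meanwhile the hypothesis $\liminf_n\vertiii\nabla_G E(U_n)\vertiii>0$ provides $\gamma>0$ with $\vertiii\nabla_G E(U_{n-1})\vertiii\ge\gamma$ for all large $n$, so the denominator stays bounded away from $0$. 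Hence $|\beta_n|\le(\tilde{C}/\gamma^2)\vertiii U_n-U_{n-1}\vertiii\to 0$.

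The step I expect to be the main obstacle is the combination in the second paragraph. Neither $g_n\to 0$ nor $\tau_n\to 0$ holds in isolation — the sequence need not be stationary, and Lemma \ref{lema-lbd} bounds the step sizes below — so one must notice that the monotone-energy estimate $\tau_n g_n\to 0$ and the Hessian-based step size bound $\tau_n\vertiii D_n\vertiii^2\le g_n/\nu_1$ multiply to give precisely $\tau_n\vertiii D_n\vertiii\to 0$. A related subtlety is verifying that the first branch of \eqref{stepsize} is genuinely in force: this is exactly where the neighbourhood version of Assumption \ref{assum_hess} (through Lemma \ref{lema1}) is needed, since if $\tilde{\tau}_n$ were the capped value $\theta/\vertiii D_n\vertiii$ the key inequality would fail.
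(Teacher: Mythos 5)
Your proposal is correct and follows essentially the same route as the paper's proof: the same chain $\tau_n\vertiii D_n\vertiii^2\le -\mathrm{tr}(\nabla_G E(U_n)^TD_n)/\nu_1$ combined with the summability of $\eta\tau_n(-\mathrm{tr}(\nabla_G E(U_n)^TD_n))$ from the backtracking condition yields $\vertiii U_{n+1}-U_n\vertiii\to 0$, and then the Lipschitz bound \eqref{lip} and the positive lower bound on the denominator give $\beta_n\to 0$ (the paper proves the slightly stronger $\sum_n\vertiii U_{n+1}-U_n\vertiii^2<\infty$, but uses it only for the same limit). One small remark: your worry about the capped branch $\theta/\vertiii D_n\vertiii$ is unnecessary, since $\tilde{\tau}_n$ is defined as a minimum and hence satisfies $\tilde{\tau}_n\le -\mathrm{tr}(\nabla_G E(U_n)^TD_n)/\mathrm{Hess}_GE(U_n)[D_n,D_n]$ whichever argument attains it; what Lemma \ref{lema1} is genuinely needed for is to guarantee \eqref{pos-hess} so that this first-case formula applies at all.
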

\begin{proof}
We first show that
\begin{equation}\label{lessthaninf}
\sum_{n, D_n\ne 0}\eta\tau_n(-\text{tr}(\nabla_G E(U_n)^T
  D_n)) < \infty.
\end{equation}
We obtain from \eqref{back-cond} that
\begin{equation*}
E(U_n) - E(U_{n+1}) \geq \eta\tau_n(-\text{tr}(\nabla_G E(U_n)^T D_n )).
\end{equation*}
Since $[U^*]$ is the minimizer of \eqref{dis-emin} in $B([U^*], \delta_1)$,
$[U_n] \in B([U^*], \delta_1)$
and the energy is non-increasing during the iteration, we have
\begin{align*}
&\sum_{n, D_n\ne 0}
\eta\tau_n(-\text{tr}(\nabla_G E(U_n)^T D_n )) \\
\le & \ E(U_0) - \lim_{n\to\infty} E(U_n) \le E(U_0) - E(U^*) < \infty.
\end{align*}
We get from Proposition \ref{diff} that
\begin{equation*}
\sum_{n=1}^{\infty} \vertiii U_{n+1} - U_n\vertiii ^2 \le C_1^2\sum_{n=1}^{\infty}(\tau_n)^2\vertiii D_n\vertiii ^2
\le  C_1^2 \sum_{n,D_n\ne0} \tilde{\tau}_n\vertiii D_n\vertiii ^2\tau_n.
\end{equation*}
Note that Assumption \ref{assum_hess} indicates that for $D_n \ne 0$,
\begin{eqnarray*}
\tilde{\tau}_n \vertiii D_n\vertiii ^2 &\le& -\frac{\text{tr}(\nabla_G E(U_n)^T D_n )}{\text{Hess}_GE
(U_n)[D_n, D_n]} \vertiii D_n\vertiii ^2 \\
 & \le & \frac{1}{\nu_1} (-\text{tr}(\nabla_G E(U_n)^T D_n )),
\end{eqnarray*}
which together with \eqref{lessthaninf} leads to
\begin{equation*}
\begin{split}
&\sum_{n=1}^{\infty} \vertiii U_{n+1} - U_n\vertiii ^2\\
\le&  \frac{C_1^2}{\nu_1}\sum_{n, D_n\ne 0} (-\text{tr}(\nabla_G E(U_n)^T D_n ))\tau_n \\
\leq & \frac{C_1^2}{\nu_1\eta}\sum_{n, D_n\ne 0}
(-\text{tr}(\nabla_G E(U_n)^T D_n ))\eta\tau_n < \infty.
\end{split}
\end{equation*}
Thus we arrive at
\begin{equation}\label{cvg-u}
\lim_{n\to\infty} \vertiii U_{n+1} - U_n\vertiii ^2 = 0.
\end{equation}
We also see from Assumption
\ref{assum_lip} that $\vertiii \nabla_G E(U_n)\vertiii $ are bounded. Hence we get from \eqref{lip} that
\begin{equation*}
\lim_{n\to\infty} \text{tr}\left( (\nabla_G E(U_n) - \nabla_G E(U_{n-1}))^T\nabla_G
              E(U_n) \right) = 0.
\end{equation*}
If $\liminf\limits_{n\to\infty} \vertiii \nabla_G E(U_n)\vertiii  > 0$, then there exits $\delta>0$, such that
\begin{equation*}
\vertiii \nabla_G E(U_n)\vertiii  > \delta, \ \ \forall \ \ n.
\end{equation*}
Consequently, using the definition of $\beta_n$ \eqref{cgprp}, we obtain
\begin{eqnarray*}
&&|\text{tr}\left( (\nabla_G E(U_n) - \nabla_G E(U_{n-1}))^T\nabla_G E(U_n)\right)| \\
&=&\vertiii \nabla_G E(U_{n-1})\vertiii ^2 |\beta_n| \ge |\beta_n|\delta^2
\end{eqnarray*}
and conclude that $\lim\limits_{n\to\infty} \beta_n = 0$.
\end{proof}

Now, we state and prove our main theorem.
\begin{theorem}\label{thm-conv}
Let Assumptions \ref{assum_lip} and \ref{assum_hess} hold true.  For the sequence  $\{U_n\}_{n\in \mathbb{N}_0}$  generated by
Algorithm CG-WY, or Algorithm CG-QR, or Algorithm CG-PD. If $[U_0] \in B([U^*], \delta_2)\cap \mathcal{L}$,   then
\begin{equation}\label{eq-conv}
\lim_{n\to\infty} \textup{dist}([U_n], [U^*]) = 0,
\end{equation}
which means that $[U_n]$ converge to $[U^*]$ on the Grassmann manifold $\mathcal{G}^N_{N_g}$.

Consequently,
\begin{equation}\label{conc-der}
\lim_{n\to\infty} \vertiii \nabla_G E(U_n)\vertiii  = 0.
\end{equation}
\end{theorem}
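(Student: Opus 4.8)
The plan is to argue by contradiction: I would assume $\liminf_{n\to\infty}\vertiii\nabla_G E(U_n)\vertiii>0$, so that $\vertiii\nabla_G E(U_n)\vertiii>\delta$ for some $\delta>0$ and all $n$, and derive a contradiction with the summability established inside the proof of Lemma \ref{lema3}. Note first that, by Lemma \ref{lema1}, every iterate lies in $B([U^*],\delta_2)\cap\mathcal{L}\subset B([U^*],\delta_1)$, so Assumption \ref{assum_hess} applies at each $U_n$; moreover, using \eqref{lip}, $\nabla_G E(U^*)=0$ and Lemma \ref{lema-dist} exactly as in the proof of Lemma \ref{lema1}, the quantity $\vertiii\nabla_G E(U_n)\vertiii$ is bounded above by a constant $C$ (one may take $C=L_1\delta_1$). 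Under the contradiction hypothesis, Lemma \ref{lema3} gives $\beta_n\to0$.

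Next I would bound the search directions from above and $-\text{tr}(\nabla_G E(U_n)^TD_n)$ from below. Write $g_n=\nabla_G E(U_n)$; since $g_n$ belongs to the tangent space $\mathcal{T}_{[U_n]}\mathcal{G}^N_{N_g}$ by \eqref{gra}, we have $U_n^Tg_n=0$. The vector $F_n$ produced in step 3 before the sign flip in step 5 then satisfies $\vertiii F_n\vertiii\le\vertiii g_n\vertiii+|\beta_n|\vertiii F_{n-1}\vertiii\le C+|\beta_n|\vertiii F_{n-1}\vertiii$; the sign flip preserves norms and the tangent-space projection in step 4 does not increase them, so, since $|\beta_n|\le\tfrac12$ for all large $n$, this recursion shows that $\vertiii F_n\vertiii$, and hence $\vertiii D_n\vertiii$, is eventually bounded by a constant $M$. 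Using $U_n^Tg_n=0$ again, the search direction delivered by step 5 satisfies $-\text{tr}(g_n^TD_n)=\bigl|\,\vertiii g_n\vertiii^2-\beta_n\text{tr}(g_n^TF_{n-1})\,\bigr|\ge\vertiii g_n\vertiii^2-|\beta_n|CM\ge\delta^2-|\beta_n|CM$, which, since $\beta_n\to0$, is $\ge\delta^2/2=:\kappa>0$ for all large $n$; in particular $D_n\ne0$ for such $n$.

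Now, for $n$ large, $D_n\ne0$ together with $[U_n]\in B([U^*],\delta_1)$ implies $\text{Hess}_GE(U_n)[D_n,D_n]\ge\nu_1\vertiii D_n\vertiii^2>0$, so \eqref{pos-hess} holds and the first branch of \eqref{stepsize} applies; combined with the upper bound in \eqref{pos-bnd} and $\vertiii D_n\vertiii\le M$, this yields $\tilde{\tau}_n\ge\min\bigl(\kappa/(\nu_2M^2),\,\theta/M\bigr)>0$. Plugging $-\text{tr}(g_n^TD_n)\ge\kappa$ and $\vertiii D_n\vertiii\le M$ into Lemma \ref{lema-lbd} (and noting $(\eta-1)\text{tr}(g_n^TD_n)=(1-\eta)(-\text{tr}(g_n^TD_n))\ge(1-\eta)\kappa$), I obtain a uniform lower bound $\tau_n\ge c>0$ for all large $n$, whence $\eta\tau_n(-\text{tr}(\nabla_G E(U_n)^TD_n))\ge\eta c\kappa>0$ for all large $n$. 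This contradicts the fact, established inside the proof of Lemma \ref{lema3}, that $\sum_{n,\,D_n\ne0}\eta\tau_n(-\text{tr}(\nabla_G E(U_n)^TD_n))<\infty$. Hence $\liminf_{n\to\infty}\vertiii\nabla_G E(U_n)\vertiii=0$.

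Finally I would upgrade this conclusion. Choosing a subsequence with $\vertiii\nabla_G E(U_{n_k})\vertiii\to0$ and using that the closed ball $B([U^*],\delta_1)$ is compact in the finite-dimensional Grassmann manifold, a further subsequence converges to some $[\bar U]$ with $\nabla_G E(\bar U)=0$; by the uniqueness part of Lemma \ref{lema-uniq}, $[\bar U]=[U^*]$, so $E(U_{n_k})\to E(U^*)$ along that sub-subsequence. Since $\{E(U_n)\}$ is non-increasing (by \eqref{back-cond} and \eqref{cond1}) and bounded below by $E(U^*)$, it converges, and its limit must be $E(U^*)$; Lemma \ref{lema-uniq} then yields $\text{dist}([U_n],[U^*])\to0$, which is \eqref{eq-conv}. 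Then \eqref{conc-der} follows from \eqref{lip}, $\nabla_G E(U^*)=0$ and Lemma \ref{lema-dist}: writing $\text{dist}([U_n],[U^*])=\vertiii U_n-U^*P_n\vertiii$ with $P_n\in\mathcal{O}^{N\times N}$, we get $\vertiii\nabla_G E(U_n)\vertiii=\vertiii\nabla_G E(U_n)-\nabla_G E(U^*P_n)\vertiii\le L_1\vertiii U_n-U^*P_n\vertiii\to0$. The step I expect to be the main obstacle is the second paragraph: tracking the tangent-space projection of step 4 and the sign flip of step 5 carefully enough to obtain \emph{simultaneously} the uniform upper bound $\vertiii D_n\vertiii\le M$ and the uniform positive lower bound $-\text{tr}(\nabla_G E(U_n)^TD_n)\ge\kappa$, since both are needed for \eqref{stepsize} and Lemma \ref{lema-lbd} to force the step sizes away from zero.
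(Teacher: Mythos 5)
Your proposal is correct and follows essentially the same route as the paper's proof: the same contradiction argument via Lemma \ref{lema3} ($\beta_n\to0$, boundedness of $F_n$ and $D_n$, the lower bound $-\text{tr}(\nabla_G E(U_n)^TD_n)\ge\delta^2/2$, and a uniform positive lower bound on $\tau_n(-\text{tr}(\nabla_G E(U_n)^TD_n))$ contradicting the summability established there), followed by the same compactness-plus-uniqueness upgrade through Lemma \ref{lema-uniq} and the monotonicity of the energy. The only cosmetic differences are that you bound $\tilde{\tau}_n$ and $-\text{tr}(\nabla_G E(U_n)^TD_n)$ separately where the paper bounds their product directly, and you spell out the deduction of \eqref{conc-der} from \eqref{eq-conv} via \eqref{lip}, which the paper leaves implicit.
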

\begin{proof}
We prove our conclusions by two steps.

(1) First, we prove that under our conditions, there holds
\begin{equation}\label{conv-der}
\liminf_{n\to\infty} \vertiii \nabla_G E(U_n)\vertiii  = 0.
\end{equation}

We prove \eqref{conv-der} by contradiction. Assume that $\vertiii \nabla_G E(U_n)\vertiii \ge \delta, \forall \ n$ for
some $\delta > 0$.
We get from Lemma \ref{lema3} that
\begin{equation*}
\lim_{n\to\infty} \beta_n = 0.
\end{equation*}
Since
\begin{equation*}
\vertiii F_n\vertiii  \le \vertiii \nabla_G E(U_n)\vertiii  + |\beta_n|\vertiii F_{n-1}\vertiii ,
\end{equation*}
we obtain that $\vertiii F_n\vertiii $ are bounded from the fact that $\vertiii \nabla_G E(U_n)\vertiii $ are bounded.
Due to $ {U_n}^T \nabla_G E(U_n)  = 0$, we have
\begin{eqnarray*}
D_n &=& (I-U_n{U_n}^T) F_n \\
        &=& (I-U_n{U_n}^T) (-\nabla_G E(U_n) + \beta_nF_{n-1}) \\
        &=& -\nabla_G E(U_n) + \beta_n(F_{n-1} - U_n
      ({U_n}^TF_{n-1})),
\end{eqnarray*}
thus
\begin{equation*}
\begin{split}
& |\text{tr}(\nabla_G E(U_n)^T D_n )| \\
 = &|\text{tr} (\nabla_G E(U_n)^T (-\nabla_G E(U_n) + \beta_n(F_{n-1} - U_n
      ({U_n}^TF_{n-1}))))| \\
\ge& \vertiii \nabla_G E(U_n)\vertiii ^2 - |\beta_n|\vertiii \nabla_G E(U_n)\vertiii \vertiii F_{n-1} - U_n
      ({U_n}^TF_{n-1})\vertiii .
\end{split}
\end{equation*}
Note that ${U_n}^TU_n = I_N$ implies
\begin{align*}
\vertiii F_{n-1}\vertiii ^2 & = \vertiii F_{n-1} - U_n({U_n}^TF_{n-1}) +
U_n({U_n}^TF_{n-1}) \vertiii ^2 \\
& = \vertiii F_{n-1} - U_n({U_n}^TF_{n-1}) \vertiii ^2 +
\vertiii U_n({U_n}^TF_{n-1}) \vertiii ^2,
\end{align*}
we obtain $\vertiii F_{n-1} - U_n({U_n}^TF_{n-1})\vertiii  \le \vertiii F_{n-1}\vertiii $ and
arrive at
\begin{equation*}
|\text{tr} (\nabla_G E(U_n)^T D_n)| \ge
\vertiii \nabla_G E(U_n)\vertiii ^2 - |\beta_n|\vertiii \nabla_G E(U_n)\vertiii \vertiii F_{n-1}\vertiii .
\end{equation*}
Furthermore, since $\lim\limits_{n\to\infty} \beta_n = 0$, $\vertiii \nabla_G E(U_n)\vertiii $
and $\vertiii F_{n-1}\vertiii $
are bounded, we get
\begin{equation}\label{up}
|\text{tr} (\nabla_G E(U_n)^T D_n)| \ge \vertiii \nabla_G E(U_n)\vertiii ^2/2 \ge \delta^2/2
\end{equation}
provided $n\gg 1$. Since $\vertiii F_n\vertiii $ are bounded, we see that $\vertiii D_n\vertiii $ are
bounded. We conclude from Assumption \ref{assum_hess} that there is a constant $C_3$, such that
\begin{equation}\label{down}
\text{Hess}_GE(U_n)[D_n, D_n] \le \nu_2\vertiii D_n\vertiii ^2 \le C_3.
\end{equation}
Combining \eqref{up} and \eqref{down}, we have that for $D_n\ne 0$ and $n\gg 1$,
\begin{equation*}
\frac{(\text{tr} (\nabla_G E(U_n)^T D_n))^2}
{\text{Hess}_GE(U_n)[D_n, D_n]} \ge \frac{\delta^4}{4C_3}
\end{equation*}
and
\begin{equation*}
\frac{\theta}{\vertiii D_n\vertiii }(-\text{tr} (\nabla_G E(U_n)^T D_n))
\ge \frac{\delta^2\theta}{2}\sqrt{\frac{\nu_2}{C_3}}.
\end{equation*}
From the definition of $\tilde{\tau}_n$ \eqref{stepsize}, for $D_n\ne 0$ and $n\gg 1$,
there holds
\begin{equation*}
\tilde{\tau}_n(-\text{tr} (\nabla_G E(U_n)^T D_n)) \ge
\min\left(\frac{\delta^4}{4C_3}, \frac{\delta^2\theta}{2}
\sqrt{\frac{\nu_2}{C_3}}\right).
\end{equation*}
Thus we get from \eqref{stepsize-lb} that
\begin{equation*}
\tau_n(-\text{tr} (\nabla_G E(U_n)^T D_n)) \ge
\min\left(\frac{\delta^4}{4C_3}, \frac{\delta^2\theta}{2}\sqrt{\frac{\nu_2}{C_3}},
\frac{t(1-\eta)\delta^4\nu_2}{2(C_0C_2+L_0C_1)C_3}\right),
\end{equation*}
which leads to
\begin{equation*}
\sum_{n, D_n\ne 0}\eta\tau_n(-\text{tr} (\nabla_G E(U_n)^T D_n)) = \infty.
\end{equation*}
This contradicts with \eqref{lessthaninf}. Therefore, we arrive at
\begin{equation*}
\liminf_{n\to\infty} \vertiii \nabla_G E(U_n)\vertiii  = 0.
\end{equation*}

(2) Then, we turn to prove our main conclusion  \eqref{eq-conv}.

We obtain from \eqref{conv-der} that there exists a subsequence $\{U_{n_k}\}_{k=1}^\infty$ of
$\{U_n\}_{n=1}^\infty$,
such that
\begin{equation}\label{conv-sub-der}
\lim_{k\to\infty} \vertiii \nabla_G E(U_{n_k})\vertiii  = 0.
\end{equation}
Furthermore, we can prove
\begin{equation}\label{conv-sub}
\lim_{k\to\infty} \text{dist}([U_{n_k}],
[U^*]) = 0.
\end{equation}
Let us prove \eqref{conv-sub} by contradiction. Assume that
$\lim\limits_{k\to\infty} \text{dist}([U_{n_k}],
[U^*]) \ne 0$, then there exists $\tilde{\delta} > 0$ and a subsequence
$\{U_{n_{k_j}}\}_{j=1}^\infty$ of $\{U_{n_k}\}_{k=1}^\infty$, such that
\begin{equation*}
\text{dist}([U_{n_{k_j}}], [U^*]) \ge \tilde{\delta}, \ \forall \ j\ge 0.
\end{equation*}
We obtain from Lemma \ref{lema-dist} that for each $j$, there exists
$P_{n_{k_j}} \in \mathcal{O}^{N\times N}$, such that
\begin{equation*}
\vertiii  U_{n_{k_j}}P_{n_{k_j}} - U^*\vertiii  = \text{dist}([U_{n_{k_j}}],
[U^*]) \ge \tilde{\delta}.
\end{equation*}
Since $\{U_{n_{k_j}}P_{n_{k_j}}\}_{j=1}^\infty$ are bounded and $\mathcal{M}^N_{N_g}$ is compact,
we get that there exists a $\bar{U}$, and a subsequence of $\{U_{n_{k_j}}P_{n_{k_j}}\}_{j=1}^\infty$,
for simplicity of notation, we denote the subsequence also by $\{U_{n_{k_j}}P_{n_{k_j}}\}_{j=1}^\infty$,
such that $\lim\limits_{j\to\infty}\vertiii U_{n_{k_j}}P_{n_{k_j}} - \bar{U}\vertiii  = 0$.
Then we obtain from \eqref{conv-sub-der} that
\begin{equation*}
\begin{split}
  \lim_{j\to\infty} \vertiii \nabla_G E(U_{n_{k_j}}P_{n_{k_j}})\vertiii  & =
      \lim_{j\to\infty} \vertiii \nabla_G E(U_{n_{k_j}})P_{n_{k_j}}\vertiii  \\
    & = \lim_{j\to\infty} \vertiii \nabla_G E(U_{n_{k_j}})\vertiii = 0,
\end{split}
\end{equation*}
which implies $\nabla_G E(\bar{U}) = 0$ since the Lipschitz continuous condition of the gradient
in Assumption \ref{assum_lip}. We get from $[U_{n_{k_j}}P_{n_{k_j}}] \in
B([U^*], \delta_2)$ that $\bar{U} \in B([U^*], \delta_2)$,
and we conclude from Lemma \ref{lema-uniq} that  $[\bar{U}] = [U^*]$,
this contradicts with  $\text{dist}([U_{n_{k_j}}], [U^*])\ge \tilde{\delta}$.
Therefore we have \eqref{conv-sub}, and
\begin{equation*}
\lim_{j\to\infty} E(U_{n_{k_j}}) = \lim_{j\to\infty} E(U_{n_{k_j}}P_{n_{k_j}}) = E(U^*).
\end{equation*}
We observe from the proof of Lemma
\ref{lema3} that the energy is non-increasing during the iteration and bounded below,
then we get
\begin{equation}\label{conv-energy}
\lim_{n\to\infty} E(U_n) = E(U^*).
\end{equation}
Finally, we obtain \eqref{eq-conv} from Lemma \ref{lema-uniq}.

\eqref{conc-der} is just a consequence of \eqref{eq-conv}.
\end{proof}

\section{A restarted version}\label{sec-modalg}

In iteration methods, restarting approach is a commonly used strategy which may improve the performance of the methods. Here, we also apply this strategy to our conjugate gradient algorithms. In fact, the restarting approach has been used in nonlinear conjugate gradient methods to cure the problem of jamming, refer to \cite{HZ, Powell, Smith93} and the references therein for more information.
The restarting approach is to reset the search
direction when it is necessary. For the conjugate gradient type method, the restarting approach means to set the conjugate parameter $\beta_n = 0$.
We now propose an new indicator to tell us when we need to restart the calculation.

We define the relative change of the residual $dg_n$ by
\begin{equation}\label{re-change}
dg_n=  \Big| \frac{\vertiii \nabla_G E(U_n)\vertiii - \vertiii \nabla_G E(U_{n-1})\vertiii }{\vertiii \nabla_G E(U_{n-1})\vertiii} \Big|.
\end{equation}
We consider to use the average of $dg_n$ in 3 successive iterations as the indicator $\zeta_n$, that is,
\begin{equation}\label{rest-ind}
\zeta_n=\frac{dg_n+dg_{n-1}+dg_{n-2}}{3}.
\end{equation}
 Let $g_{tol}\in(0,1)$ be a given parameter. When $\zeta_n<g_{tol}$,
  we set $\beta_n=0$ to restart our conjugate gradient algorithms with initial search direction $F_{n}=-\nabla_G E(U_n)$.
  Based on this indicator, we propose our restarted algorithm.

\begin{algorithm}
\caption{Restarted conjugate gradient method}
 Given $\epsilon, \theta, t, \eta ,g_{tol} \in (0,1)$, 
  initial data $U_0, \ s.t. \   U_0^TU_0  = I_N$, $F_{-1}= 0$, $dg_{-1}= 0$, $dg_0= 0, \zeta_0=0$,
  calculate the gradient $\nabla_G E(U_0)$, let $n = 0$\;
 \While{$\vertiii \nabla_G E(U_n) \vertiii> \epsilon$}{

  Calculate the conjugate gradient parameter
  \begin{equation*}
  \beta_n=   \begin{cases}

   \frac{\text{tr}( (\nabla_G E(U_n) - \nabla_G E(U_{n-1}))^T\nabla_G
              E(U_n) )}{\vertiii \nabla_G E(U_{n-1})\vertiii^2} &\zeta_n\geq g_{tol},\\

   0 &\zeta_n<g_{tol}.

   \end{cases}
  \end{equation*}
      Let $F_n = -\nabla_G E(U_n)+\beta_nF_{n-1}$\;
  Project the search direction to the tangent space of $U_n$:
      $D_n = F_n - U_n( U_n^TF_n)$\;
  Set $F_n = -F_n\mbox{sign}(\text{tr} (\nabla_G E(U_n)^TD_n
      )$, \\ $D_n = -D_n\text{sign}(\mbox{tr}(\nabla_G
      E(U_n)^TD_n )$\;
  Calculate the step size $\tau_n$ by the {\bf Hessian based strategy$(\theta, t, \eta)$};

 Set $U_{n+1}=\text{ortho}(U_n,D_n, \tau_n)$;

       Let $n=n+1$, calculate the gradient $\nabla_G E(U_n)$, the relative change of the residual $dg_n$ by \eqref{re-change} and
       the  indicator $\zeta_n$ by \eqref{rest-ind}\;
 }
\end{algorithm}

We denote the restarted version of algorithms CG-WY, CG-QR and CG-PD as rCG-WY, rCG-QR and rCG-PD, respectively.

Note that the only difference between the restarted algorithms and the original ones is the choice of the parameter $\beta_n$ while the particular form of $\beta_n$ is only used in Lemma \ref{lema3}. We can easily see that every conclusion we get before Lemma \ref{lema3} also holds true for the restarted algorithms. 
We see from Lemma \ref{lema3} and its proof that the restart procedure does not affect the proof and the conclusion of Lemma \ref{lema3}. More specifically, the condition that $
|\text{tr}( (\nabla_G E(U_n) - \nabla_G E(U_{n-1}))^T\nabla_G E(U_n))|\ge |\beta_n|\delta^2$ in the proof of Lemma \ref{lema3}
 always holds true at each iteration for both $\beta_n=0$ and
\begin{equation*}
\beta_n=\frac{\text{tr}( (\nabla_G E(U_n) - \nabla_G E(U_{n-1}))^T\nabla_G
              E(U_n) )}{\vertiii \nabla_G E(U_{n-1})\vertiii^2}.
\end{equation*}
Consequently, we can conclude the convergence result of our restarted algorithms as the following theorem.
\begin{theorem}\label{thm-convm}
Let Assumptions \ref{assum_lip} and \ref{assum_hess} hold true.  For the sequence  $\{U_n\}_{n\in \mathbb{N}_0}$  generated by
Algorithm rCG-WY, or Algorithm rCG-QR, or Algorithm rCG-PD. If $[U_0] \in B([U^*], \delta_2)\cap \mathcal{L}$,  then
\begin{equation}\label{eq-convm}
\lim_{n\to\infty} \textup{dist}([U_n], [U^*]) = 0,
\end{equation}
which means that $[U_n]$ converge to $[U^*]$ on the Grassmann manifold $\mathcal{G}^N_{N_g}$.

\end{theorem}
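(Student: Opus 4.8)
The plan is to follow the proof of Theorem~\ref{thm-conv} almost verbatim, the only point requiring attention being that the PRP parameter \eqref{cgprp} has been replaced by the restarted rule, which may set $\beta_n=0$. As already noted after the statement of the restarted algorithm, none of the auxiliary results preceding Lemma~\ref{lema3} — Propositions~\ref{diff} and \ref{der-diff}, Lemma~\ref{lema1}, and Lemma~\ref{lema-lbd} — uses the particular form of $\beta_n$; they only use that $D_n\in\mathcal{T}_{[U_n]}\mathcal{G}^N_{N_g}$, that $\text{tr}(\nabla_G E(U_n)^TD_n)\le 0$ (guaranteed by step~5 of the restarted algorithm), and the Hessian based step size rule with its Armijo condition \eqref{back-cond}. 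Hence, for $[U_0]\in B([U^*],\delta_2)\cap\mathcal{L}$ we still get $[U_n]\in B([U^*],\delta_2)\cap\mathcal{L}$ for all $n$, the energy is non-increasing, the summability relation \eqref{lessthaninf} holds, and the step sizes obey the lower bound \eqref{stepsize-lb}.

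First I would re-establish the analogue of Lemma~\ref{lema3} for the restarted algorithms. Inspecting its proof, the PRP formula enters only at the very last step, where, from $\lim_{n\to\infty}\text{tr}((\nabla_G E(U_n)-\nabla_G E(U_{n-1}))^T\nabla_G E(U_n))=0$ and $\vertiii\nabla_G E(U_n)\vertiii>\delta$, one deduces $\lim_{n\to\infty}\beta_n=0$ via the identity $|\text{tr}((\nabla_G E(U_n)-\nabla_G E(U_{n-1}))^T\nabla_G E(U_n))|=\vertiii\nabla_G E(U_{n-1})\vertiii^2|\beta_n|\ge\delta^2|\beta_n|$. For the restarted rule this identity is replaced by the inequality $|\text{tr}((\nabla_G E(U_n)-\nabla_G E(U_{n-1}))^T\nabla_G E(U_n))|\ge\delta^2|\beta_n|$, which is trivially true when $\beta_n=0$ and is exactly the above equality when $\zeta_n\ge g_{tol}$. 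Since the rest of the proof of Lemma~\ref{lema3} is untouched, we again obtain $\lim_{n\to\infty}\beta_n=0$ whenever $\liminf_{n\to\infty}\vertiii\nabla_G E(U_n)\vertiii>0$.

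With this in hand I would run part~(1) of the proof of Theorem~\ref{thm-conv} word for word: assuming $\vertiii\nabla_G E(U_n)\vertiii\ge\delta$ for all $n$, the relation $D_n=-\nabla_G E(U_n)+\beta_n(F_{n-1}-U_n(U_n^TF_{n-1}))$, the boundedness of $\vertiii F_n\vertiii$, and $\beta_n\to0$ give $|\text{tr}(\nabla_G E(U_n)^TD_n)|\ge\delta^2/2$ for $n\gg1$; Assumption~\ref{assum_hess} bounds $\text{Hess}_GE(U_n)[D_n,D_n]$ from above; and then the definition \eqref{stepsize} of $\tilde{\tau}_n$ together with the step-size lower bound \eqref{stepsize-lb} forces $\sum_{n,D_n\ne0}\eta\tau_n(-\text{tr}(\nabla_G E(U_n)^TD_n))=\infty$, contradicting \eqref{lessthaninf}. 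Thus $\liminf_{n\to\infty}\vertiii\nabla_G E(U_n)\vertiii=0$. Finally, part~(2) of the proof of Theorem~\ref{thm-conv} — extracting a subsequence along which $\vertiii\nabla_G E(U_{n_k})\vertiii\to0$, using compactness of $\mathcal{M}^N_{N_g}$ and Lemma~\ref{lema-uniq} to identify the limit of a suitable sub-subsequence with $[U^*]$, and invoking the monotone convergence of the energy — carries over unchanged and yields \eqref{eq-convm}.

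I do not expect a genuine obstacle here; the one thing to be careful about is confirming that the restart does not disturb the energy-descent bookkeeping behind \eqref{lessthaninf} nor the step-size lower bound, but since those rest solely on the Armijo condition \eqref{back-cond} and on $\text{tr}(\nabla_G E(U_n)^TD_n)\le0$ — both preserved by the restarted algorithm — the adaptation is routine.
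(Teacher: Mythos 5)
Your proposal is correct and follows essentially the same route as the paper: the authors likewise observe that the form of $\beta_n$ enters only in Lemma \ref{lema3}, and that the key inequality $|\text{tr}((\nabla_G E(U_n)-\nabla_G E(U_{n-1}))^T\nabla_G E(U_n))|\ge|\beta_n|\delta^2$ holds trivially when $\beta_n=0$, so the proof of Theorem \ref{thm-conv} carries over unchanged. Your additional check that the energy-descent bookkeeping and step-size lower bound rest only on \eqref{back-cond} and \eqref{cond1} matches the paper's (more tersely stated) justification.
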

\section{Numerical experiments} \label{sec-num}

Our algorithms are implemented on the software package
Octopus\footnote[1]{Octopus:www.tddft.org/programs/octopus.} (version 4.0.1), and all numerical experiments are carried out on LSSC-III in the State Key Laboratory of
Scientific and Engineering Computing of the Chinese Academy of Sciences. We choose LDA to approximate $v_{xc}(\rho)$ \cite{PeZu} and use the Troullier-Martins
norm conserving pseudopotential \cite{TrMa}.
The initial guess of the orbitals is generated by linear combination of the atomic orbits (LCAO)
method.

Our examples include several typical molecular systems: benzene ($C_6H_6$), aspirin ($C_9H_8O_4)$,
fullerene ($C_{60}$), alanine chain $(C_{33}H_{11}O_{11}N_{11})$, carbon
nano-tube ($C_{120}$), carbon clusters $C_{1015}H_{460}$ and $C_{1419}H_{556}$.
We compare our results with those obtained by the gradient type optimization algorithm proposed recently in
\cite{ZZWZ}, where some numerical results were given to show the advantage of the
new optimization method to the traditional SCF iteration for electronic structure calculations.
We choose OptM-QR algorithm,  the algorithm that performs best
in \cite{ZZWZ}, for comparison in our paper.
 We use the criterion that if $\vertiii \nabla_G E\vertiii $ is small enough to check the convergence.
 For small systems, the convergence criteria is
$\vertiii \nabla_G E\vertiii <1.0\times 10^{-12}$, while for the two large systems $C_{1015}H_{460}$ and $C_{1419}H_{556}$,
the convergence criteria is set to be $\vertiii \nabla_G E\vertiii < 1.0\times 10^{-11}$ 
(note that $\vertiii \nabla_G E\vertiii $ is
the absolute value, this setting is reasonable). During our numerical tests, we find that it
takes too much time to carry out the projection $D_n = F_n - U_n ({U_n}^T
F_n)$ in Algorithm CG-QR and Algorithm CG-PD, and there is no obvious difference between
the projected and non-projected algorithms. Therefore, this step is omitted in our numerical tests, that is, we set
$D_n = F_n$. The detailed results are shown in Table \ref{t1}.

 We should point out that the backtracking for the step size is not used by our algorithms in our numerical
experiments. Namely, we use $\tau_n = \tilde{\tau}_n$ for the CG algorithms. Therefore, we do not show how we choose $t$ and $\eta$ in Table \ref{t1}.
We should also point out that for  calculating the  step size $\tilde{\tau}_n$, we use the approximate Hessian \eqref{hes-pra} other than the exact Hessian \eqref{hes}\footnote[2]{We conclude from our numerical experiments in Appendix B that the exact Hessian method requires as many iterations as the approximate one while it spends more time than the approximate one, see Table \ref{t-all} in  Appendix B for details. Therefore, taking the cost and
the accuracy into account, we recommend to use the approximate Hessian  \eqref{hes-pra} instead of
the exact Hessian  \eqref{hes}.}.

\begin{center}
\begin{table}[!htbp]
\caption{The numerical results for systems with different sizes obtained by different algorithms, $\theta = 0.8$.}
\label{t1}
\begin{center}
{\small
\begin{tabular}{|c| c c c c|}
\hline
algorithm & energy (a.u.) & iter &  $\vertiii \nabla_G E\vertiii $ & wall clock time (s)\\
\hline
\multicolumn{5}{|c|}{benzene($C_6H_6) \ \ \  N_g = 102705 \ \ \  N = 15 \ \ \  cores = 8$} \\
\hline
OptM-QR & -3.74246025E+01 & 2059 & 9.81E-13 & 170.80  \\
CG-WY   & -3.74246025E+01 & 251  & 9.02E-13 & 11.54   \\
CG-QR   & -3.74246025E+01 & 251  & 9.01E-13 & 10.85   \\
CG-PD   & -3.74246025E+01 & 251  & 9.00E-13 & 11.23   \\
\hline
\multicolumn{5}{|c|}{aspirin($C_9H_8O_4) \ \ \  N_g = 133828 \ \ \  N = 34 \ \ \  cores = 16$} \\
\hline
OptM-QR & -1.20214764E+02 & 1898 & 8.71E-13 & 357.47 \\
CG-WY   & -1.20214764E+02 & 246  & 9.21E-13 & 30.71 \\
CG-QR   & -1.20214764E+02 & 246  & 9.21E-13 & 29.21 \\
CG-PD   & -1.20214764E+02 & 246  & 9.22E-13 & 28.81 \\
\hline
\multicolumn{5}{|c|}{$C_{60} \ \ \ N_g = 191805  \ \ \  N = 120 \ \ \  cores = 16$} \\
\hline
OptM-QR & -3.42875137E+02 & 2017  & 9.60E-13 & 1578.49  \\
CG-WY   & -3.42875137E+02 & 391   & 9.45E-13 & 227.60 \\
CG-QR   & -3.42875137E+02 & 391   & 9.45E-13 & 201.69 \\
CG-PD   & -3.42875137E+02 & 391   & 9.50E-13 & 210.45 \\
\hline
\multicolumn{5}{|c|}{alanine chain$(C_{33}H_{11}O_{11}N_{11})\ \ \  N_g = 293725 \ \ \  N = 132
\ \ \  cores = 32$} \\
\hline
OptM-QR & -4.78562217E+02 & 12276 & 9.93E-13 & 16028.13 \\
CG-WY   & -4.78562217E+02 & 2133  & 9.98E-13 & 1859.13 \\
CG-QR   & -4.78562217E+02 & 2100  & 9.88E-13 & 1658.16 \\
CG-PD   & -4.78562217E+02 & 2124  & 9.89E-13 & 1745.39 \\
\hline
\multicolumn{5}{|c|}{$C_{120} \ \ \ N_g = 354093 \ \ \  N = 240 \ \ \  cores = 32$} \\
\hline
OptM-QR & -6.84467048E+02 & 15000(fail) & 9.70E-10 & 33184.12 \\
CG-WY   & -6.84467048E+02 & 3369        & 9.99E-13 & 5679.66 \\
CG-QR   & -6.84467048E+02 & 3518        & 9.95E-13 & 5016.26 \\
CG-PD   & -6.84467048E+02 & 3359        & 9.95E-13 & 5094.17 \\
\hline
\multicolumn{5}{|c|}{$C_{1015}H_{460} \ \ \ N_g = 1462257 \ \ \  N = 2260 \ \ \  cores = 256$} \\
\hline
OptM-QR & -6.06369982E+03 & 1000(fail) & 5.28E-08 & 69805.53 \\
CG-WY   & -6.06369982E+03 & 266        & 9.15E-12 & 15550.53 \\
CG-QR   & -6.06369982E+03 & 266        & 9.17E-12 & 11180.94 \\
CG-PD   & -6.06369982E+03 & 266        & 9.29E-12 & 19138.82 \\
\hline
\multicolumn{5}{|c|}{$C_{1419}H_{556} \ \ \ N_g = 1828847 \ \ \  N = 3116 \ \ \  cores = 320$} \\
\hline
OptM-QR & -8.43085432E+03 & 1000 (fail) & 1.42E-08 & 130324.49 \\
CG-WY   & -8.43085432E+03 & 272         & 9.80E-12 & 29832.56 \\
CG-QR   & -8.43085432E+03 & 272         & 9.71E-12 & 20533.83 \\
CG-PD   & -8.43085432E+03 & 273         & 9.38E-12 & 40391.35 \\
\hline
\end{tabular}}
\end{center}
\end{table}
\end{center}

We see from  Table \ref{t1}
  that the conjugate gradient algorithms proposed in this paper
always need less iterations and less computational time than OptM-QR to reach the same accuracy.
It is also shown by Table \ref{t1} that the numbers of iterations required for the
convergence for the three conjugate gradient algorithms are almost the same. When it comes to the computational
time, CG-QR usually outperforms the other two algorithms especially for large systems.
The reason for the bad performance of Algorithm CG-PD for large systems is that the
eigen-decomposition of the matrix $  \widetilde{U}(\tau)^T\widetilde{U}(\tau) $
is too expensive 
which becomes the major computation in each iteration. In conclusion,
we recommend to use Algorithm CG-QR, especially for large systems.

%


We should also emphasize that the comparison in Table \ref{t1} is between OptM-QR with BB step size and our new CG  algorithms,
while BB step size is almost the most suitable step size choice for the gradient method.
 To show this, we list more results in Table \ref{t-all} in  Appendix B,
including those obtained by OptM-QR with Hessian based step size \eqref{stepsize} and those obtained by CG algorithms with BB step size.  The results show that OptM-QR with
BB step size performs much better than OptM-QR with the Hessian based step size, while for our CG algorithms, BB step size is less efficient than the Hessian based step size.
%

\begin{figure}
\centering
\caption{Convergence curves for  $\vertiii \nabla_G E\vertiii $  and  the error of energy $E(U_n) - E_{min}$  obtained by OptM-QR,
CG-WY, CG-QR and CG-PD for $C_{120}$.}
\includegraphics[width=0.8\textwidth]{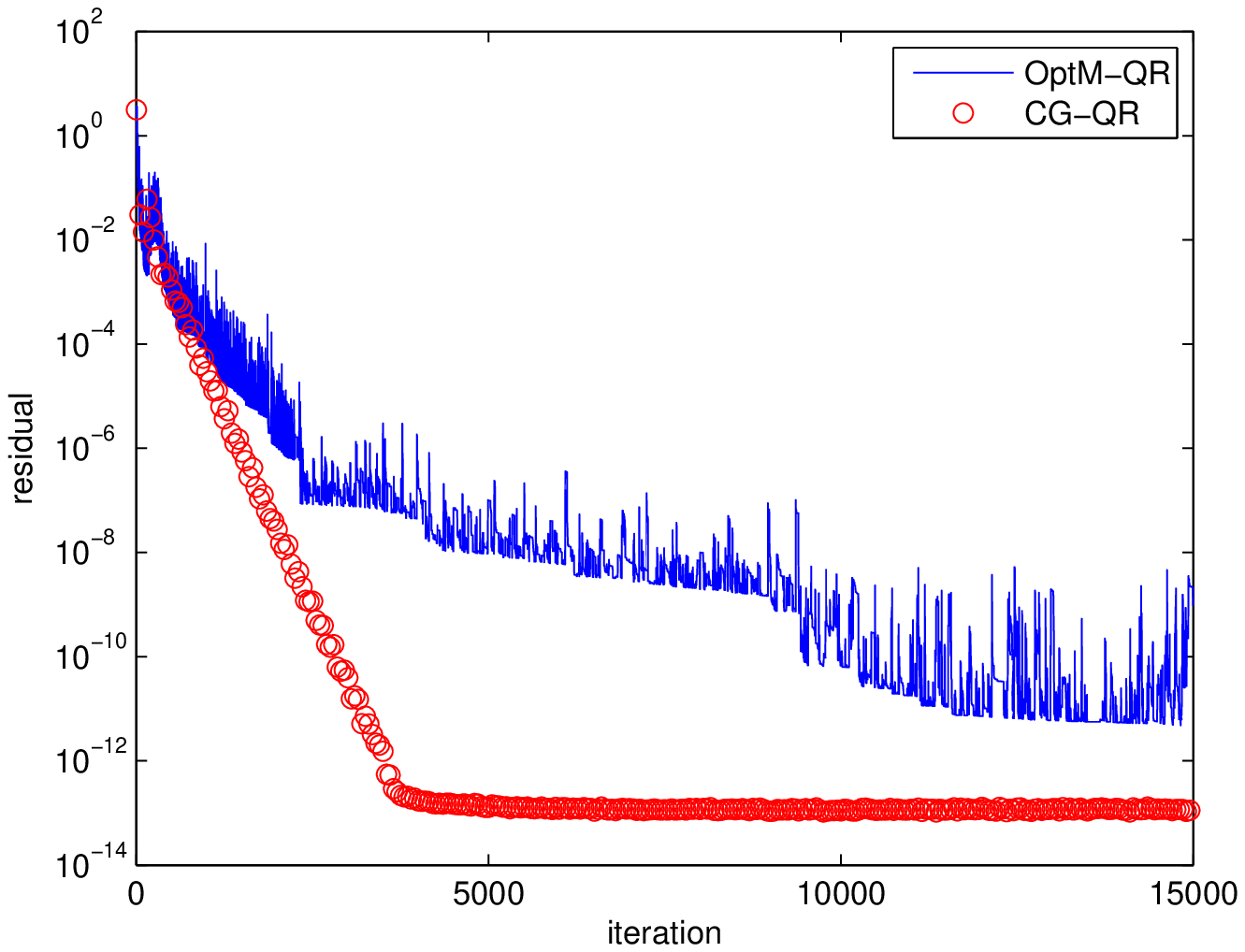} \\
\includegraphics[width=0.8\textwidth]{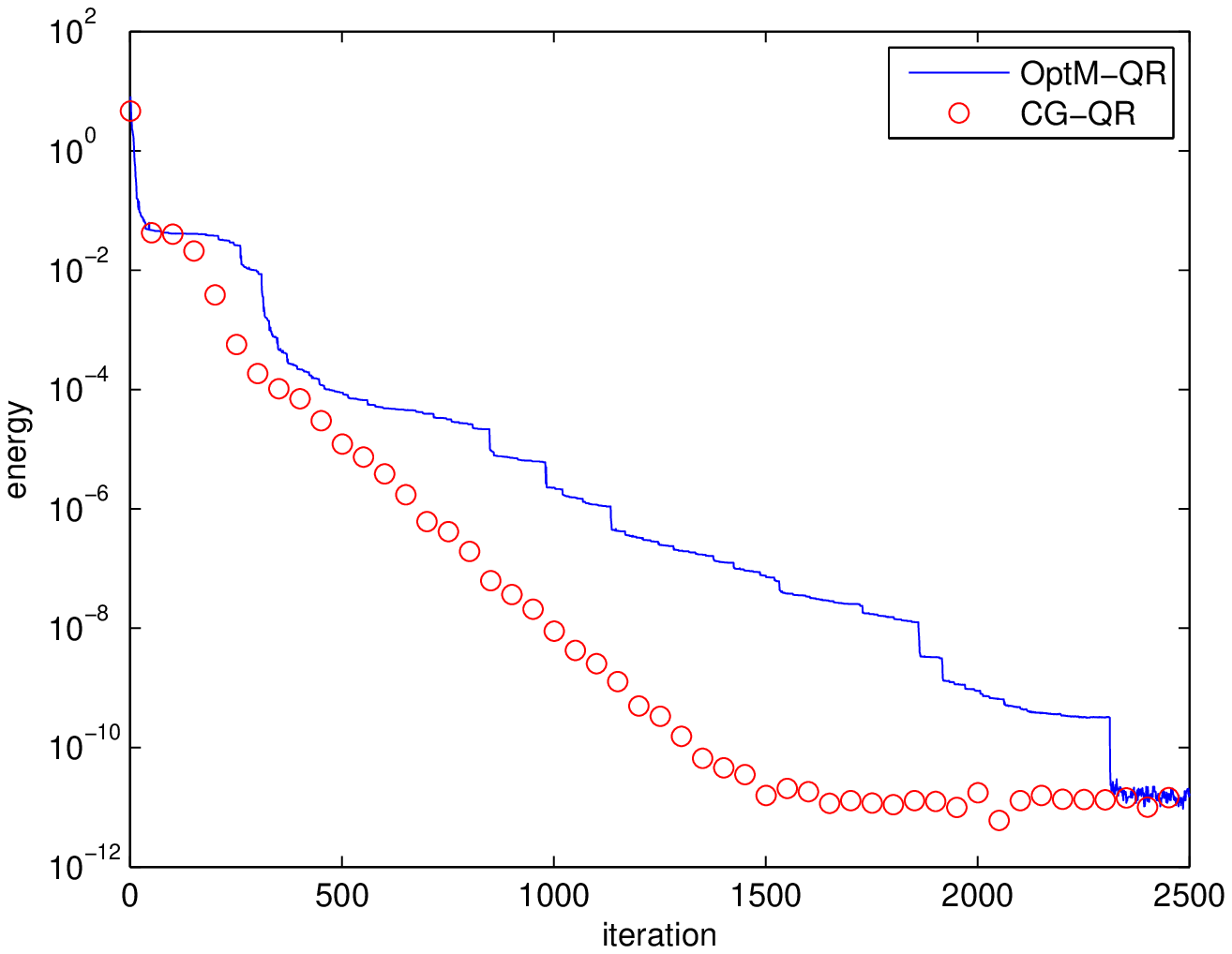}
\label{fig-ene-res}
\end{figure}

We then see the convergence curves for the residual $\vertiii \nabla_G E(U_n)\vertiii$ and the error of the total energy. We take $C_{120}$ as an example. We understand from Table \ref{t1} that the numbers of iterations required for the convergence for the three conjugate gradient methods are almost the same, namely, they share the similar convergence curves. As a result, we only show the results obtained by CG-QR for illustration.
The corresponding results are shown in Figure \ref{fig-ene-res}, where the x-axis is the
number of iterations, the y-axis for the top part is  the residual $\vertiii\nabla_GE(U_n)\vertiii$,
 and the y-axis for the bottom part is the error of the energy $E(U_n) - E_{min}$ ($E_{min}$ is a high-accuracy approximation of the exact total energy).
We can see that the curves for both the residual and the error of the total energy obtained by Algorithm CG-QR 
are smoother than those obtained by Algorithm OptM-QR, which indicates that our algorithms are more stable than Algorithm OptM-QR. We understand that the oscillation of the $\vertiii \nabla_G E\vertiii $ curve for Algorithm OptM-QR is caused by
the nonmonotonic behavior of the BB step size \cite{Dai2}.

We now turn to illustrate the advantages of our Restarted Algorithms. We choose $g_{tol}=5.0\times10^{-3}$ in our experiments. Other parameters are the same as those we chose in the previous experiments. We only focus on the algorithm  rCG-QR   since the QR strategy is better than others from our previous experiments. We observe that rCG-QR outperforms CG-QR much for alanine chain and $C_{120}$ for which plenty of iterations are required, while for other systems, rCG-QR performs similar with CG-QR. Therefore, we only show the detailed results for alanine chain and $C_{120}$ in Table \ref{t11}. To see the behavior of OptM-QR, CG-QR and rCG-QR more clearly, we also take
$C_{120}$ as an example and plot the convergence curves of the residual and the error of total energy for the three algorithms in Figure
\ref{fig-ene-res1}, from which we see that
rCG-QR converges faster than CG-QR.

\begin{center}
\begin{table}[!htbp]
\caption{The numerical results for alanine chain and $C_{120}$ obtained by different algorithms, $\theta = 0.8$.}
\label{t11}
\begin{center}
{\small
\begin{tabular}{|c| c c c c|}
\hline
algorithm & energy (a.u.) & iter &  $\vertiii \nabla_G E\vertiii $ & wall clock time (s)\\
\hline

\multicolumn{5}{|c|}{alanine chain$(C_{33}H_{11}O_{11}N_{11})\ \ \  N_g = 293725 \ \ \  N = 132
\ \ \  cores = 32$} \\
\hline
OptM-QR & -4.78562217E+02 & 12276 & 9.93E-13 & 16028.13 \\
CG-QR   & -4.78562217E+02 & 2100  & 9.88E-13 & 1658.16 \\
rCG-QR  & -4.78562217E+02 & 1493  & 9.80E-13 & 1192.14 \\
\hline
\multicolumn{5}{|c|}{$C_{120} \ \ \ N_g = 354093 \ \ \  N = 240 \ \ \  cores = 32$} \\
\hline
OptM-QR & -6.84467048E+02 & 15000(fail) & 9.70E-10 & 33184.12 \\
CG-QR   & -6.84467048E+02 & 3518        & 9.95E-13 & 5016.26 \\
rCG-QR  & -6.84467048E+02 & 1846        & 9.68E-13 & 2946.68 \\

\hline
\end{tabular}}
\end{center}
\end{table}
\end{center}

\begin{figure}
\centering
\caption{Convergence curves for $\vertiii \nabla_G E\vertiii $  and  the error of energy $E(U_n) - E_{min}$ obtained by OptM-QR,
 CG-QR and rCG-QR for $C_{120}$.}
\includegraphics[width=0.8\textwidth]{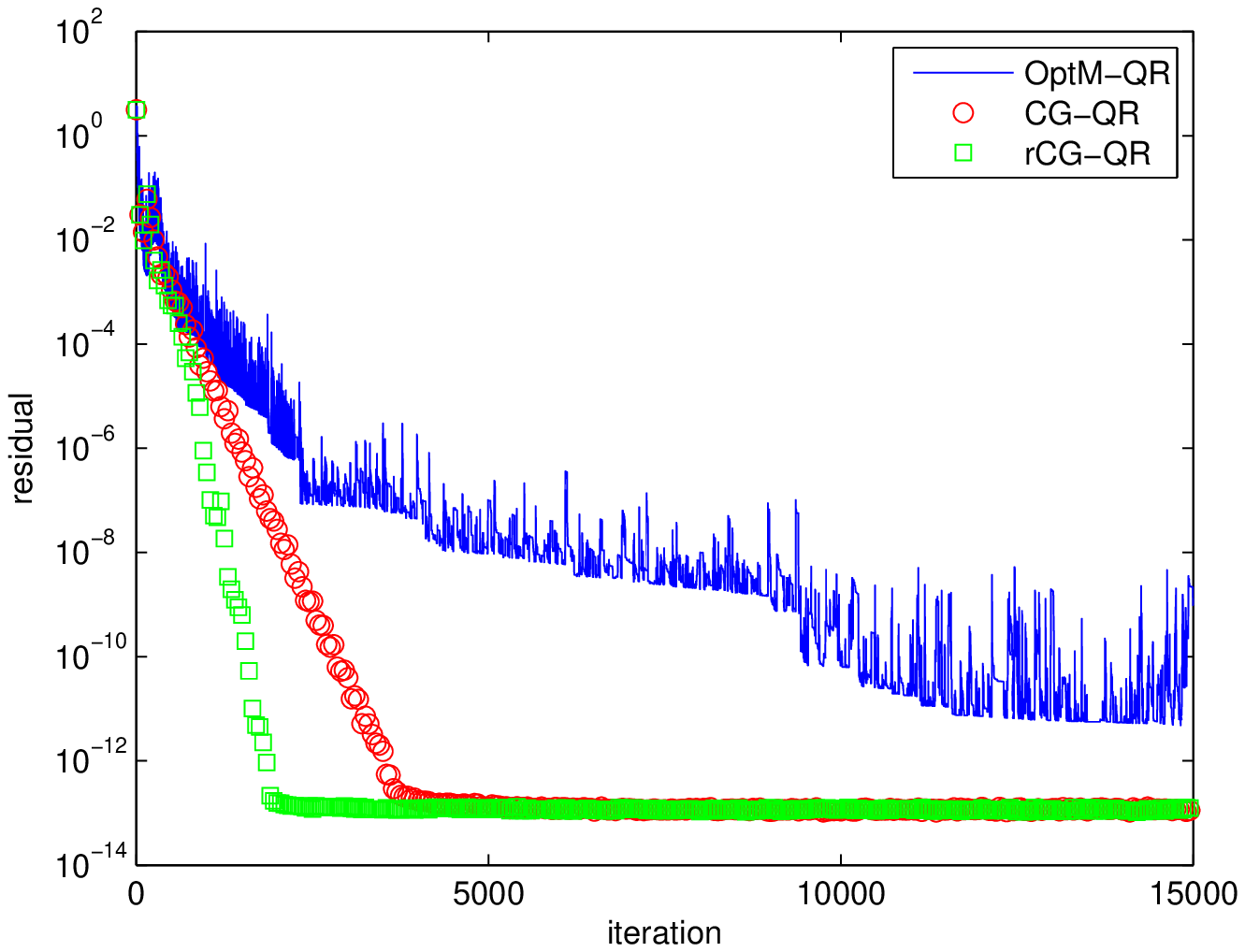} \\
\includegraphics[width=0.8\textwidth]{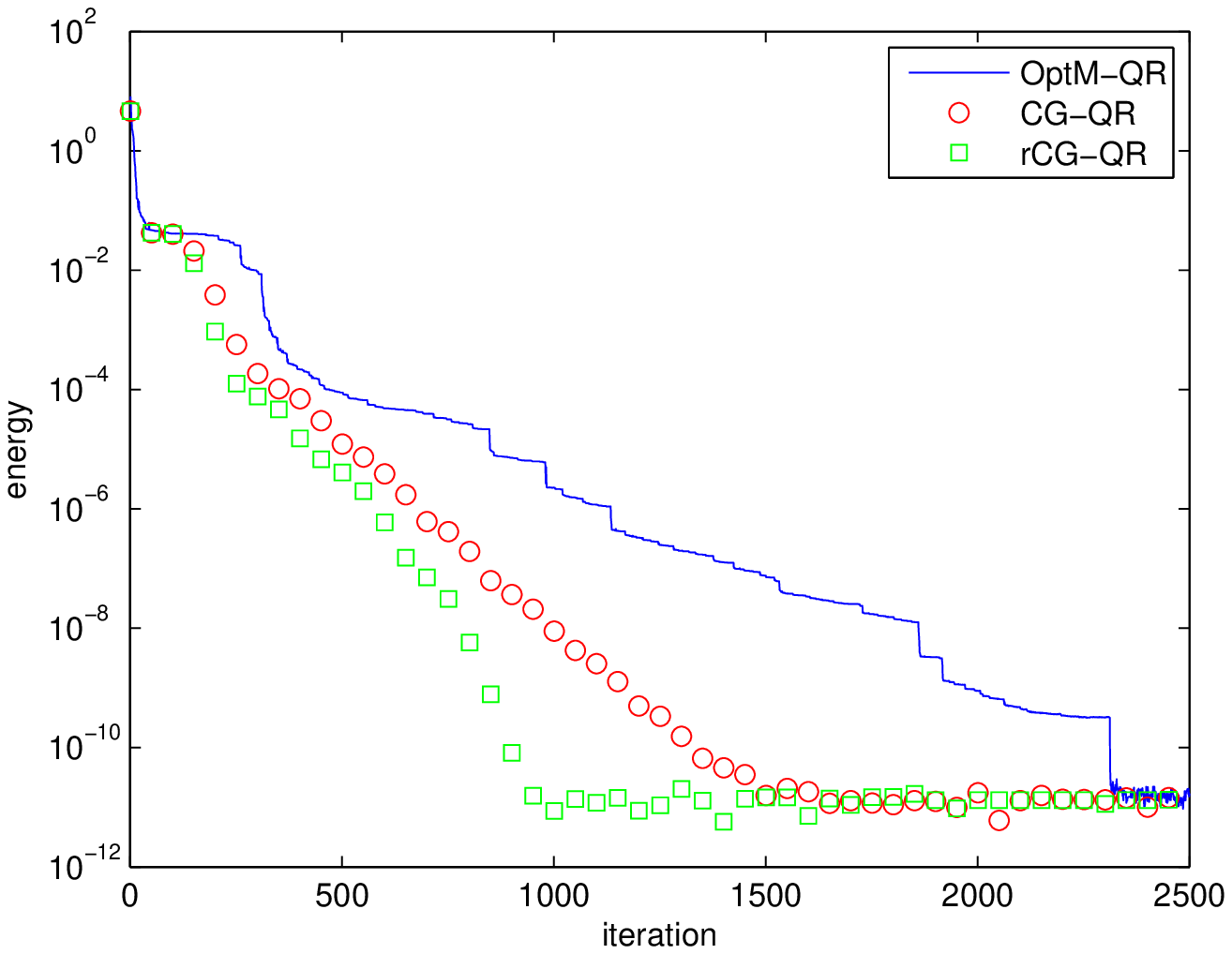}
\label{fig-ene-res1}
\end{figure}

\section{Concluding remarks} \label{sec-cln}
We have proposed a conjugate gradient method for electronic structure calculations in this paper.
Under some reasonable assumptions,
we have proved the local convergence of our algorithms. It is shown by our numerical experiments  that our
algorithms are efficient. We believe these conjugate gradient algorithms can be
further improved by using some preconditioners, which is indeed our on-going project.
We should also point out that the choice in the Hessian based strategy may not be good when
\eqref{pos-hess} fails, there should be a better strategy,
which is also our on-going work.

Our algorithms can be naturally applied to extreme eigenvalue calculations for large scale
symmetric matrices, where the smallest or largest $N$ eigenvalues and corresponding
eigenvectors need to be computed. For matrix eigenvalue problems, Assumption \ref{assum_lip}
is always satisfied, and Assumption \ref{assum_hess} is satisfied when there is a gap between the $N$-th
and $(N+1)$-th eigenvalues \cite{SRNB}, and thus our convergence result also holds.
Our algorithms are simpler to implement than the projected preconditioned conjugate gradient
algorithm proposed in \cite{VYP}, where several practical issues should be taken into account so as to
achieve good performance and some choices of parameters are problems based.

In addition, our method can also be applied to other orthogonality constrained
optimization problems which satisfy our assumptions, for instance, the low rank nearest correlation estimation
\cite{SA}, the quadratic assignment problem \cite{BKF, WY}, etc. For these nonlinear problems where
the calculations of Hessian are not easy, some quasi-Newton method may be used to update
the Hessian.
Anyway, our method should be a quite general approach for optimization problems with
orthogonality constraint.

Finally, we should mention that the algorithm OptM-QR proposed in \cite{ZZWZ} needs less memories at each iteration since it is a gradient type method with BB step sizes. 
 Besides, although the conjugate gradient method is superior in terms of asymptotic performance, the BB method gives reasonable results for relatively high tolerances, which makes the BB method useful in some applications. We also understand from \cite{F05} that the BB method may outperform the conjugate gradient method in some other cases.


\section*{Acknowledgements} The authors would like to thank  Professor Xin Liu for his comments and suggestions
on the preprint and also for driving our attention to Algorithm 13 in \cite{AbMaSe}.
 The authors  would also like to thank the anonymous referees for
their useful comments and suggestions that improve the presentation of this paper and motivate the authors to present the restarted algorithms.

\Appendix
\renewcommand{\appendixname}{Appendix~\Alph{section}}

\section{Proof of Lemma \ref{lema-uniq}}\label{app-proof}
To prove Lemma \ref{lema-uniq}, we first introduce some notation and a lemma. For a diagonal matrix $\mathcal{D} =\text{diag}(d_1, d_2, \cdots, d_N)$, we use $\sin{\mathcal{D}}$  to denote $\text{diag}(\sin{d_1}, \sin{d_2}, \cdots, \sin{d_N})$,  with similar notation for $\cos{\mathcal{D}}$, $\arcsin{\mathcal{D}}$ and $\arccos{\mathcal{D}}$.
\begin{lemma}\label{lema-geod}
For $\Psi = (\psi_1,\cdots,\psi_N)\in \mathcal{M}_{N_g}^N$ and
$\Phi = (\phi_1,\cdots,\phi_N)\in \mathcal{M}_{N_g}^N$, $[\Psi] \ne
[\Phi]$, there exists a curve
$\Gamma(t) \in \mathcal{M}_{N_g}^N, \ t \in [0,1]$, such that $[\Gamma(0)]=[\Psi]$, $[\Gamma(1)] = [\Phi]$,
$\textup{dist}([\Gamma(t)], [\Psi]) \le \textup{dist}
([\Phi], [\Psi])$, and
\begin{equation}\label{curve-der}
  \Gamma(t)^T \Gamma'(t)   = 0.
\end{equation}
\end{lemma}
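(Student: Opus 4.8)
The plan is to realize $\Gamma$ as a horizontal lift of the Riemannian geodesic of $\mathcal{G}^N_{N_g}$ joining $[\Psi]$ to $[\Phi]$, written in closed form through the principal angles of the two subspaces (the geodesic formula of \cite{EAS}). First I would normalize the two representatives: by Lemma~\ref{lema-dist}, if $ASB^T$ is the SVD of $\Psi^T\Phi$, then replacing $\Phi$ by $\Phi P_0$ with $P_0 = BA^T$ changes neither $[\Phi]$ nor $\textup{dist}([\Phi],[\Psi])$, gives $\vertiii \Psi - \Phi \vertiii = \textup{dist}([\Psi],[\Phi])$, and makes $\Psi^T\Phi = ASA^T$ symmetric positive semidefinite, with eigenvalues equal to the singular values of the original $\Psi^T\Phi$ and hence lying in $[0,1]$. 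After this reduction I may write $\Psi^T\Phi = A\cos\Theta\,A^T$ with $\Theta = \mathrm{diag}(\theta_1,\dots,\theta_N)$, $\theta_i \in [0,\pi/2]$; since $[\Psi] \ne [\Phi]$, not all $\theta_i$ vanish.

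Next I would set $Q = \Psi A$ and $Z = (I - \Psi\Psi^T)\Phi A = \Phi A - Q\cos\Theta$. Using $\Psi^T\Psi = \Phi^T\Phi = I_N$, a direct computation gives $Q^TQ = I_N$, $Q^TZ = 0$ and $Z^TZ = \sin^2\Theta$, so for the indices with $\theta_i > 0$ the columns $z_i$ of $Z$ are orthogonal to every column of $Q$ and the vectors $u_i := z_i/\sin\theta_i$ are orthonormal in $V_{N_g}$. I then define $U = (u_1,\dots,u_N)$ with $u_i$ as above when $\theta_i > 0$ and $u_i$ an arbitrary element of $V_{N_g}$ when $\theta_i = 0$, and set
\begin{equation*}
\Gamma(t) = Q\cos(t\Theta) + U\sin(t\Theta), \qquad t \in [0,1].
\end{equation*}

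The four assertions are then checked by direct calculation, each one reducing to the orthogonality relations above. (i) $\Gamma(t)^T\Gamma(t) = \cos^2(t\Theta) + \sin^2(t\Theta) = I_N$, using that any contribution from an index with $\theta_i = 0$ is either the diagonal entry $q_i^Tq_i\cos^2(t\theta_i) = 1$ or carries a factor $\sin(t\theta_i) = 0$; thus $\Gamma(t) \in \mathcal{M}^N_{N_g}$ and $\Gamma$ is smooth. (ii) $\Gamma(0) = Q$, so $[\Gamma(0)] = [\Psi]$, and $\Gamma(1) = Q\cos\Theta + Z = \Phi A$, so $[\Gamma(1)] = [\Phi]$ (for $\theta_i = 0$ one uses that $z_i = 0$, which forces $(\Phi A)_i = q_i$). (iii) $\Gamma'(t) = -Q\Theta\sin(t\Theta) + U\Theta\cos(t\Theta)$, and in $\Gamma(t)^T\Gamma'(t)$ the off-diagonal entries and the $\theta_i = 0$ contributions vanish as in (i), while the surviving diagonal entries $-\theta_i\cos(t\theta_i)\sin(t\theta_i) + \theta_i\cos(t\theta_i)\sin(t\theta_i)$ cancel, giving \eqref{curve-der}. (iv) Since $\Psi^TQ = A$ and $\Psi^TZ = 0$ we have $\Psi^T\Gamma(t) = A\cos(t\Theta)$, hence, bounding the minimum in \eqref{dist} above by its value at $P = A$,
\begin{equation*}
\textup{dist}([\Gamma(t)],[\Psi])^2 \le \vertiii \Gamma(t) - Q \vertiii^2 = 2\sum_{i=1}^N \bigl(1 - \cos(t\theta_i)\bigr),
\end{equation*}
which is nondecreasing on $[0,1]$ because $t\theta_i \in [0,\pi/2]$; at $t = 1$ the right-hand side equals $\vertiii \Phi A - \Psi A \vertiii^2 = \vertiii \Phi - \Psi \vertiii^2 = \textup{dist}([\Phi],[\Psi])^2$, so $\textup{dist}([\Gamma(t)],[\Psi]) \le \textup{dist}([\Phi],[\Psi])$ for every $t$.

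The one delicate point — and the step I would treat most carefully — is the degenerate case where some principal angles $\theta_i$ vanish, i.e.\ where $[\Psi]$ and $[\Phi]$ share part of a subspace, since then the normalization $z_i/\sin\theta_i$ is undefined. This is ultimately harmless: those columns enter $\Gamma(t)$ only multiplied by $\sin(t\theta_i) \equiv 0$, so the choice of the corresponding $u_i$ is immaterial, and all the orthogonality identities, endpoint evaluations, and the computation of $\vertiii \Gamma(t) - Q \vertiii$ used above go through verbatim.
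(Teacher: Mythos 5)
Your proposal is correct and follows essentially the same route as the paper: both construct the explicit principal-angle geodesic $\Gamma(t)=\Psi A\cos(\Theta t)+(\text{orthonormal complement})\sin(\Theta t)$ from the SVD of $\Psi^T\Phi$ and verify the four properties by the same orthogonality computations, the only cosmetic difference being that the paper extracts the second factor from an SVD of $\Phi-\Psi(\Psi^T\Phi)$ while you rotate $\Phi$ to the distance-minimizing representative and normalize the columns of $(I-\Psi\Psi^T)\Phi A$ directly. Your explicit treatment of the degenerate case $\theta_i=0$ is a point the paper glosses over, and it is handled correctly.
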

\begin{proof}
Let $  \Psi^T \Phi   = ASB^T$ be the SVD of $  \Psi^T \Phi  $.
We obtain from $\Psi \in \mathcal{M}_{N_g}^N$ and $\Phi \in \mathcal{M}_{N_g}^N$ that
 $S = \text{diag}(s_1, s_2, \cdots, s_N)$ is a diagonal matrix with $s_i \in [0, 1]$,
and hence we denote the diagonal matrix $\arccos S$ by $\Theta$, which means that
$\Theta = \text{diag}(\theta_1, \theta_2, \cdots, \theta_N)$ with $\theta_i = \arccos s_i$.
Let $A_2 S_2B_2^T$ be the SVD of
$\Phi - \Psi ( \Psi^T \Phi )$, where $A_2 \in
\mathcal{M}_{N_g}^N$, $S_2$ is a diagonal matrix containing the singular values, and
$B_2 \in \mathcal{O}^{N\times N}$. Then there holds
\begin{eqnarray*}
  B_2 S_2^2 B_2^T &=&   (\Phi - \Psi ( \Psi^T \Phi ))^T
                  (\Phi - \Psi ( \Psi^T \Phi )) \\
                  &=& I_N - ( \Psi^T \Phi )^T( \Psi^T \Phi )
                   = I_N - BS^2B^T = B(\sin\Theta)^2B^T.
\end{eqnarray*}
As a result, we may choose $S_2 = \sin \Theta$, $B_2 = B$, and obtain $\Phi - \Psi ( \Psi^T \Phi )
= A_2\sin \Theta B^T$. Let
\begin{equation}
\Gamma(t) = \Psi A \cos \Theta t + A_2 \sin \Theta t.
\end{equation}
It is easy to verify that $\Gamma(t) \in \mathcal{M}_{N_g}^N$, $[\Gamma(0)]=[\Psi]$, $[\Gamma(1)] = [\Phi]$,
and $  \Gamma(t)^T \Gamma'(t)   = 0$. Furthermore,
\begin{eqnarray*}
\vertiii \Gamma(t) - \Psi A \vertiii^2 &=& \vertiii \Psi A (\cos \Theta t - I_N) +
A_2 \sin \Theta t \vertiii^2 \\
 & = & \text{tr}((\cos \Theta t - I_N)^2 + (\sin \Theta t)^2) \\
 & = & \text{tr}(2I_N - 2\cos\Theta t).
\end{eqnarray*}
We understand from $s_i \in [0, 1]$ that $\theta_i \in [0, \frac{\pi}{2}]$, which implies that
$\vertiii \Gamma(t) - \Psi A \vertiii^2$ is monotonically non-decreasing
for $t \in [0, 1]$. We then get from the proof of Lemma \ref{lema-dist} that
\begin{equation*}
\vertiii \Gamma(t) - \Psi A \vertiii^2 \le \vertiii \Gamma(1) - \Psi A \vertiii^2
 =\text{tr}(2I_N - 2\cos\Theta) = \text{dist}([\Phi], [\Psi]).
\end{equation*}
Hence we have $\text{dist}([\Gamma(t)], [\Psi]) \le
\text{dist}([\Phi], [\Psi])$. This completes the proof.
\end{proof}

 We should point out that our proof here is inspired by the geodesic formula on page 11 of \cite{AbMaSe04}, where the
invertibility of $  \Psi^T \Phi  $ is required. However, we do not need
this condition to get $\Gamma(t)$.

Now we turn to prove Lemma \ref{lema-uniq}.

\textbf{Proof of Lemma \ref{lema-uniq}:}
\begin{proof}
Let us prove the first conclusion by contradiction. Assume that there exists
$[V^*] \in B([U^*], \delta_1)$, $[V^*] \ne
[U^*]$, such that $\nabla_G E(V^*) = 0$. We see from Assumption
\ref{assum_hess} that $[V^*]$ is also a local minimizer of the energy functional.
We obtain from Lemma \ref{lema-geod} that
there exists a curve $\Gamma(t) \in \mathcal{M}_{N_g}^N, \ t \in[0,1]$, such that $[\Gamma(0)]=[U^*]$, $[\Gamma(1)] = [V^*]$,
$[\Gamma(t)] \in B([U^*], \delta_1)$,
and
\begin{equation*}
  (\Gamma(t))^T\Gamma'(t)  = 0,
\end{equation*}
which implies that $\Gamma'(t)$ is in the tangent space of $\Gamma(t)$ on the Grassmann
manifold. Since $E(\Gamma(t))$ is continuous in $[0, 1]$, and $[U^*]$,
$[V^*]$ are local minimizers, we have that there is a $t_0 \in (0,1)$,
such that $\Gamma(t_0)$ is the maximizer of $E(\Gamma(t)), \ t \in [0, 1]$. This indicates that
\begin{equation*}
\text{tr} (\nabla_G E(\Gamma(t_0))^T\Gamma'(t_0)) = 0.
\end{equation*}
Further, since $\Gamma(t_0)$ is the maximizer of $E(\Gamma(t))$ on this curve,
we see that
\begin{equation*}
 \text{Hess}_GE (\Gamma(t_0))[\Gamma'(t_0), \Gamma'(t_0)] \le 0,
\end{equation*}
which contradicts with the coercivity assumption in Assumptions \ref{assum_hess}. Namely,
there exists only one stationary point $[U^*]$ in $B([U^*],
\delta_1)$ on the Grassmann manifold.

Now we prove the second conclusion by contradiction too. If \eqref{dist-conv} is not true,
then there exists $\hat{\delta} > 0$ and a subsequence $\{V_{n_k}\}_{k=1}^\infty$ of
$\{V_n\}_{n=1}^\infty$, such that
\begin{equation*}
\text{dist}([V_{n_k}], [U^*]) \ge \hat{\delta}, \ \forall \ k\ge 0.
\end{equation*}
We obtain from Lemma \ref{lema-dist} that for each $k$, there exists
$P_{n_k}\in \mathcal{O}^{N\times N}$ satisfying
\begin{equation*}
\vertiii  V_{n_k}P_{n_k} - U^* \vertiii  = \text{dist}([V_{n_k}],
[U^*]) \ge \hat{\delta}.
\end{equation*}
Since $\{V_{n_k}P_{n_k}\}_{k=1}^\infty$ are bounded and $\mathcal{M}^N_{N_g}$ is compact,
we get that there exists a subsequence $\{V_{n_{k_j}}P_{n_{k_j}}\}_{j=1}^\infty$ and $U_0$,
such that $\lim\limits_{j\to\infty}\vertiii V_{n_{k_j}}P_{n_{k_j}} - U_0\vertiii  = 0$. Then, we obtain
from $\lim\limits_{n\to \infty} E(V_n) = E(U^*)$ that
\begin{equation}\label{ene-min}
E(U_0) = \lim_{j\to\infty} E(V_{n_{k_j}}P_{n_{k_j}}) = \lim_{j\to\infty}
E(V_{n_{k_j}}) = E(U^*).
\end{equation}
Due to $[V_{n_{k_j}}P_{n_{k_j}}] \in B([U^*], \delta_1)$, we have that
$[U_0] \in B([U^*], \delta_1)$, which together  with
\eqref{ene-min} yields that $[U_0]$ is also a minimizer of the energy functional in
$B([U^*], \delta_1)$. Therefore, we obtain $\nabla_G E(U_0) = 0$.
We conclude from the proof for the first conclusion  that $[U_0] = [U^*]$,
which contradicts with $\text{dist}([V_{n_{k_j}}], [U^*]) \ge \hat{\delta}$.
This completes the proof.
\end{proof}

\section{Numerical tests of the step sizes}\label{app-numtest}
In this appendix, we will report some more numerical results for the algorithms using different step sizes and different calculation
formulas for
  the Hessian, which lead to our recommendation. The detailed results are listed  in Table
\ref{t-all}.

First, we introduce some notation used in Table \ref{t-all}.
\begin{itemize}
\item OptM-QR-BB: the OptM-QR method with BB step size proposed in \cite{ZZWZ};
\item  OptM-QR-aH: OptM-QR with the Hessian based step size \eqref{stepsize}, where $D_n$ is replaced by $\nabla_GE(U_n)$ and the approximate Hessian
 \eqref{hes-pra} is used;
\item  OptM-QR-H: OptM-QR with the Hessian based step size
\eqref{stepsize}, where $D_n$ is replaced by $\nabla_GE(U_n)$, and the exact Hessian
 \eqref{hes} is used;
\item   CG-QR-BB: the CG-QR algorithm with BB step size;
\item CG-QR-aH: the CG-QR algorithm with the Hessian based step size \eqref{stepsize} and the approximate
Hessian  \eqref{hes-pra}  being used;
\item  CG-QR-H: the CG-QR algorithm with the Hessian based step size \eqref{stepsize} and the exact
Hessian  \eqref{hes}  being used.
\end{itemize}

We should point out that CG-QR-aH here is just CG-QR in the former part of this paper, and OptM-QR-BB here is just OptM-QR in the former part of this paper.

We first take a look at the results obtained by our CG algorithms using the approximate Hessian \eqref{hes-pra}  and
the exact Hessian \eqref{hes}, respectively. By the comparison, we can see that the algorithms using the exact Hessian \eqref{hes}
require as much iterations as the algorithms using the approximate Hessian \eqref{hes-pra}, while the former spends of course more time than the latter one.
To understand this numerical phenomenon, we show the changes of the approximate Hessian
\eqref{hes-pra}, the Hartree term (second line of \eqref{hes}) and the exchange and
correlation term (third line of \eqref{hes}) as  a function of the number of iteration in Figure \ref{fig-hes}.
We can see that in most cases, the last two terms are much smaller than the approximate
Hessian term, which means that they contribute little to the exact Hessian  and can be neglected.
Therefore, taking the cost and the accuracy into account, we recommend to use the
approximate  Hessian \eqref{hes-pra} instead of the exact Hessian \eqref{hes}.

We then compare the efficiency of algorithms with different step sizes. We observe from Table \ref{t-all}
that when using the Hessian-based step size, CG algorithms outperform OptM-QR much. Furthermore, based on our numerical experiments,
we should point out that the BB step size is almost the most suitable one for the gradient
method while it may not be so good for the CG methods.

%

\begin{center}
\begin{table}
\caption{Numerical results for systems with different sizes obtained  by   different algorithms.}\label{t-all}
\begin{center}
{\small
\begin{tabular}{|c| c c c c|}
\hline
algorithm & energy (a.u.) & iter & $\vertiii \nabla_G E \vertiii $ & wall clock time  (s)\\
\hline
\multicolumn{5}{|c|}{benzene($C_6H_6) \ \ \  N_g = 102705 \ \ \  N = 15 \ \ \  cores = 8$} \\
\hline
OptM-QR-BB      & -3.74246025E+01 & 164  & 7.49E-07 & 6.63    \\
OptM-QR-aH       & -3.74246025E+01 & 1753 & 9.92E-07 & 67.83   \\
OptM-QR-H   & -3.74246025E+01 & 1771 & 9.92E-07 & 86.33   \\
\hline
CG-QR-BB        & -3.74246025E+01 & 249  & 9.98E-07 & 10.41   \\
CG-QR-aH        & -3.74246025E+01 & 118  & 9.94E-07 & 5.56    \\
CG-QR-H         & -3.74246025E+01 & 119  & 9.10E-07 & 7.26    \\
\hline
\multicolumn{5}{|c|}{aspirin($C_9H_8O_4) \ \ \  N_g = 133828 \ \ \  N = 34 \ \ \  cores = 16$} \\
\hline
OptM-QR-BB      & -1.20214764E+02 & 153  & 9.89E-07 & 14.63  \\
OptM-QR-aH      & -1.20214764E+02 & 1271 & 9.92E-07 & 137.97 \\
OptM-QR-H       & -1.20214764E+02 & 1275 & 9.89E-07 & 165.61 \\
\hline
CG-QR-BB        & -1.20214764E+02 & 220  & 7.94E-07 & 25.28  \\
CG-QR-aH        & -1.20214764E+02 & 118  & 9.68E-07 & 15.15  \\
CG-QR-H         & -1.20214764E+02 & 118  & 9.50E-07 & 17.94  \\
\hline
\multicolumn{5}{|c|}{$C_{60} \ \ \ N_g = 191805  \ \ \  N = 120 \ \ \  cores = 16$} \\
\hline
OptM-QR-BB      & -3.42875137E+02 & 234  & 7.17E-07 & 92.17   \\
OptM-QR-aH      & -3.42875137E+02 & 3155 & 9.90E-07 & 1617.35 \\
OptM-QR-H       & -3.42875137E+02 & 3096 & 9.97E-07 & 1642.80 \\
\hline
CG-QR-BB        & -3.42875137E+02 & 332  & 9.20E-07 & 152.85  \\
CG-QR-aH        & -3.42875137E+02 & 184  & 9.78E-07 & 98.64   \\
CG-QR-H         & -3.42875137E+02 & 184  & 9.74E-07 & 104.29   \\
\hline

\multicolumn{5}{|c|}{alanine chain$(C_{33}H_{11}O_{11}N_{11})\ \ \  N_g = 293725 \ \ \  N = 132
\ \ \  cores = 32$} \\
\hline
OptM-QR-BB      & -4.78562217E+02 & 1558  & 9.49E-07 & 906.95 \\
OptM-QR-aH      & -4.78562217E+02 & 30000 & 4.42E-06 & 23404.10 \\
OptM-QR-H       & -4.78562216E+02 & 30000 & 7.54E-05 & 29684.30 \\
\hline
CG-QR-BB        & -4.78562217E+02 & 3190  & 9.59E-07 & 2338.59 \\
CG-QR-aH        & -4.78562217E+02 & 1017  & 9.93E-07 & 818.32 \\
CG-QR-H         & -4.78562217E+02 & 1232  & 1.00E-06 & 1204.82 \\
\hline
\multicolumn{5}{|c|}{$C_{120} \ \ \ N_g = 354093 \ \ \  N = 240 \ \ \  cores = 32$} \\
\hline
OptM-QR-BB    & -6.84467048E+02 & 2024  & 9.53E-07 & 2119.55 \\
OptM-QR-aH    & -6.84467047E+02 & 30000 & 5.67E-05 & 41941.88 \\
OptM-QR-H     & -6.84467046E+02 & 30000 & 5.74E-05 & 44111.19 \\
\hline
CG-QR-BB      & -6.84467048E+02 & 3573  & 9.88E-07 & 4426.20 \\
CG-QR-aH      & -6.84467048E+02 & 1490  & 9.93E-07 & 2164.48 \\
CG-QR-H       & -6.84467048E+02 & 1324  & 9.89E-07 & 2019.65 \\
\hline
\multicolumn{5}{|c|}{$C_{1015}H_{460} \ \ \ N_g = 1462257 \ \ \  N = 2260 \ \ \  cores = 256$} \\
\hline
OptM-QR-BB    &  -6.06369982E+03 & 137  & 3.38E-05 & 4813.73 \\
OptM-QR-aH    &  -6.06369982E+03 & 1078 & 7.57E-05 & 43975.26 \\
OptM-QR-H     &  -6.06369982E+03 & 1102 & 7.58E-05 & 45701.15 \\
\hline
CG-QR-BB      &  -6.06369982E+03 & 163  & 6.36E-05 & 7511.18 \\
CG-QR-aH      &  -6.06369982E+03 & 104  & 7.03E-05 & 4640.49 \\
CG-QR-H       &  -6.06369982E+03 & 104  & 7.05E-05 & 4724.56 \\
\hline
\multicolumn{5}{|c|}{$C_{1419}H_{556} \ \ \ N_g = 1828847 \ \ \  N = 3116 \ \ \  cores = 512$} \\
\hline
OptM-QR-BB    & -8.43085432E+03 & 151  & 1.01E-04 & 7769.35 \\
OptM-QR-aH    & -8.43085432E+03 & 988  & 1.01E-04 & 56845.33 \\
OptM-QR-H     & -8.43085432E+03 & 1054 & 9.99E-05 & 61326.61 \\
\hline
CG-QR-BB      & -8.43085432E+03 & 200 & 9.16E-05 & 13495.65 \\
CG-QR-aH      & -8.43085432E+03 & 103 & 1.00E-04 & 5568.38 \\
CG-QR-H       & -8.43085432E+03 & 103 & 1.00E-04 & 6578.48 \\
\hline
\end{tabular}}
\end{center}
\end{table}
\end{center}

\begin{figure}
\centering
\caption{Left: changes of each term in Hessian $\textup{Hess}_GE(U_n)[D_n, D_n]$
with iteration. Right: the quotient of the sum of the Hartree and exchange correlation
terms divided by the approximate Hessian. (for algorithm CG-QR-H).} \label{fig-hes}
\includegraphics[width=0.45\textwidth]{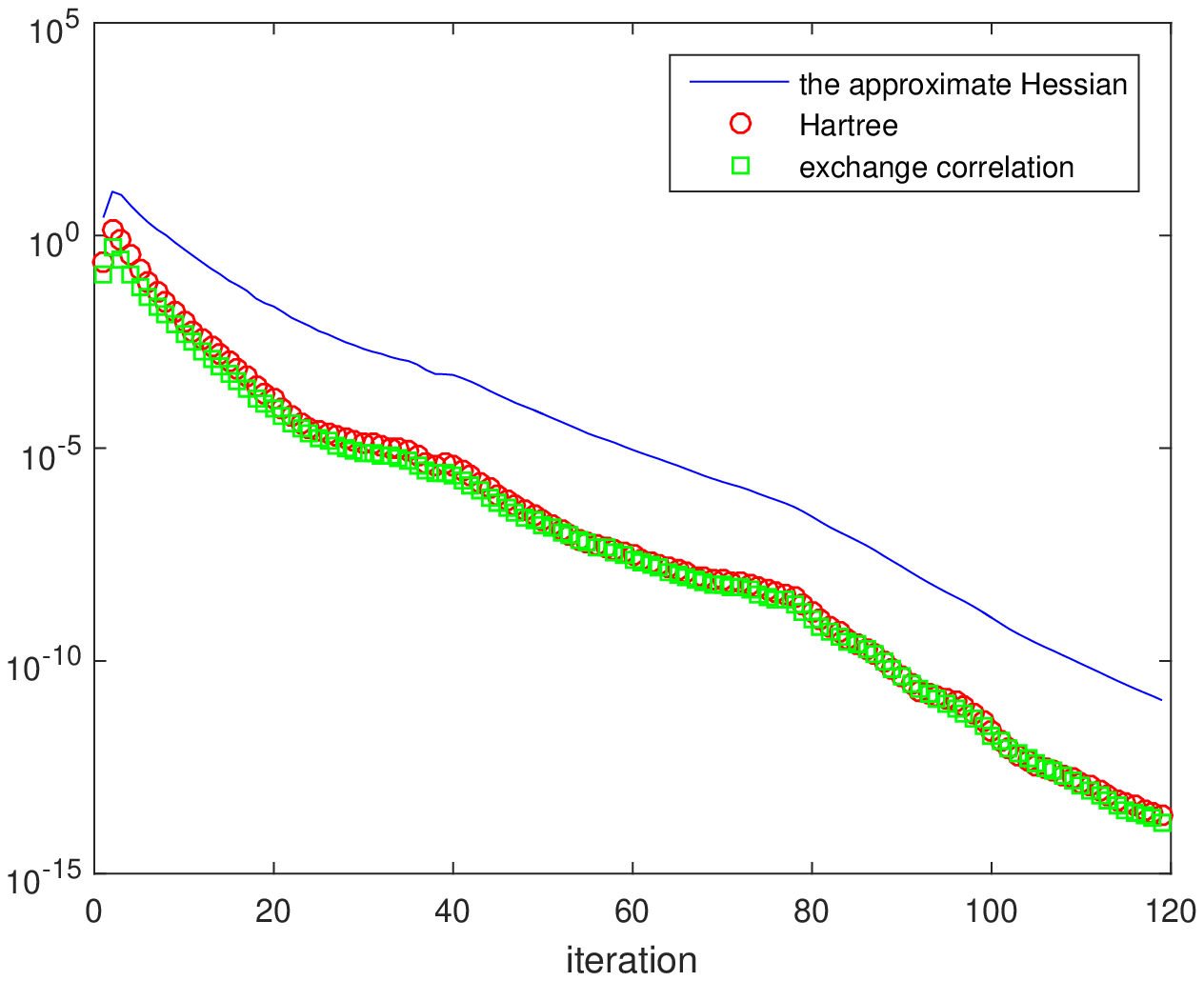}
\includegraphics[width=0.45\textwidth]{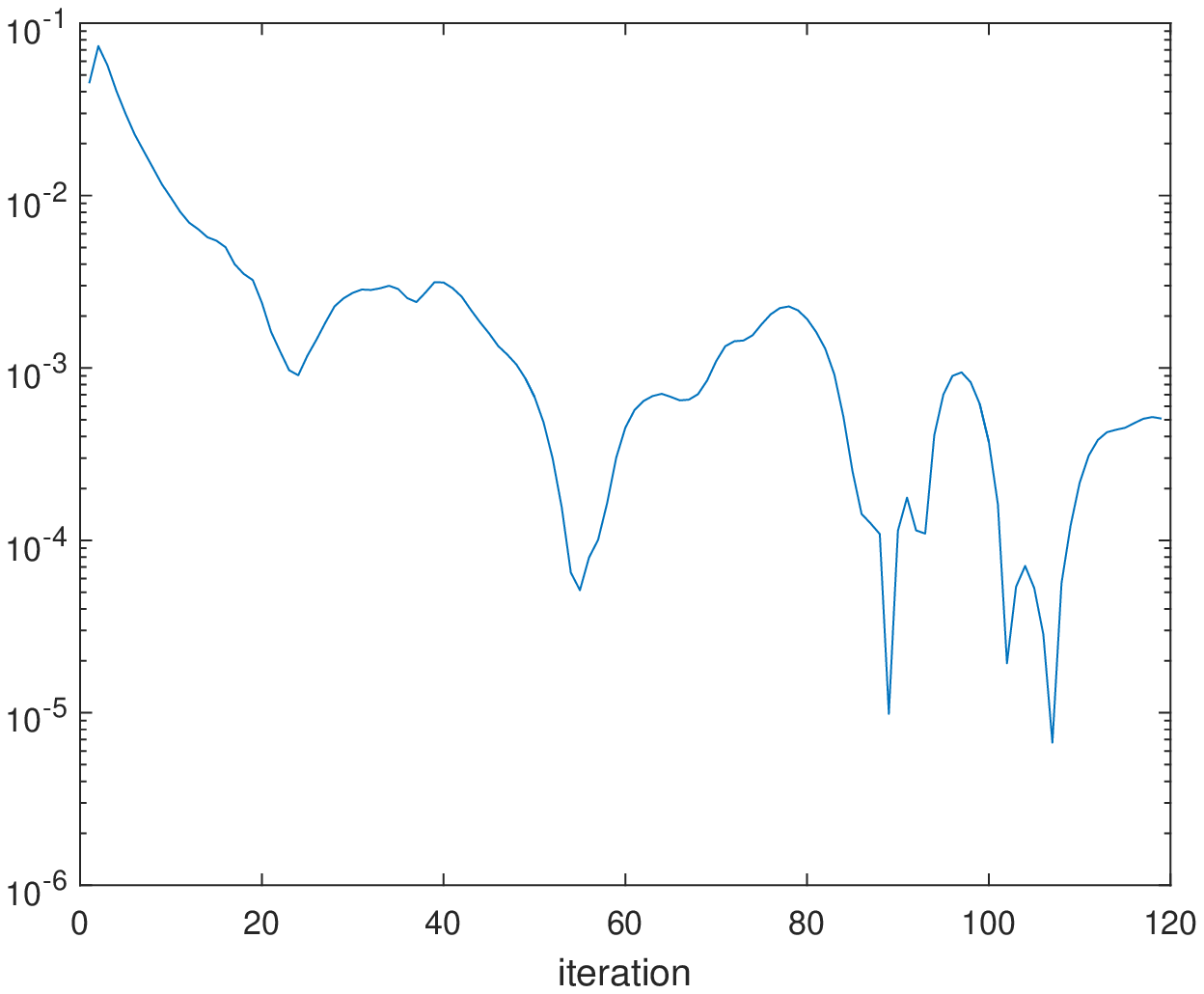}\\
benzene\\
\includegraphics[width=0.45\textwidth]{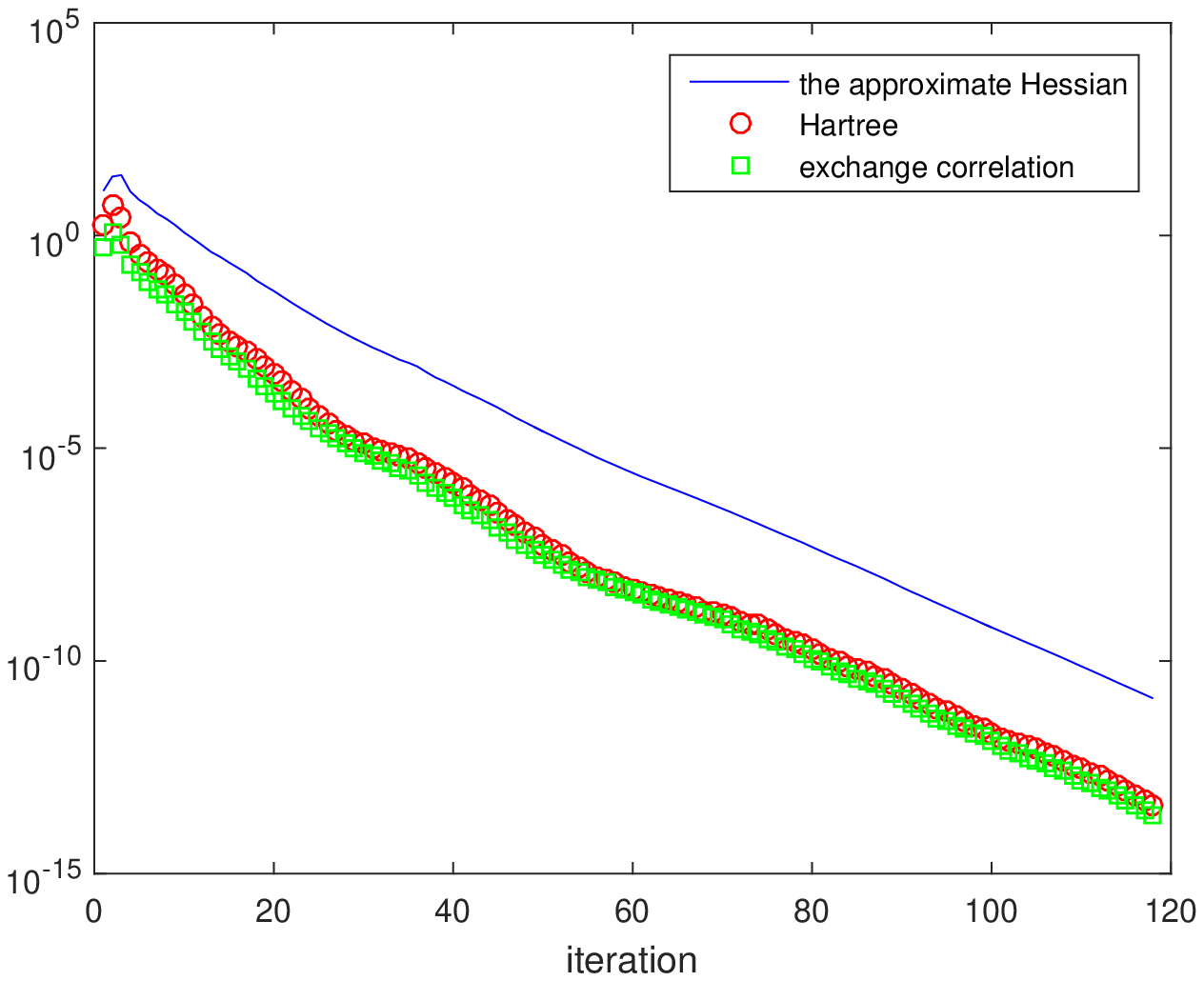}
\includegraphics[width=0.45\textwidth]{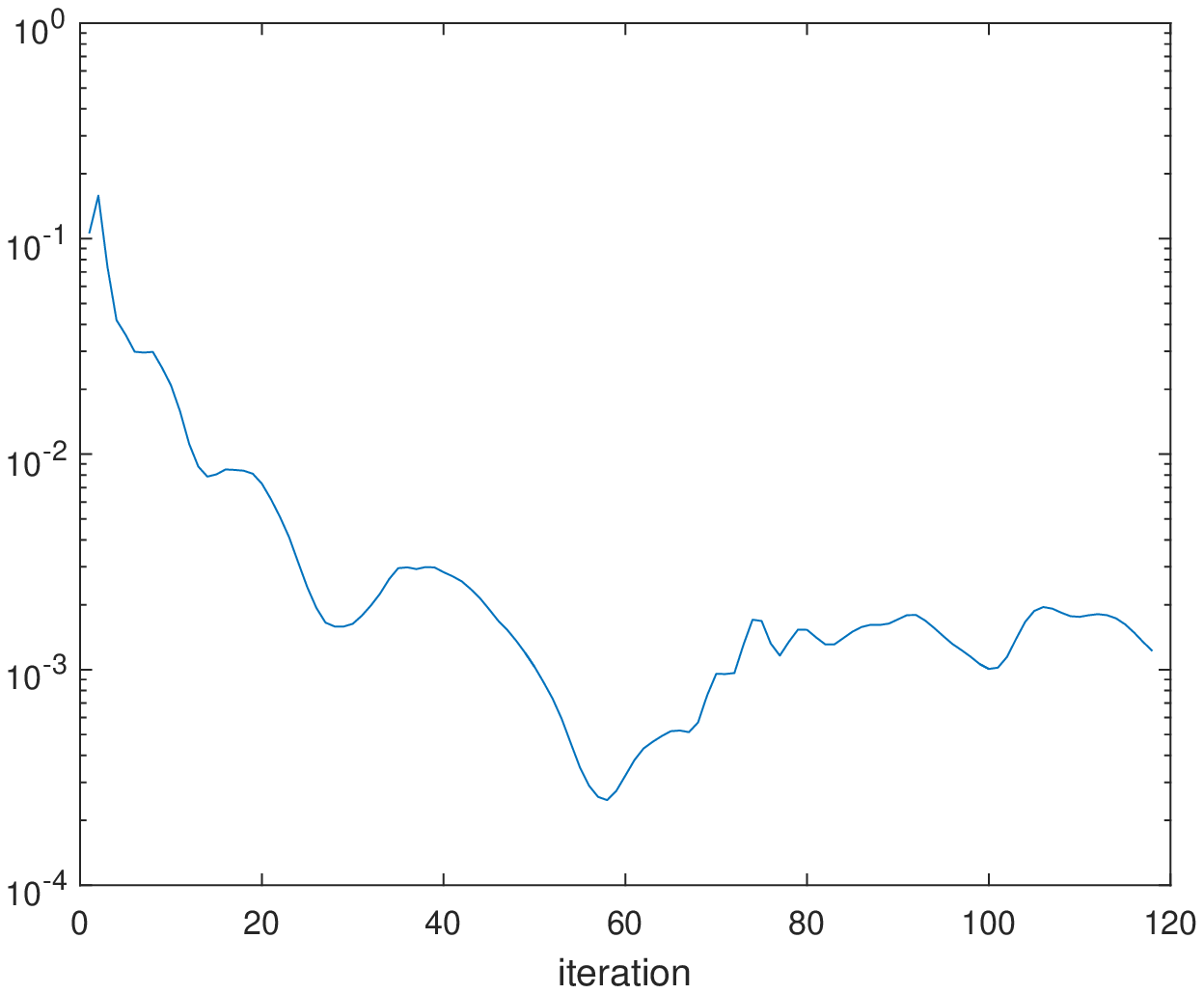} \\
aspirin\\
\includegraphics[width=0.45\textwidth]{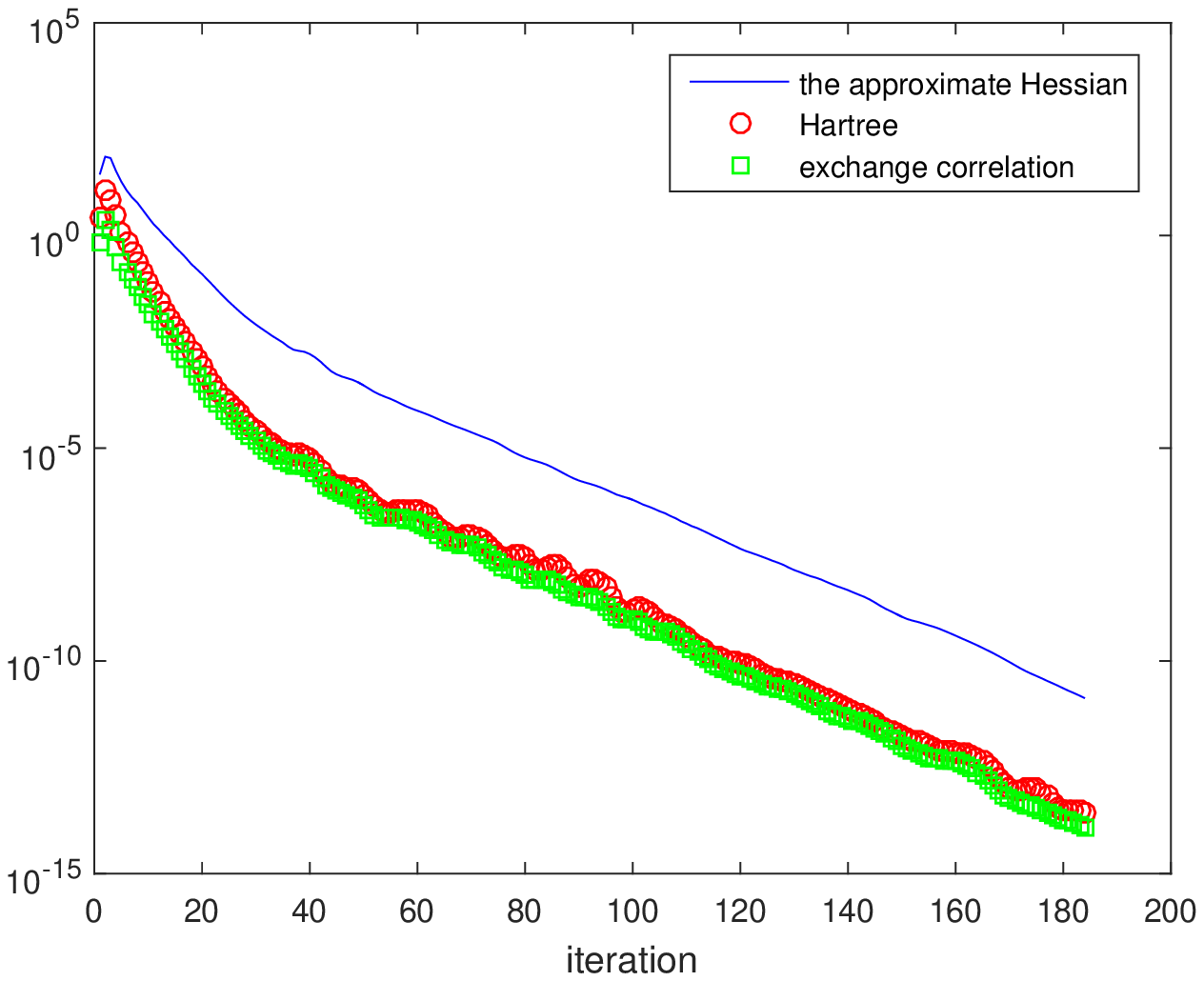}
\includegraphics[width=0.45\textwidth]{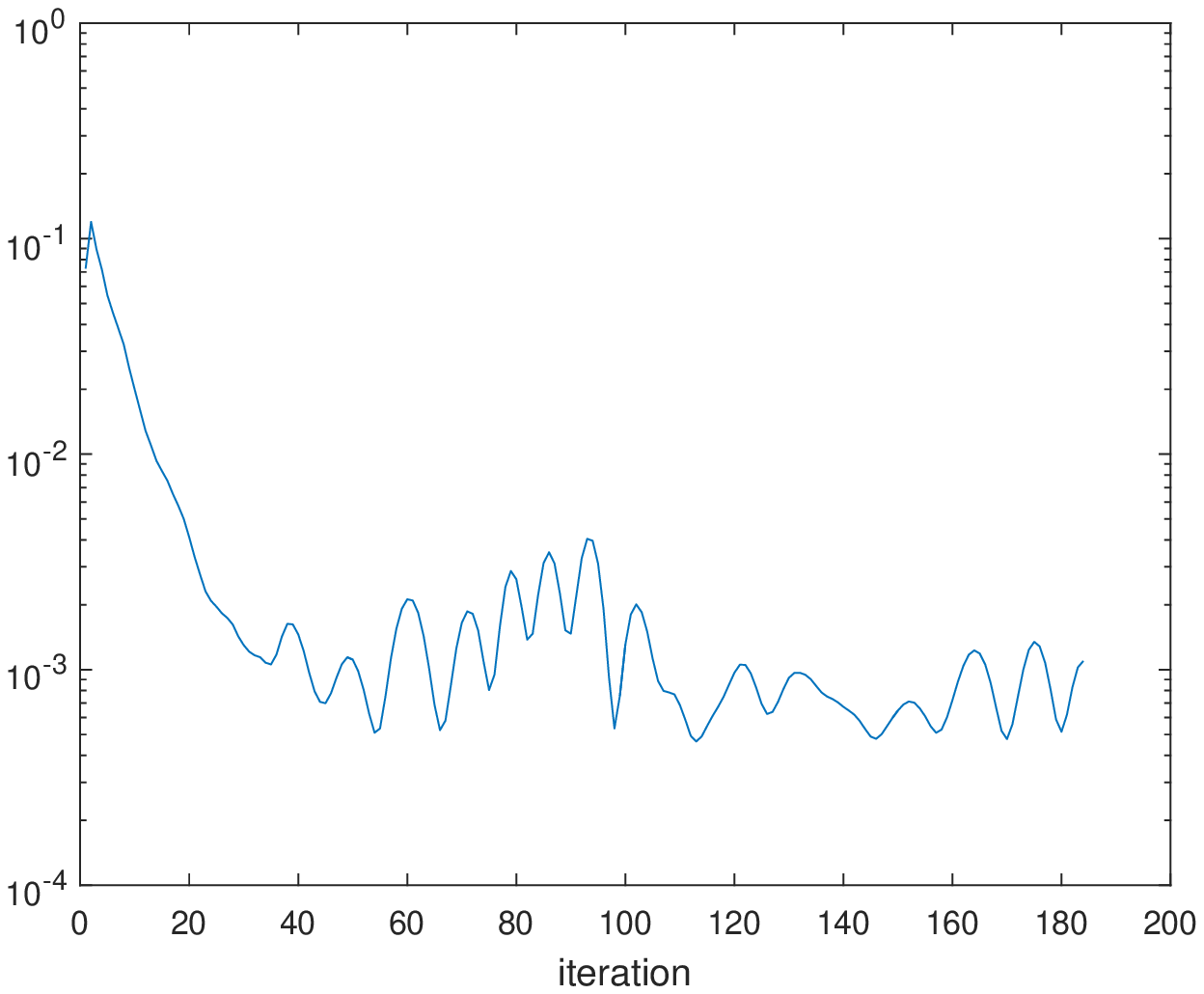} \\
$C_{60}$
\end{figure}
\begin{figure}
\centering
\includegraphics[width=0.45\textwidth]{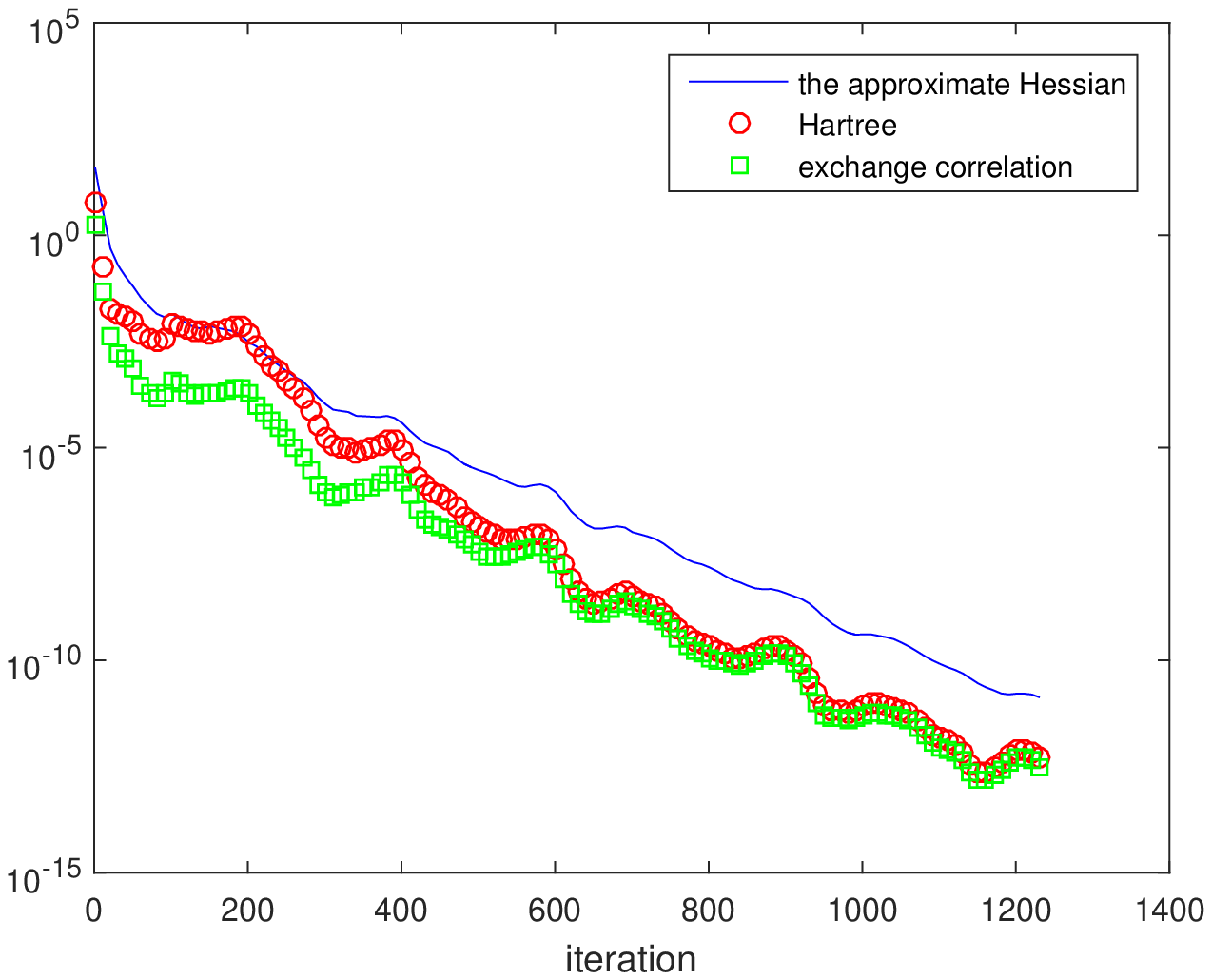}
\includegraphics[width=0.45\textwidth]{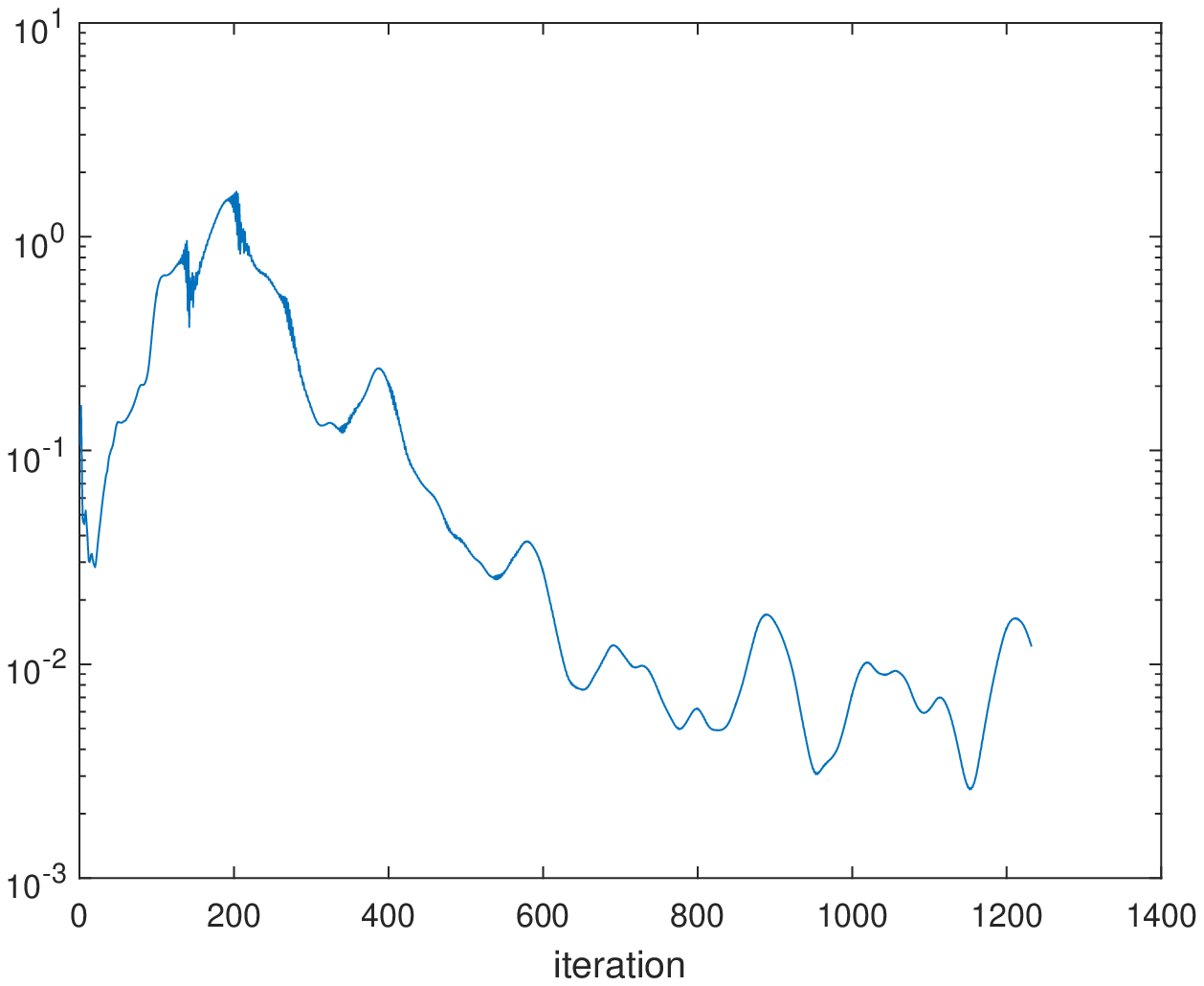} \\
alanine chain \\
\includegraphics[width=0.45\textwidth]{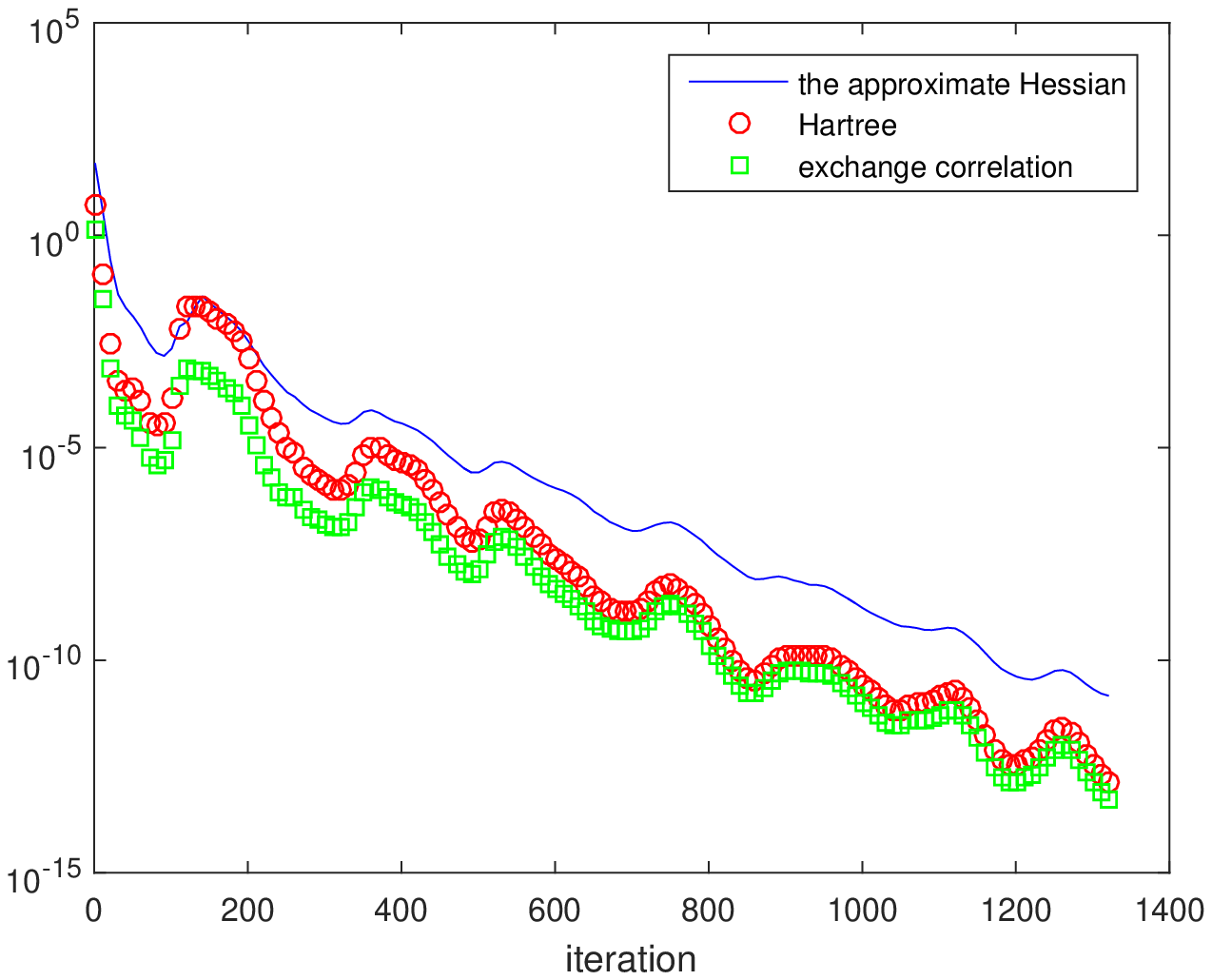}
\includegraphics[width=0.45\textwidth]{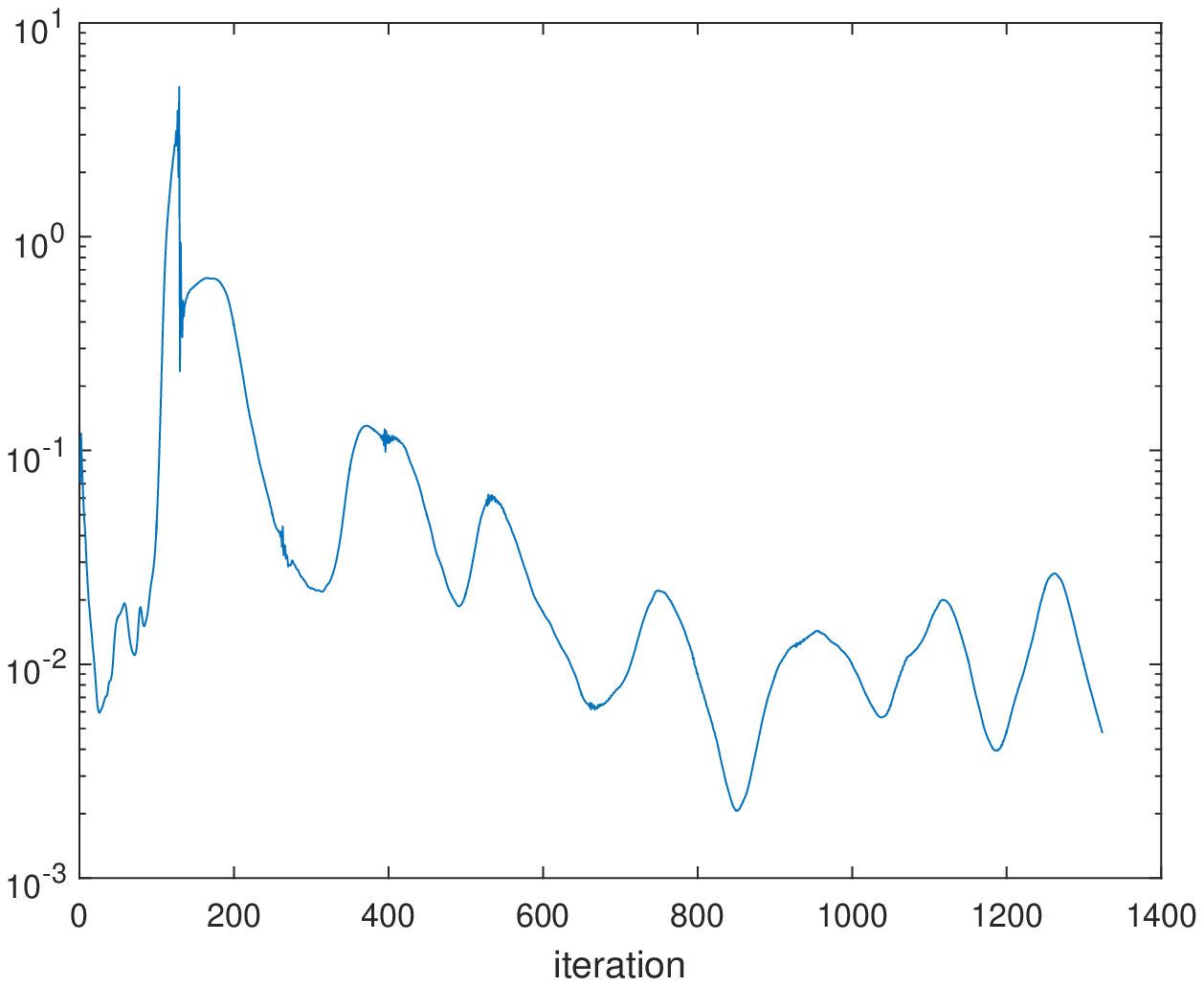} \\
$C_{120}$\\
\includegraphics[width=0.45\textwidth]{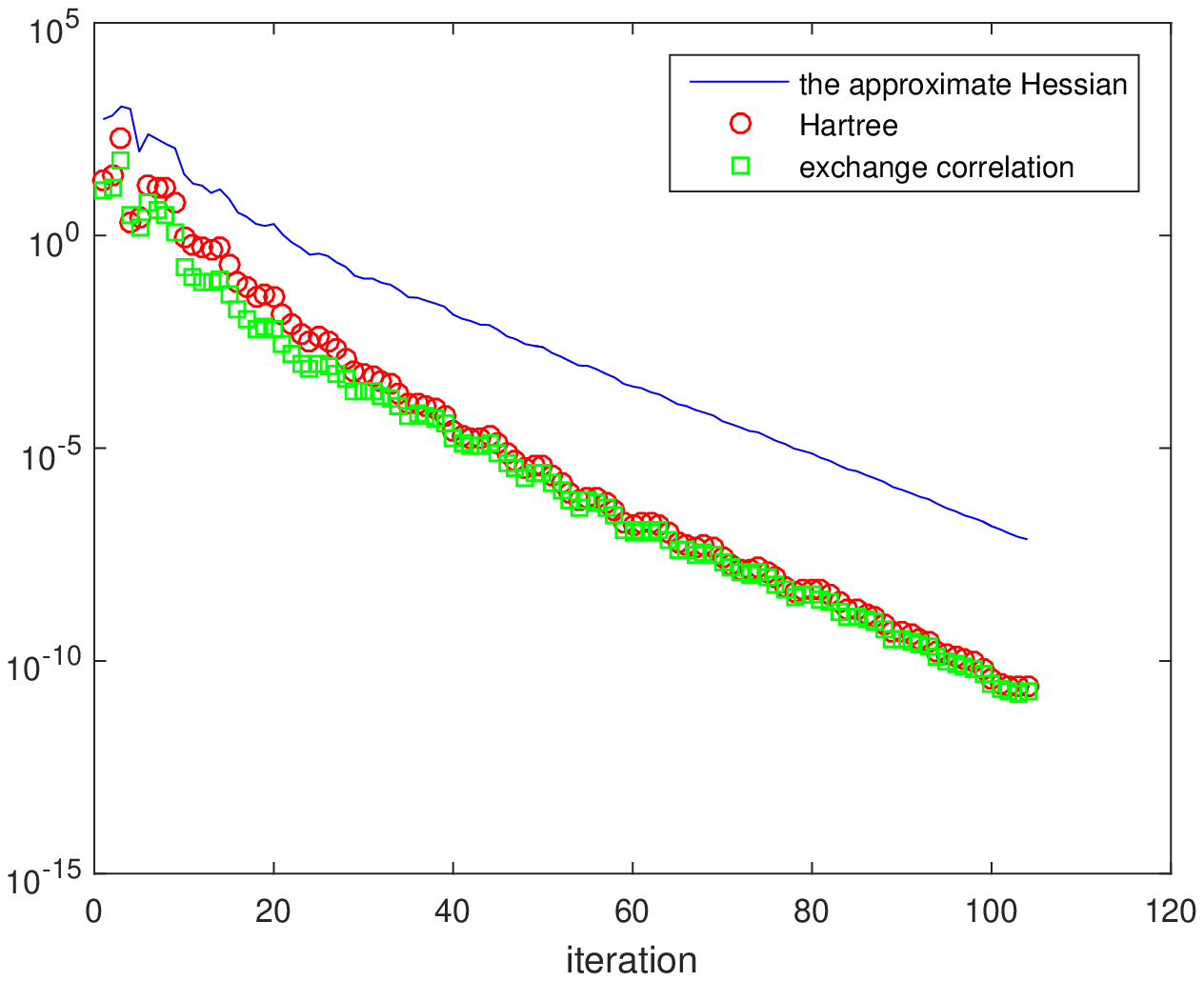}
\includegraphics[width=0.45\textwidth]{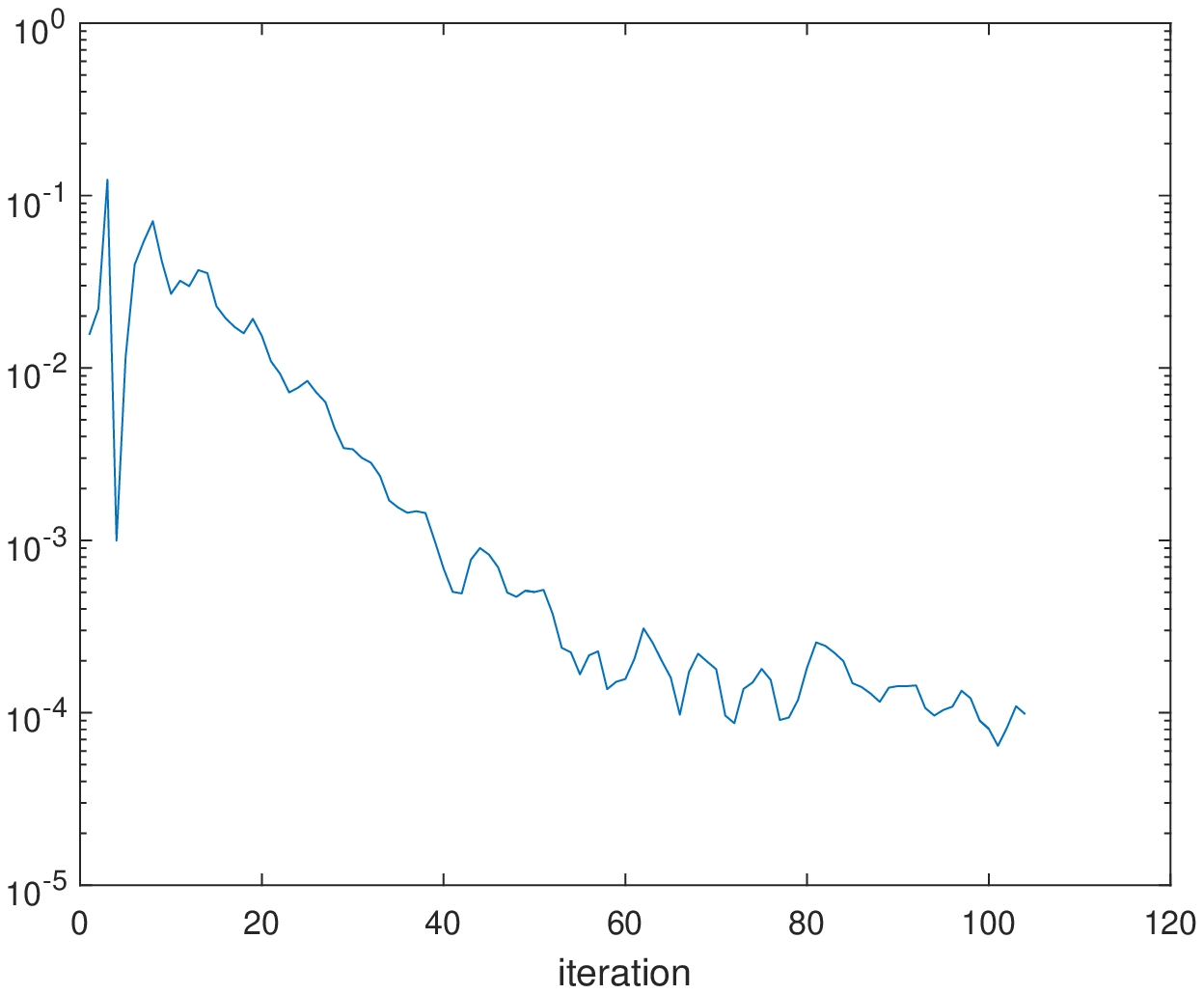} \\
$C_{1015}H_{460}$\\
\includegraphics[width=0.45\textwidth]{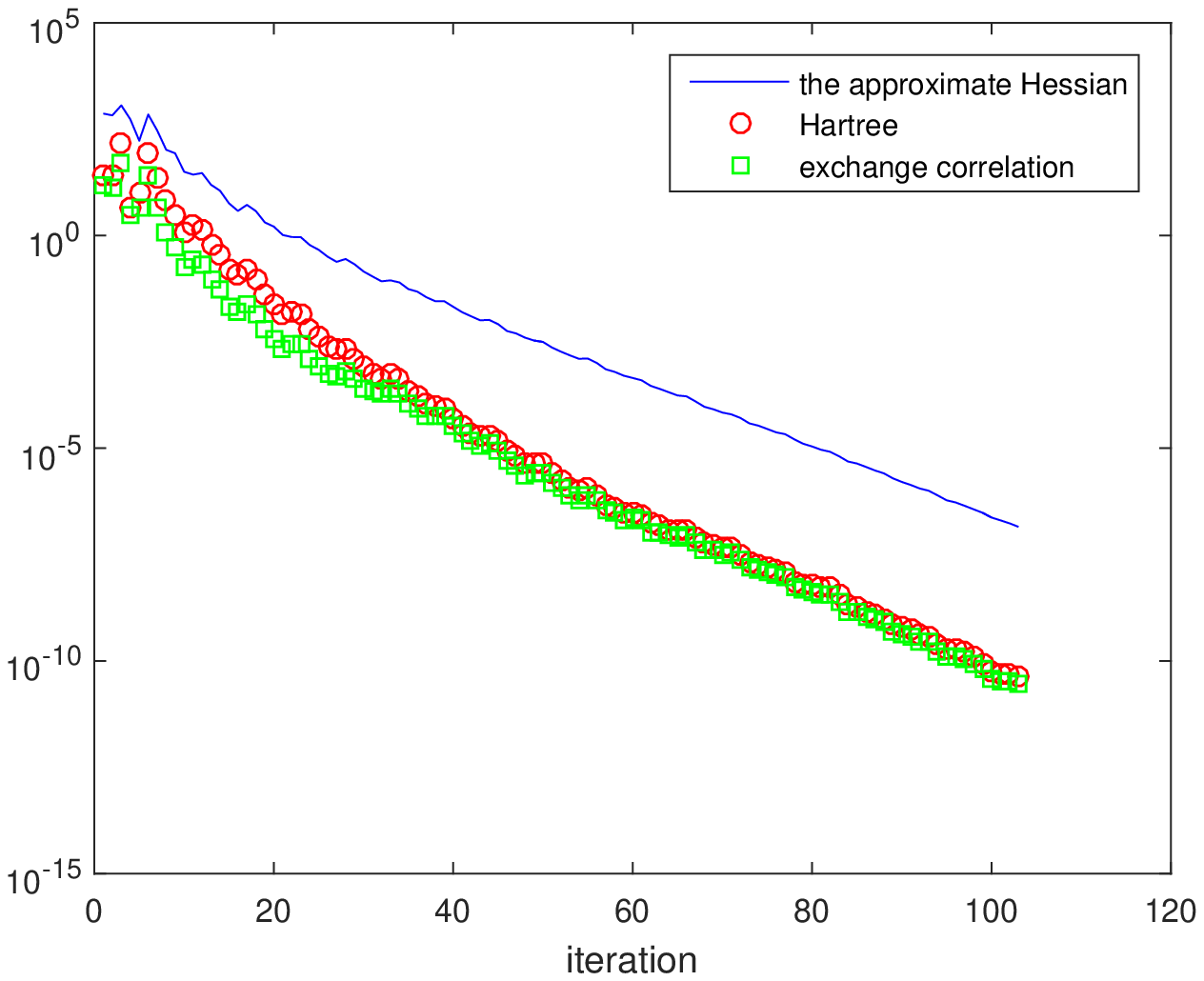}
\includegraphics[width=0.45\textwidth]{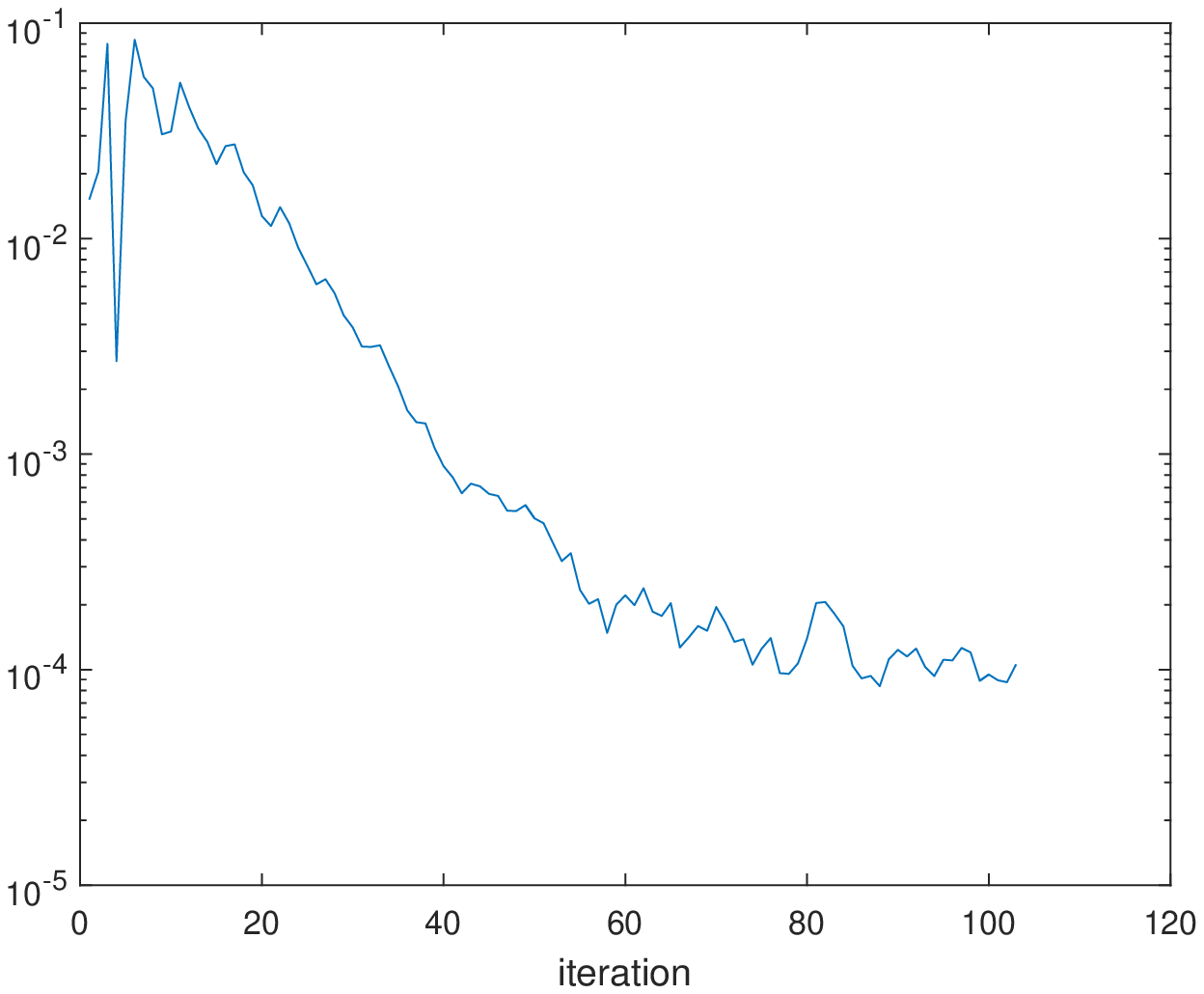} \\
$C_{1419}H_{556}$
\end{figure}


\end{document}